%
%
%


\documentclass{gen-j-l}



\usepackage{amsmath}
\usepackage{amsfonts}
\usepackage{amssymb}
\usepackage{mathtools}
\usepackage{enumitem}
\usepackage{esint}
\usepackage{hyperref}

\newtheorem{theorem}{Theorem}[section]
\newtheorem*{theorem*}{Theorem}
\newtheorem{corollary}[theorem]{Corollary}
\newtheorem{lemma}[theorem]{Lemma}
\newtheorem{prop}[theorem]{Proposition}

\theoremstyle{definition}
\newtheorem{definition}[theorem]{Definition}
\newtheorem{example}[theorem]{Example}

\theoremstyle{remark}
\newtheorem{remark}[theorem]{Remark}

\def \R {\mathbb{R}}
\def \E {\mathbb{E}}
\def \P {\mathbb{P}}
\def \A {\mathcal{A}}
\def \U {\mathcal{U}}
\def \N {\mathbb{N}}

\def \comp {\circ}
\def \sbs {\subseteq}
\def \sps {\supseteq}
\def \cross {\times}
\def \res {\big|_}
\def \eps {\epsilon}

\def \eps {\epsilon}

\def \op {\text{op}}

\newcommand{\Lipo}[1] {\text{Lip}_0\left( #1 \right)}
\newcommand{\Lip}[1] {\text{Lip}\left( #1 \right)}

\newcommand{\dev}[1] {\text{dev}\left( #1 \right)}
\newcommand{\height}[1] {\text{ht}\left( #1 \right)}

\def \sbs {\subseteq}
\def \sps {\supseteq}

\usepackage{graphicx}



\numberwithin{equation}{section}

\begin{document}

\title{Thick Families of Geodesics and Differentiation}


\author{Chris Gartland}
\thanks{Thanks to Jeremy Tyson and Mikhail Ostrovskii for helpful comments in the preparation of this article.}





\begin{abstract}
The differentiation theory of Lipschitz functions taking values in a Banach space with the Radon-Nikod\'ym property (RNP), originally developed by Cheeger-Kleiner, has proven to be a powerful tool to prove non-biLipschitz embeddability of metric spaces into these Banach spaces. Important examples of metric spaces to which this theory applies include nonabelian Carnot groups and Laakso spaces. In search of a metric characterization of the RNP, Ostrovskii found another class of spaces that do not biLipschitz embed into RNP spaces, namely spaces containing thick families of geodesics. Our first result is that any metric space containing a thick family of geodesics also contains a subset and a probability measure on that subset which satisfies a weakened form of RNP Lipschitz differentiability. A corollary is a new nonembeddability result. Our second main result is that, if the metric space is a nonRNP Banach space, a subset consisting of a thick family of geodesics can be constructed to satisfy true RNP differentiability. An intriguing question is whether this differentiation criterion, or some weakened form of it such as the one we prove in the first result, actually characterizes general metric spaces non-biLipschitz embeddable into RNP Banach spaces.
\end{abstract}

\maketitle


\tableofcontents

\section{Introduction}
\subsection{Historical Background}
\label{ss:histback}
In \cite{Ch99}, Cheeger introduced the notion of a Lipschitz differentiable structure on a metric measure space and proved that every doubling space satisfying a Poincar\'e inequality (henceforth PI space) admits one. Such a structure allows one to differentiate real-valued Lipschitz functions almost everywhere with respect to an atlas of $\R^k$-valued Lipschitz functions ($k$ can vary from chart to chart). We'll call metric measure spaces admitting this structure \emph{Lipschitz differentiability spaces}. Cheeger notes in Theorem 14.3 of \cite{Ch99} that Lipschitz differentiability spaces which biLipschitz embed into a finite dimensional Euclidean space must have every tangent cone (see Section \ref{sec:prelims} for background on tangent cones) at almost every point be biLipschitz equivalent to $\R^k$ (the same $\R^k$ that the chart containing the point maps to) and that nonabelian Carnot groups and Laakso spaces violate this condition. In \cite{CK09}, Cheeger and Kleiner generalize the result from \cite{Ch99} and prove that PI spaces admit RNP Lipschitz differentiable structures, which generalize Lipschitz differentiable structures in the sense that every Lipschitz function taking values in any Banach space with the RNP is differentiable almost everywhere. (A Banach space $B$ has the RNP if any Lipschitz map $\R \to B$ is differentiable Lebesgue-almost everywhere, or, equivalently, if for every probability space $\Omega$ and for every martingale $M_n: \Omega \to B$, if $\sup_n \|M_n\|_{L^\infty(\Omega;B)} < \infty$, then $M_n$ converges almost surely. For reference on RNP spaces, see Chapter 2 of \cite{Pi16}, specifically Theorem 2.9 and Remark 2.17.) Such metric measure spaces will be referred to as \emph{RNP Lipschitz differentiability spaces}. In Theorem 1.6 of \cite{CK09}, the authors note that, again, any such metric measure space biLipschitz embedding into an RNP space must have every tangent cone at almost every point be biLipschitz equivalent to $\R^k$. Thus, nonabelian Carnot groups and Laakso spaces do not biLipschitz embed even into any RNP space. For a metric measure space, we call the phenomenon of admitting an RNP differentiable structure and violating the condition the every tangent cone at almost every point is biLipschitz equivalent to $\R^k$ the \emph{differentiation nonembeddability criterion} into RNP spaces (occasionally, we will also use the terms \emph{true} RNP differentiable structure or \emph{true} differentiation nonembeddability criterion to distinguish them from a weaker version we introduce in Theorem \ref{thm:mainthmssummary}).

It's been known since at least 1973 that Lipschitz maps from separable Banach spaces to RNP spaces are, in a suitable sense, differentiable almost everywhere. This is due independently to Aronszajn \cite{Ar76}, Christensen \cite{Chr73}, and Mankiewicz \cite{Ma73} (see section 6.6 of \cite{BL00}). It follows that the RNP is inherited under biLipschitz embeddability of Banach spaces, since it is inherited under linear-biLipschitz embeddability. It is then natural to ask for a purely metric characterization of the RNP - one that does not rely on the linear structure. This question was asked by Bill Johnson in 2009 and answered in 2014 by Ostrovksii (see \cite{Os14b}) with the following theorem:

\begin{theorem}[Corollary 1.5 \cite{Os14b}] \label{thm:ostrovskii1}
A Banach space does not have the RNP if and only if it contains a biLipschitz copy of a metric space containing a thick family of geodesics.
\end{theorem}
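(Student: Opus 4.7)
The plan is to prove both directions via the martingale characterization of the RNP stated in the excerpt: $B$ has the RNP if and only if every bounded $B$-valued martingale converges almost surely.

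For the forward direction (\emph{thick family implies failure of the RNP}), suppose $B$ contains a biLipschitz copy $f(M) \sbs B$ of a metric space $M$ with a thick family $T$ of $uv$-geodesics having thickness constant $\alpha > 0$. I would produce a bounded non-convergent $B$-valued martingale $(X_n)$ on the dyadic filtration of $[0,1]$ by building geodesics $(g_n)_{n \geq 0} \sbs T$ recursively: at stage $n$, sample $g_n$ at the $2^n$ dyadic midpoints, then invoke thickness with sufficiently small tolerance $\epsilon_n$ to obtain $g_{n+1} \in T$ passing within $\epsilon_n$ of each sample point while producing aggregate detour of size at least $\alpha$. Define $X_n(t) = f(p_n(t))$, where $p_n$ is piecewise-constant on the dyadic intervals of length $2^{-n}$ and chosen coherently with the sampling so that $X_n$ is, up to an error $\ll \epsilon_n$, the conditional expectation of $X_{n+1}$. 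Uniform boundedness of $(X_n)$ follows from all $g \in T$ sharing endpoints; non-convergence follows because thickness, combined with the biLipschitz constant of $f$, produces a uniform $L^1$-lower bound of order $\alpha$ on $\|X_{n+1} - X_n\|$, violating the Cauchy criterion.

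For the reverse direction, suppose $B$ lacks the RNP, so some bounded $B$-valued martingale $(Y_n)$ on a dyadic probability space fails to converge. After passing to a subsequence and normalizing, assume a uniform lower bound $\|Y_{n+1} - Y_n\|_{L^1} \geq \alpha > 0$ along with $\|Y_n\|_{\infty} \leq C$. Form $M \sbs B$ as the union of arc-length-parameterized piecewise linear paths in $B$ obtained by concatenating martingale increments along each infinite branch of the dyadic tree, connecting the common starting value $Y_0$ to the limit point of each branch (which exists along almost every branch after a further refinement absorbing non-summable tails). Equip $M$ with the path-length metric along these branches; the inclusion $M \hookrightarrow B$ should then be biLipschitz, with one direction trivial and the other following from comparability of arc length to chord length on dyadic segments, guaranteed by the uniform $L^1$ lower bound and a convexity argument on the tree. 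The branch paths form a thick family of $Y_0$-to-endpoint geodesics because, given any finite set of points on a branch $g$, splitting off from $g$ at sufficiently deep level keeps the new branch $\epsilon$-close to the chosen points but still accumulates total detour at least a fixed multiple of $\alpha$.

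The principal obstacle lies in the reverse direction. A non-convergent bounded martingale is inherently an $L^1$-object furnishing only averaged, not pointwise, information, whereas thickness and the biLipschitz requirement demand pointwise metric control along every individual branch. One must rule out pathological branches on which arc length blows up or collapses relative to chord distance in $B$, and one must reconcile the endpoint structure (geodesics between fixed $u$ and $v$) with the branching endpoints coming from martingale limits. The standard workaround is to extract, from the dyadic tree underlying $(Y_n)$, a combinatorial sub-model such as an infinite diamond or Laakso-type graph whose canonical metric realization is known to embed biLipschitzly into any non-RNP space, and then read off the required thick family of $uv$-geodesics directly from that combinatorial structure.
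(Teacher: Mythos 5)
First, a point of orientation: the paper does not actually prove this theorem. It is cited directly from Ostrovskii \cite{Os14b}. What the paper does is (a) reprove the forward implication (thick family $\Rightarrow$ non-RNP) as Corollary~\ref{cor:RNPapplication}, via the weak differentiation machinery of Sections~\ref{sec:axioms}--\ref{sec:weakRNPdiff}, and (b) strengthen the reverse implication in Theorem~\ref{thm:gendiamembed}, Section~\ref{sec:invlimgraphsnonRNP}, where a biLipschitz copy of an inverse limit of admissible graphs (which in particular carries a thick family of geodesics) is built inside any non-RNP space. So there is no single ``paper's proof'' to compare against, but the tools the paper uses are visible and they differ from yours in a specific and instructive way.

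Your forward direction has a real gap. You set $X_n(t) = f(p_n(t))$ with $p_n$ piecewise-constant along a geodesic, and assert that by choosing the detours carefully $X_n$ is ``up to an error $\ll \epsilon_n$ the conditional expectation of $X_{n+1}$.'' Thickness does not provide this: it lets you concatenate in a detouring geodesic that agrees with the old one at prescribed points, but nothing forces the \emph{average} of the new values over a dyadic interval to return to the old value. A geodesic detour is not a mean-zero perturbation, and $f$ is only biLipschitz, not affine, so even an averaging structure in $M$ would not survive into $B$. Without a genuine (or provably summably-approximate) martingale you cannot conclude non-convergence. The paper's replacement for this step is entirely different: it builds a differentiation theory on the inverse limit $X_\infty$, proves RNP-valued Lipschitz maps are differentiable along natural scales (Theorem~\ref{thm:weakRNPdiff}, whose proof does use martingale convergence, but for the carefully constructed sequence $\E_i^\infty(h)'$), and then reads off nonembeddability from the incompatibility of biLipschitz blowups with the circle in the tangent cone (Theorem~\ref{thm:Sinfty}). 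Martingales enter, but only after a lot of structure has been built; they are never applied directly to $f\circ p_n$.

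Your reverse direction you yourself flag as incomplete, and the diagnosis is correct: a non-convergent bounded martingale is an $L^1$ object, and no amount of post-processing turns averaged increment bounds into the pointwise control needed to make each branch a geodesic and the embedding biLipschitz. The fix that the paper (following Ostrovskii) uses is to abandon the martingale formulation entirely in favor of the equivalent non-dentability characterization of failure of RNP, stated at the start of Section~\ref{sec:invlimgraphsnonRNP}: there exist $\delta > 0$ and an open convex $C$ in the unit ball of $B$ with $c \in \text{co}(C \setminus B_{4\delta}(c))$ for every $c \in C$. This is a \emph{pointwise, constructive} statement: any point can be expressed as a convex combination of points uniformly far away, and (after perturbing to dyadic coefficients and splitting terms) with equal weights $2^{-n_c}$. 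That is exactly the local recipe needed to build a detour, i.e., to replace an edge with the model graph $\Gamma(c)$ of Section~\ref{sss:modelgraph}. Two further non-negotiable devices appear there that your sketch omits: the construction lives in $B \oplus_\infty \R$, with each edge parallel to some $(c,1)$, so that arc-length parametrization automatically yields a \emph{geodesic} (the $\R$-coordinate advances at unit speed and dominates in the $\ell^\infty$-sum); and the subdivision scale $2^{-N}$ is chosen small relative to $\delta_i - \delta'$ (the inequality~\eqref{eq:largesubdiv}) to keep the quasiconvexity constant bounded below, which is what makes the inclusion biLipschitz. Your ``arc length comparable to chord length'' assertion is precisely the quasiconvexity that this scale choice delivers; it does not come for free from a uniform $L^1$ increment bound on a martingale.

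In short: the equivalence of RNP with the martingale criterion is correct, but it is the wrong entry point here. Both directions require pointwise geometric control, which the martingale formulation does not supply. The forward direction needs either Ostrovskii's original argument or the differentiation-plus-tangent-cone argument of this paper; the reverse direction needs the non-dentable convex set, the dyadic splitting of convex combinations, and the $B \oplus_\infty \R$ trick with scale control.
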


\noindent In particular, a new criterion for non-biLipschitz embeddability of metric spaces into RNP Banach spaces was discovered. Ostrovskii went on to give a simple proof that Laakso spaces contain thick families of geodesics. This turned out to be a much shorter and more natural way to prove their nonembeddability into RNP spaces compared to the differentiation nonembeddability criterion.

On the other hand, according to another intriguing result of Ostrovskii, no Carnot group can contain a thick family of geodesics. This is due to Li's proof (section 7.1 of \cite{Li14}) of the nontrivial Markov convexity of Carnot groups, the fact that Markov convexity is inherited under biLipschitz embeddings, and the following result of Ostrovskii (see Definition \ref{def:thickfamily}) :

\begin{theorem}[Theorem 1.5 of \cite{Os14c}] \label{thm:ostrovskii2}
Metric spaces containing a thick family of geodesics have no nontrivial Markov convexity.
\end{theorem}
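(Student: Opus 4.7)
The plan is to reduce Theorem \ref{thm:ostrovskii2} to the known result of Mendel and Naor that the Markov $p$-convexity constants $\Pi_p(L_n)$ of the $n$-th Laakso graph $L_n$ satisfy $\Pi_p(L_n) \to \infty$ as $n \to \infty$ for every finite $p$. Since Markov $p$-convexity constants are monotone under biLipschitz embeddings up to a multiplicative distortion factor, it suffices to exhibit, for every $n \in \N$ and every $\eps > 0$, a $(1+\eps)$-biLipschitz copy of $L_n$ inside any space containing a thick family of geodesics. The diamond/Laakso combinatorics are exactly what the thickness axiom generates: a bifurcation on a subarc followed by rejoining at the subarc's endpoints is an embedded 4-cycle, and nested bifurcations produce nested 4-cycles.

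Assuming the thickness axiom provides, for every geodesic $g$ in the thick family $F$ and every subarc $A \subset g$, another $g' \in F$ that coincides with $g$ off $A$ and attains deviation at least $\alpha\,\ell(A)$ somewhere on $A$ (where $\alpha > 0$ is the thickness parameter), I would build the embedding of $L_n$ recursively. At stage $0$, take a single $g_0 \in F$ as the base "edge"; at each inductive stage, for every current edge corresponding to a subarc $A$ of some geodesic already constructed, apply thickness to $A$ to produce a sibling geodesic that bifurcates on $A$. The vertex set of the embedded $L_n$ is made up of the endpoints of these dyadically nested subarcs together with the points of maximal deviation witnessed at each stage; the edges of $L_n$ correspond to the pieces of geodesics connecting consecutive vertices.

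The main obstacle is simultaneously controlling distortion at every scale. Two estimates are needed: first, a lower bound showing that sibling bifurcation points produced at stage $k$ lie at distance comparable to $\alpha\, 2^{-k} d(u,v)$, which comes directly from the thickness inequality; second, an upper bound showing that distances between vertices in disjoint cells reproduce the graph distances of $L_n$ up to a factor $(1+\eps)$, which relies crucially on the "agreement off $A$" clause of the thickness axiom so that small local deviations at deeper stages do not accumulate across disjoint branches. The subtlety is that after choosing a deviation on $A$ at some level, the new geodesic $g'$ is itself a member of $F$ and one must invoke the thickness axiom on \emph{sub}arcs of $g'$; verifying that these further deviations remain confined to the prescribed subarcs (so that the graph combinatorics match $L_n$ exactly) is the bookkeeping core of the proof. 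Once this near-isometric embedding of $L_n$ is secured, monotonicity of $\Pi_p$ combined with $\Pi_p(L_n) \to \infty$ forces $\Pi_p(X) = \infty$ for every $p$, which is the desired conclusion.
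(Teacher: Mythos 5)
This theorem is cited from Ostrovskii; the present paper does not prove it, so there is no in-paper argument to compare against. Your proposed proof, however, rests on a reformulation of the thickness axiom that is not valid. You assume that for each geodesic $g$ in the family and each subarc $A$ of its domain, thickness furnishes a sibling $g'$ agreeing with $g$ off $A$ and deviating from $g$ by at least $\alpha\,|A|$ somewhere in $A$; from such local, scale-invariant thickness the recursive embedding of $L_n$ would indeed follow. But Definition \ref{def:thickfamily} is genuinely global: for any finite partition $\{t_i\}$ of the domain it supplies a curve $\tilde\gamma$ agreeing with $\gamma$ at the $t_i$ (and possibly more points) whose \emph{total} deviation $\sum_i d(\gamma(s_i),\tilde\gamma(s_i))$ is at least the fixed constant $\alpha$, and $\alpha$ does not rescale with the mesh. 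The deviation within any particular subinterval $[t_{i-1},t_i]$ can be zero. Moreover, since both curves are geodesics agreeing at $t_{i-1}$ and $t_i$, the deviation at any $s\in[t_{i-1},t_i]$ is bounded by $t_i-t_{i-1}$, so once your dyadic subarcs have length well below $\alpha$ the bulk of the required total deviation is forced to land outside the one subarc you are targeting. Gluing $\tilde\gamma$ back to $\gamma$ off $A$ by concatenation-closedness recovers the "agreement off $A$" clause but sacrifices the lower bound: the glued curve may deviate arbitrarily little inside $A$. The key step in your recursion --- a bifurcation of size comparable to $\alpha\, 2^{-k}$ confined to each stage-$k$ subarc --- therefore has no support.

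This is not a bookkeeping issue to be patched; it is exactly why the paper's own construction from a thick family (Theorem \ref{thm:existinvsys}) does not produce Laakso graphs. There, thickness yields bifurcations (circles) of height at least $\alpha|e'|$ only on a \emph{subset} of edges whose total length along each directed $0$-$1$ path is at least $\beta$ (Axioms \ref{ax:thickcircht} and \ref{ax:thickcircsetlength}), not on every edge, and the edge lengths must be chosen to decay at an increasingly rapid rate. The resulting inverse limit need not even be quasiconvex, in sharp contrast to a Laakso space. Any correct proof must work with this weaker, non-self-similar structure; in particular, one expects to construct a Markov chain directly inside the thick family of geodesics and show the Markov $p$-convexity inequality fails for it, rather than to reduce to a near-isometric embedding of the Laakso sequence.
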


\noindent So although containing a thick family of geodesics is a necessary condition for the non-biLipschitz embeddability of Banach spaces into RNP Banach spaces, the same is not true of general metric spaces, even geodesic metric spaces such as Carnot groups. Our motivation for this article is to study the relationship between these two criteria for non-biLipschitz embeddability into RNP spaces, and, more generally, to find a characterization of metric spaces that do not embed into RNP spaces. To this end, we prove two results: the first is that any metric space containing a thick family of geodesics also contains a subset and a probability measure on that subset which satisfies a weakened form of the differentiation nonembeddability criterion (see Theorem \ref{thm:mainthmssummary}). Our second result, Theorem \ref{thm:gendiamembed}, is that any nonRNP Banach space contains a biLipschitz copy of a metric measure space satisfying the true differentiation nonembeddability criterion.

\subsection{Summary of Results and Discussion of Proof Methods}
\subsubsection{Summary of Results}
The type of differentiable structure we construct is weaker than the true RNP differentiable structure because the almost everywhere approximation of RNP-valued Lipschitz functions by their derivative only holds on some sequence of scales tending to 0 instead of all scales. More specifically, we prove Theorems \ref{thm:Sinfty} and \ref{thm:weakRNPdiff}, which can be summarized as:

\begin{theorem}[Summary of Theorems \ref{thm:Sinfty} and \ref{thm:weakRNPdiff}] \label{thm:mainthmssummary}
For any complete metric space $M$ containing a thick family of geodesics, there exist a compact subset $X_\infty$, Borel probability measure $\mu_\infty$ on $X_\infty$, Lipschitz map $\pi: X_\infty \to [0,1]$, Borel subset $S_\infty \sbs X_\infty$, a sequence of scales $r_i(x) \searrow 0$ for almost every $x \in X_\infty$, and a nonprincipal ultrafilter $\U(x)$ on $\N$ for each $x \in S_\infty$ such that:
\begin{itemize}
\item[\ref{thm:Sinfty}] $\mu_\infty(S_\infty) > 0$, and for every $x \in S_\infty$ the tangent cone $T_x^{r_i(x),\U(x)}X_\infty$ does not topologically embed into $\R$.
\item[\ref{thm:weakRNPdiff}] For every RNP space $B$ and Lipschitz map $f: X_\infty \to B$, for $\mu_\infty$-almost every $x \in X_\infty$, $f$ is differentiable at $x$ with respect to $\pi$ along the sequence of scales $(r_i(x))_{i=0}^\infty$.
\end{itemize}
\end{theorem}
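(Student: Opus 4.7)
The plan is to carry out a Laakso-type inductive construction driven by the thickness hypothesis. Starting from the unit-speed geodesics in the thick family, I would build a nested sequence of finite ``graph approximations'' $X_n \sbs M$, each a union of geodesic arcs, together with $1$-Lipschitz parametrization maps $\pi_n : X_n \to [0,1]$. Thickness supplies, at each stage $n$ and at each pre-chosen finite set of parameters, a detour geodesic agreeing with the current geodesic on that set but taking a controlled alternative route on some parameter sub-interval; I would insert these detours at a geometric scale $r_n \searrow 0$ and keep the detour diameters summable so that $X_\infty := \closure{\bigcup_n X_n}$ is compact in $M$. The parametrizations pass to a single Lipschitz $\pi : X_\infty \to [0,1]$, and the construction scales $r_n$ serve as the scales $r_i(x)$ along which differentiation will take place.

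For the measure and the differentiation conclusion, I would equip each $X_n$ with the probability measure $\mu_n$ that is uniform on each arc and weighted so as to push forward to Lebesgue measure on $[0,1]$ under $\pi_n$; then let $\mu_\infty$ be a weak-$*$ limit. The preimages under $\pi$ of the dyadic cells at depth $n$ in $[0,1]$, subdivided further by which branch is taken at each detour up to stage $n$, generate a filtration $\F_n$ on $X_\infty$. For any RNP-valued Lipschitz map $f : X_\infty \to B$, the $B$-valued martingale formed by conditional expectations against $\F_n$ of the scale-$r_n$ finite differences of $f$ along $\pi$ is uniformly bounded; by the martingale characterization of the RNP it converges $\mu_\infty$-almost everywhere, and the limit is the derivative of $f$ with respect to $\pi$ along the sequence $(r_n(x))$.

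For the topological tangent conclusion, I would take $S_\infty$ to be the set of points $x \in X_\infty$ that lie on a detour branch, rather than on the backbone, at infinitely many stages $n$. A Borel--Cantelli argument applied to the detour weights assigned during the construction, whose total mass at each stage admits a positive lower bound because thickness forces at least one admissible detour of quantitative size, gives $\mu_\infty(S_\infty) > 0$. For $x \in S_\infty$ and $n$ a stage at which $x$ lies on a detour, the rescaled space $\tfrac{1}{r_n} X_\infty$ contains two essentially disjoint geodesic arcs joining a common pair of endpoints; a measurable choice of nonprincipal ultrafilter $\U(x)$ concentrating on such $n$ then produces an ultralimit tangent cone containing a topological bigon, and no bigon embeds topologically into $\R$.

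The principal obstacle I anticipate is the joint tuning of the detour scales, weights, and locations so that simultaneously (i) the filtration $\F_n$ is fine enough at scale $r_n$ for the conditional expectations to genuinely approximate finite differences of $f$ along $\pi$, which is needed for the differentiation conclusion; (ii) the detours remain macroscopic at scale $r_n$ under blow-up so that they survive in the ultralimit, which is needed for the non-embedding conclusion; and (iii) $\pi_* \mu_\infty$ is absolutely continuous with respect to Lebesgue, so that differentiation in the direction of $\pi$ is well-posed on a full-measure set. All three issues ultimately reduce to quantitative estimates on the detour data coming from the thickness constant and a careful choice of a summable geometric sequence of scales.
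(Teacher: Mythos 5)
Your outline tracks the paper's architecture faithfully: a Laakso-type inverse system of graphs built from detour geodesics supplied by thickness, $\mu_\infty$ obtained as the inverse limit measure pushing forward to Lebesgue on $[0,1]$, $S_\infty$ as the set of points that fall on a circle (``bigon'') at infinitely many stages with $\mu_\infty(S_\infty)>0$ coming from a uniform lower bound $\beta$ on the circle mass at each stage (the paper uses reverse Fatou on $\limsup_i S_i$ rather than Borel--Cantelli, but the mechanism is the same), an ultrafilter concentrating on the detour stages producing a bigon in the tangent cone, and a martingale definition of $f'$ via conditional expectations. The only minor imprecisions in that part are that the parametrizations $\pi$ are only $L$-Lipschitz for some $L>1$ (not $1$-Lipschitz, since the metric is geodesic only on directed edge paths, so the projections contract slightly), and $\mu_i$ is constant on each edge but with edge-dependent density rather than uniform on arcs; neither changes the outcome.

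The genuine gap is in the second bullet. Your argument produces the candidate derivative $f'$ as an a.e.\ martingale limit, but it does not establish differentiability, i.e.\ the quantitative estimate
\begin{equation*}
\limsup_{i\to\infty}\sup_{y\in B_{Rr_i(x)}(x)}\frac{\|f(y)-f(x)-f'(x)(\pi(y)-\pi(x))\|}{r_i(x)}=0 \quad\text{for a.e.\ }x.
\end{equation*}
Writing ``the limit is the derivative of $f$ with respect to $\pi$ along the sequence'' begs exactly the question one needs to answer, and you rightly flag it under (i) as the principal obstacle without a resolution. The paper closes this gap with three additional tools: an explicit boundedness of the conditional expectation $\E^\infty_i$ on $\Lipo{X_\infty;B}$ and its representation by fiber measures $\mu_\infty^p$ (so that finite differences on $X_\infty$ can be replaced, up to $O(\delta_i')$, by finite differences of $\E^\infty_i(f)$ on $X_i$); a fundamental theorem of calculus on the graphs, bounding $\|\E^\infty_i(f)(y_i)-\E^\infty_i(f)(x_i)\|$ by an average of $\E^\infty_i(\|f'-f_n'\|)$ over a shortest path inside $(\pi^i_{i-1})^{-1}(e_{i-1}(x))$; and a maximal-function inequality $\|M(\|g\|)\|_{L^{1,w}}\lesssim \|g\|_{L^p}$ tailored to the filtration, applied to $\|(f-f_n)'\|$. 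Combining these with DCT gives $\limsup_n M(\|(f-f_n)'\|)=0$ a.e., which is what actually forces the approximation. Your tuning conditions (i)--(iii) are exactly the hypotheses these tools require (rapidly decaying $\delta_i'$, the terminal-interval structure ensuring deep points have full measure, $\pi_\#\mu_\infty$ Lebesgue), so the plan is salvageable, but the FTC/maximal-operator machinery is not optional and needs to appear explicitly.
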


\noindent The map $\pi$ is the single chart in the weak RNP Lipschitz differentiable atlas. As a corollary, we obtain a new proof of nonembeddability into RNP spaces:

\begin{corollary} \label{cor:RNPapplication}
A metric space $M$ containing a thick family of geodesics does not biLipschitz embed into any RNP space.
\end{corollary}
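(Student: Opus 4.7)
The plan is to argue by contradiction, combining both parts of Theorem~\ref{thm:mainthmssummary}. Passing if necessary to the completion of $M$ (which still contains a thick family of geodesics, since geodesics extend continuously), I will assume $M$ is complete. Suppose for contradiction that $\iota: M \to B$ is a biLipschitz embedding into an RNP Banach space $B$ with lower biLipschitz constant $c>0$, so $\|\iota(y) - \iota(x)\| \ge c\,d_M(x,y)$ for all $x,y \in M$. Let $X_\infty, \mu_\infty, \pi, S_\infty, r_i, \U$ be as produced by Theorem~\ref{thm:mainthmssummary} for $M$, and set $f := \iota|_{X_\infty}: X_\infty \to B$.

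By Theorem~\ref{thm:weakRNPdiff}, $f$ is differentiable with respect to $\pi$ along the scales $(r_i(x))$ at $\mu_\infty$-almost every $x \in X_\infty$. Since $\mu_\infty(S_\infty) > 0$, I pick such an $x \in S_\infty$: there is a derivative $L_x \in B$ with
$$\sup_{y \in B_{r_i(x)}(x) \cap X_\infty} \frac{\|f(y) - f(x) - L_x(\pi(y) - \pi(x))\|}{r_i(x)} \longrightarrow 0 \quad \text{as } i \to \infty.$$
Combining this with the lower biLipschitz estimate $\|f(y)-f(x)\| \ge c\,d_M(x,y)$ and the triangle inequality yields
$$|\pi(y) - \pi(x)| \,\ge\, \frac{c\,d_M(x,y)}{\|L_x\|} - o(r_i(x)) \qquad \text{for } y \in B_{r_i(x)}(x) \cap X_\infty.$$

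Next, I would rescale the metric by $r_i(x)^{-1}$ and pass to the ultralimit along $\U(x)$. The displayed inequality translates into a genuine lower biLipschitz bound on the induced tangent map $\pi_{\mathrm{tan}}: T_x^{r_i(x),\U(x)} X_\infty \to \R$, which in particular makes $\pi_{\mathrm{tan}}$ a topological embedding. This contradicts Theorem~\ref{thm:Sinfty}, which asserts that the tangent cone $T_x^{r_i(x),\U(x)} X_\infty$ does not topologically embed into $\R$ for any $x \in S_\infty$, finishing the proof. (Note that the upper biLipschitz bound on $\iota$ ensures $L_x \neq 0$, so dividing by $\|L_x\|$ is legitimate; if $\pi$ were infinitesimally constant near $x$, the left-hand side above would force $d_M(x,y) = o(r_i(x))$ on $B_{r_i(x)}(x) \cap X_\infty$, contradicting the fact that $X_\infty$ contains points at genuine scale $r_i(x)$ near $x$ — a consequence of the construction underlying Theorem~\ref{thm:mainthmssummary}.)

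The main obstacle is to carefully verify that differentiation with respect to $\pi$ along the sequence of scales $(r_i(x))$, together with the ultrafilter $\U(x)$, yields the claimed biLipschitz lower bound on $\pi_{\mathrm{tan}}$ in the ultralimit tangent cone, and that the non-degeneracy of $L_x$ is available. Once the relevant ultralimit definitions from Section~\ref{sec:prelims} are unpacked, this should reduce to a standard argument; the substantive content of the corollary is really packed into Theorem~\ref{thm:mainthmssummary}.
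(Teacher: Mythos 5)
Your proof is correct and follows essentially the same route as the paper's: extract a point of $S_\infty$ where differentiability holds, observe that a biLipschitz $f$ would force the blowup $\pi_x$ of $\pi$ to be biLipschitz (hence a topological embedding of the tangent cone into $\R$), and contradict Theorem~\ref{thm:Sinfty}; you simply unpack the factorization $f_x = f'(x)\comp\pi_x$ into an explicit estimate transported through the ultralimit rather than quoting it abstractly. The only slip is the parenthetical attributing $L_x \neq 0$ to the \emph{upper} biLipschitz bound on $\iota$ — it is the \emph{lower} bound $\|f(y)-f(x)\| \geq c\,d_M(x,y)$ that rules out $L_x = 0$, since otherwise differentiability along $(r_i(x))$ would force $d_M(x,y)=o(r_i(x))$ uniformly on $B_{r_i(x)}(x)$, which fails because $X_\infty$ contains points at distance comparable to $r_i(x)$ from $x$ by the graph structure near $\pi^\infty_i(x)$.
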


\noindent The proof is the same as for the differentiation nonembeddability criterion into RNP spaces, Theorem 1.6 from \cite{CK09}.

\begin{proof}
Let $B$ be an RNP space and assume there is a biLipschitz map $g: M \to B$. We may assume $M$ is complete. Let $X_\infty \sbs M$, $\mu_\infty$, $S_\infty$, $r_i(x)$, and $\U(x)$ be as in the statement of the theorem. Since $\mu_\infty(S_\infty) > 0$, there exist a point $x \in S_\infty$ and a nonprincipal ultrafilter $\U(x)$ such that $f$ is differentiable at $x$ along $(r_i(x))_{i=0}^\infty$ with respect to $\pi$ and $T_x^{r_i(x),\U(x)}X_\infty$ does not topologically embed into $\R$. $f$ being differentiable with respect to $\pi$ at $x$ along $(r_i(x))_{i=0}^\infty$ implies that there exists a unique linear map $f'(x): \R \to B$ such that, for every nonprincipal ultrafilter $\U$, the blowup of $f$ at $x$, $f_x: T_x^{r_i(x),\U}X_\infty \to B$, exists and factors though the blowup of $\pi$ at $x$, $\pi_x: T_x^{r_i(x),\U}X_\infty \to \R$, and $f'(x): \R \to B$. That is, $f_x = f'(x) \comp \pi_x$. Since $T_x^{r_i(x),\U(x)}X_\infty$ does not topologically embed into $\R$, $\pi_x$ cannot be biLipschitz, which  by the factorization implies $f_x$ cannot be biLipschitz, in turn implying $f$ cannot be biLipschitz.
\end{proof}

Theorem \ref{thm:mainthmssummary} actually proves a stronger statement, Corollary \ref{cor:Carnotapplication}. We postpone the proof till section \ref{sec:application}. We chose to give a separate proof Corollary \ref{cor:RNPapplication} because it is easier and requires no knowledge of Carnot groups.

\begin{corollary} \label{cor:Carnotapplication}
A complete metric space $M$ containing a thick family of geodesics does not biLipschitz embed into the product metric space $G \cross B$, where $G$ is a Carnot group and $B$ is an RNP space.
\end{corollary}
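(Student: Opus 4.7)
The plan is to mirror the proof of Corollary \ref{cor:RNPapplication}, using Theorem \ref{thm:mainthmssummary} together with an analogue for Carnot-group targets of Theorem \ref{thm:weakRNPdiff}. Suppose for contradiction that $g = (g_G, g_B): M \to G \cross B$ is biLipschitz. Applying Theorem \ref{thm:mainthmssummary} to $M$, we obtain the compact set $X_\infty \sbs M$, probability measure $\mu_\infty$, chart $\pi: X_\infty \to [0,1]$, positive-measure Borel set $S_\infty$, scales $r_i(x)$, and ultrafilters $\U(x)$.

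For the $B$-component I apply Theorem \ref{thm:weakRNPdiff} directly: for $\mu_\infty$-a.e.\ $x$, $g_B$ is differentiable at $x$ with respect to $\pi$ along $r_i(x)$, so the blowup $g_{B,x}: T_x^{r_i(x),\U(x)}X_\infty \to B$ factors as $g_B'(x) \comp \pi_x$ for a unique linear map $g_B'(x): \R \to B$. To handle the $G$-component I would seek the analogous factorization: for $\mu_\infty$-a.e.\ $x$, there is a horizontal one-parameter subgroup $g_G'(x): \R \to G$ such that $g_{G,x} = g_G'(x) \comp \pi_x$. The natural strategy is to adapt the proof of Theorem \ref{thm:weakRNPdiff} to $G$-valued maps by replacing the RNP input (a.e.\ differentiability of Lipschitz curves $\R \to B$) with Pansu's theorem (a.e.\ Pansu differentiability of Lipschitz curves $\R \to G$, with the derivative taking values in the horizontal layer $V_1$). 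The thick family of geodesics provides an abundance of Lipschitz curves in $X_\infty$ along which to apply Pansu differentiation, and the martingale/convergence scheme underlying Theorem \ref{thm:weakRNPdiff} should carry over once one verifies that the scaling and compactness arguments respect the Carnot--Carath\'eodory metric.

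Once both factorizations are in hand, I combine them: the full blowup
\[
g_x = (g_{G,x}, g_{B,x}) = \bigl(g_G'(x), g_B'(x)\bigr) \comp \pi_x \colon T_x^{r_i(x),\U(x)}X_\infty \longrightarrow G \cross B
\]
factors through $\pi_x \colon T_x^{r_i(x),\U(x)}X_\infty \to \R$. Since $\mu_\infty(S_\infty) > 0$, I pick $x \in S_\infty$ at which both differentiability statements hold. As $g$ is biLipschitz, so is $g_x$, which forces $\pi_x$ to be injective and in fact to be a topological embedding into $\R$, contradicting the conclusion of Theorem \ref{thm:Sinfty} that $T_x^{r_i(x),\U(x)}X_\infty$ does not topologically embed into $\R$.

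The principal obstacle is the Carnot-group differentiation step: establishing that Theorem \ref{thm:weakRNPdiff} remains valid for Lipschitz maps into Carnot groups, with the derivative forced to be a horizontal one-parameter subgroup. This is precisely the added content beyond Corollary \ref{cor:RNPapplication}, and once granted, the remainder of the argument is a direct product-factorization variant of the earlier proof.
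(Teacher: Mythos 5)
Your proposal correctly identifies the overall shape of the argument — pick a differentiability point in $S_\infty$, factor the blowup of $g$ through $\pi_x$, and contradict the fact that the tangent cone does not embed in $\R$ — but it rests on a substantial lemma that you do not establish and that the paper deliberately avoids having to prove. You ask for a Carnot-group analogue of Theorem \ref{thm:weakRNPdiff} (weak differentiability of $X_\infty \to G$ with a horizontal one-parameter-subgroup derivative, proved via Pansu differentiation). The machinery of Sections \ref{sec:approx}--\ref{sec:weakRNPdiff} is built on conditional expectation, Banach-space-valued martingale convergence, and linear subtraction ($k_n = f - f_n$), none of which makes sense for maps into a non-abelian group $G$. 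Reworking all of this for Carnot-group targets is a significant project, not a routine transplantation, and you flag it yourself as the ``principal obstacle'' without resolving it. As written, the proposal has a genuine gap.

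The paper's route is both more elementary and instructive: it never differentiates $f_1$ as a $G$-valued map. Instead, it postcomposes $f_1$ with the abelianization $\psi: G \to \R^k$, so that $(\psi \comp f_1, f_2): X_\infty \to \R^k \oplus B$ lands in an RNP space and the already-proved Theorem \ref{thm:weakRNPdiff} applies directly. The Carnot structure then enters only through the \emph{unique horizontal lifting property} of $\psi$: from Remark \ref{rmk:tangentconegeodesics} one has two geodesics $\gamma, \gamma'$ in $T_x X_\infty$ forming a circle with $\pi_x \comp \gamma = \pi_x \comp \gamma' = \mathrm{id}_\R$; the factorization forces $\psi \comp (f_1)_x \comp \gamma = \psi \comp (f_1)_x \comp \gamma'$, and uniqueness of lifts upgrades this to $(f_1)_x \comp \gamma = (f_1)_x \comp \gamma'$ in $G$ itself. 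Combined with the same identity for $f_2$ and bi-Lipschitzness of $g_x$, this yields $\gamma = \gamma'$, contradicting that their union is a circle. This is the key trick to look for: abelianize first so that the RNP differentiation theorem you already have does the heavy lifting, then recover the $G$-valued statement from the lifting property rather than from a new differentiation theorem.
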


At the time of this writing, Theorems \ref{thm:ostrovskii1} and \ref{thm:ostrovskii2} were the only known nontrivial means by which one could prove nonembeddability of thick families of geodesics into metric spaces. Suppose $G$ is a nonabelian Carnot group, such as the Heisenberg group, and $B$ is an RNP which is not super-reflexive, such as $\ell^1$. Then $G$ embeds into no RNP space by the differentiation nonembeddability criterion, so Theorem \ref{thm:ostrovskii1} does not apply to $G \cross B$, and $B$ has no nontrivial Markov convexity, so Theorem \ref{thm:ostrovskii2} does not apply to $G \cross B$. That non-super-reflexive spaces have no nontrivial Markov convexity follows from the fundamental theorem of Mendel-Naor on Markov convexity (Theorem 1.3 of \cite{MN13}), and Pisier's renorming theorem (Theorem 11.37 of \cite{Pi16}). Thus, Corollary \ref{cor:Carnotapplication} is a genuinely new nonembeddability result. 

In our second result, Theorem \ref{thm:gendiamembed}, we restrict our attention from a general metric containing a thick family of geodesics to a nonRNP Banach space $B$. This is indeed a ``restriction" since every such $B$ contains a thick family of geodesics by \cite{Os14b}, as previously stated. In this setting, we prove that the subset $X_\infty$ and measure $\mu_\infty$ can be constructed to satisfy the true RNP differentiation nonembeddability criterion (not just the weakened form described in Theorem \ref{thm:mainthmssummary}). That it satisfies the true RNP differentiation criterion is a consequence of the fact that it is an inverse limit of an admissible system of graphs, defined in \cite{CK15}. In that article, Cheeger and Kleiner proved that such spaces are PI spaces. They also gave a necessary and sufficient condition for these spaces to satisfy the differentiation nonembeddability criterion into RNP spaces, stated in Theorem 10.2 of \cite{CK15}. We verify this condition for our subset $X_\infty \sbs B$, and thus our result can be viewed as a converse to Theorem 10.2.

\begin{theorem} \label{thm:gendiamembed}
Every nonRNP Banach space contains a biLipschitz copy of a metric measure space satisfying the differentiation nonembeddability criterion. The metric measure space is an inverse limit of admissible graphs, as in \cite{CK15}, with nonEuclidean tangent cones at almost every point.
\end{theorem}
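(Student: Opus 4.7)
The plan is to start by applying Theorem \ref{thm:ostrovskii1} to secure a thick family of geodesics $T$ inside the nonRNP Banach space $B$, and then to use the hierarchical branching structure inherent in $T$ to carve out an admissible inverse system of graphs, in the sense of \cite{CK15}, directly inside $B$. Concretely, I would build inductively a sequence of finite subsets $X_0 \sbs X_1 \sbs X_2 \sbs \cdots \sbs B$, where each $X_n$ is a finite union of affinely parametrized geodesic segments meeting only at endpoints and carries the structure of a combinatorial graph. Take $X_0$ to be a single geodesic $\gamma_0 \in T$ parametrized by $[0,1]$; to pass from $X_n$ to $X_{n+1}$, I would subdivide every edge of $X_n$ at scale $2^{-n}$ and, on each short sub-arc, invoke the thickness of $T$ to obtain a second geodesic in $B$ that agrees with the original at the endpoints of the sub-arc but detours through a new point of $B$, producing a ``theta''-type admissible modification at scale $2^{-n}$.

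Next, I let $X_\infty$ be the closure in $B$ of $\bigcup_n X_n$, and $\mu_\infty$ the normalized weak-$*$ limit of the natural one-dimensional Hausdorff measures on the $X_n$. The subdivision/inclusion maps $X_{n+1} \to X_n$ assemble the $X_n$ into an abstract admissible inverse system $\widehat{X}_\infty$ in the precise sense of \cite{CK15}, and projection to $X_0$ gives a canonical $1$-Lipschitz map $\pi: X_\infty \to [0,1]$. Provided the amplitude of each detour is chosen much smaller in the $B$-norm than its longitudinal length, the natural map $\widehat{X}_\infty \to X_\infty \sbs B$ is biLipschitz, so $X_\infty \sbs B$ agrees up to biLipschitz equivalence with the abstract inverse limit. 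Then by the theorem of Cheeger-Kleiner \cite{CK15}, $X_\infty$ equipped with $\mu_\infty$ is a PI space, and in particular admits an RNP Lipschitz differentiable structure via \cite{CK09} with $\pi$ as a global chart.

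For the non-embedding half of the differentiation criterion, I would verify the combinatorial condition of Theorem 10.2 of \cite{CK15} on the admissible system $\widehat{X}_\infty$: namely, that at every scale $n$ every edge of $X_n$ genuinely splits into two distinct parallel edges at level $n+1$. The inductive construction realizes this branching at every scale on every edge by design, so Theorem 10.2 yields that the tangent cones of $\widehat{X}_\infty \cong X_\infty$ fail to biLipschitz embed into any $\R^k$ at $\mu_\infty$-almost every point. Combined with the RNP differentiable atlas from the PI structure, this is exactly the true differentiation nonembeddability criterion.

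The main obstacle will be calibrating the scales of the inductive perturbations drawn from $T$ so that both of the following hold simultaneously: (a) the amplitude of each detour is small enough in the Banach norm that the restriction of the $B$-norm to $X_\infty$ is comparable to the intrinsic path metric from the admissible system, guaranteeing the biLipschitz embedding; and (b) the detours are large enough intrinsically that each level $n$ is a genuine admissible refinement rather than a near-collapse to the previous level, so that the branching persists at every scale. The definition of a thick family of geodesics gives considerable freedom in the choice of perturbing geodesics at arbitrarily small ambient amplitude, and the key technical step is a diagonal-type argument selecting these perturbations so that the ratio of intrinsic-to-ambient displacement at scale $n$ stays bounded below by a uniform constant independent of $n$.
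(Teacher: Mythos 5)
Your plan has the quasiconvexity calibration exactly backwards, and this is fatal. In your item (a) you propose making the ambient amplitude of each detour ``much smaller in the $B$-norm than its longitudinal length.'' But if a theta-type replacement at scale $\ell$ produces two parallel arcs whose ambient separation is $\epsilon_n \ell$ with $\epsilon_n \to 0$, then the two antipodal points of the lens have intrinsic distance $\approx \ell$ while their ambient distance is $\approx \epsilon_n \ell$; the intrinsic-to-ambient ratio blows up, the subset is not quasiconvex in $B$, and the identification $\widehat{X}_\infty \to X_\infty$ is \emph{not} biLipschitz. This is precisely the defect the paper identifies in Ostrovskii's original construction (``the metrics on his system are not uniformly quasiconvex''), and it is exactly what this theorem has to repair. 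The actual proof runs the other way: using the nonRNP characterization (a convex $C$ with $c \in \text{co}(C \setminus B_{4\delta}(c))$ for some uniform $\delta > 0$), the detour vectors $c_j$ are chosen with $\|c - c_j\|$ uniformly bounded \emph{below} by $4\delta$ after rescaling, so the ambient separation of the parallel arcs is always comparable to their intrinsic separation. The quantity that must be made small is not the detour amplitude but the \emph{subdivision fineness} relative to the previous level's edge length (the parameter $N$ in the paper), which controls fiber diameters and thereby preserves quasiconvexity across scales.

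There is a second gap: invoking ``the thickness of $T$'' on each short sub-arc to obtain a local detour does not produce an admissible system. Thickness is a cumulative condition — it guarantees a lower bound on \emph{total} deviation along a geodesic, not a deviation on each sub-arc you choose — so you cannot force a genuine branching on every edge at every scale this way. And admissibility in the sense of \cite{CK15} requires all edges at a given level to have equal length, which a thick family gives you no handle on. The paper sidesteps both problems by not passing through Theorem \ref{thm:ostrovskii1} at all: it works directly with the convex-set characterization of nonRNP, writes each $c \in C$ as a convex combination $c = \sum \alpha_j c_j$ with $\|c - c_j\| \geq 4\delta$, perturbs the coefficients to dyadic rationals with common denominator $2^{-n_c}$, and then ``splits'' repeated terms so that every coefficient equals $2^{-n_c}$; this is what produces the equal-length edges and the concrete model graph $\Gamma(c)$ which is then transplanted, rescaled, into each edge. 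Your approach would need some replacement for both of these devices, and as written it does not have one.
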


\subsubsection{Proof Methods} \label{ss:proofmethods}
The subset $X_\infty$ of a metric space $M$ containing a thick family of geodesics from Theorem \ref{thm:mainthmssummary} is constructed as an inverse limit of graphs. Cheeger and Kleiner proved in \cite{CK15} that inverse limits of certain ``admissible" inverse systems of graphs, such as Laakso spaces, are PI spaces and hence RNP Lipschitz differentiability spaces. It is this result which lead us to believe that $X_\infty$ could be constructed to satisfy some kind of RNP Lipschitz differentiability. However, our space $X_\infty$ cannot be constructed to be a PI space in any obvious way, and thus the theory of \cite{CK09} does not apply; we are required to construct derivatives of RNP-valued Lipschitz functions and prove their defining approximation property by hand. To do so, we use only the almost sure differentiability of Lipschitz maps $\R \to B$ and the almost sure convergence of $B$-valued martingales for RNP spaces, which are quite classical compared to the asymptotic norming property of RNP spaces used in \cite{CK09}. We also make heavy use of the uniformly topology on Banach spaces of Lipschitz functions, in contrast to the Sobolev space techniques employed in \cite{CK09} and \cite{CK15}.

Apart from these differences in proof techniques, the inverse systems of graphs we consider are fundamentally different from the admissible systems in \cite{CK15} for two reasons. Firstly, in \cite{CK15}, the graphs are equipped with geodesic metrics, and the metrics on our graphs are only geodesic along directed edge paths. In fact, the inverse limit space need not even be quasiconvex, while PI spaces are always quasiconvex. Secondly, in \cite{CK15}, the lengths of edges in the sequence of graphs decrease by a constant factor $m \geq 2$ in each stage of the sequence, independent of the stage or edge. In our graphs, the edge lengths decrease by factors going to $\infty$. We make frequent use of this rapid decay in a number of independent results, such as \eqref{eq:piLipbnd}, \eqref{eq:fiberdiam}, and Lemma \ref{lem:pi-1Lipbnd}. Loosely, the rapid decay in edge length allows us to well-control the local geometry near a point along scales proportional to the lengths of edges containing the projections of the point, at the cost of control over the geometry along other scales, which would be necessary to prove true RNP differentiability.

The uniform topology on Lipschitz algebras has been studied before within the context of Lipschitz differentiability spaces. For, example, in \cite{Sch14}, Schioppa showed how to associate a Weaver derivation (which involves continuity with respect to uniform topology) to an Alberti representation, and Alberti representations were demonstrated by Bate in \cite{Bat15} to be intimately connected to Lipschitz differentiability. Schioppa constructs the partial derivative of a function by taking its derivative along curve fragments and averaging them together with respect to the Alberti representation. Our procedure for constructing the derivative of a function (see Theorem \ref{thm:derivdef}), is very similar in nature; indeed, Lemma \ref{lem:measdisint} gives Alberti representations of $\mu_i$, which (after taking a suitable limit) give rise to an Alberti representation of $\mu_\infty$. We also note that in \cite{Bat15}, Bate gives necessary and sufficient conditions for a collection of Alberti representations to induce a Lipschitz differentiable structure on a metric measure space using what he called \emph{universality} (see Definition 7.1 from \cite{Bat15}). Our representation from Lemma \ref{lem:measdisint} will generally fail this property (or at least doesn't obviously satisfy it - we don't actually provide an example), which is consistent with our discussion that the space $(X_\infty,d,\mu_\infty)$ is not a true Lipschitz differentiability space (again, we don't actually provide an example of this). We believe it is possible to find a weakened form of universality corresponding to the weakened form of differentiation from Theorem \ref{thm:weakRNPdiff}.

The construction of the inverse limit of admissible graphs, $X_\infty$, of Theorem \ref{thm:gendiamembed} is achieved by fine tuning two of the aspects of Ostrovskii's construction of a thick family of geodesics in nonRNP spaces. His construction is also essentially an inverse limit of a system of graphs, but the system is not ``admissible" in the sense of \cite{CK15} for two reasons. Firstly, the metrics on his system are not uniformly quasiconvex, which is a necessary condition for a metric space to be a PI space. Secondly, the lengths of edges in a graph in an admissible system must be constant, but in the system of \cite{Os14b}, the ratio of lengths of two edges in a graph may become unbounded.

The second obstacle is easily overcome in the following way: the length of an edge in a graph in the system from \cite{Os14b} corresponds to the coefficient $\alpha_i$ of some convex combination $z = \alpha_1z_1 + \dots \alpha_n z_n$ with $\|z - z_i\| > \delta$ and $\|z\|, \|z_i\| < 1$. By density of the dyadic rationals in $(0,1)$, we may make small adjustments $z_i \to z_i'$ to obtain $z = q_1 z_1' + \dots q_n z_n'$ with each $q_i$ a dyadic rational, all while maintaining $\|z - z_i'\| > \delta$ and $\|z\|, \|z_i'\| < 1$. We then `split up' the convex combination into terms whose coefficients have numerator equal to 1. For example, $\frac{1}{2}z_1' + \frac{1}{4}z_2' + \frac{1}{4}z_3' \to \frac{1}{4}z_1' + \frac{1}{4}z_1' + \frac{1}{4}z_2' + \frac{1}{4}z_3'$. The edges corresponding to this convex combination now all have length $\frac{1}{4}$. The first obstacle can be overcome by constructed $X_i$ with rapidly decreasing edge length, similar to construction in the proof of Theorem \ref{thm:existinvsys}. Using the rapid decrease in edge length to control the quasiconvexity of the graphs is similar to the proof of Lemma \ref{lem:piLipbnd}.

\subsection{Outline}
Section \ref{sec:prelims} sets notation and terminology and defines thick families of geodesics.

Sections \ref{sec:axioms}-\ref{sec:weakRNPdiff} are concerned with the proof of Theorem \ref{thm:mainthmssummary}, Section \ref{sec:application} contains the proof of Corollary \ref{cor:Carnotapplication}, Section \ref{sec:invlimgraphsnonRNP} contains the construction of the inverse limit of graphs in nonRNP Banach spaces from Theorem \ref{thm:gendiamembed}, and Section \ref{sec:questions} contains some relevant questions.

For an efficient reading of Sections \ref{sec:axioms}-\ref{sec:weakRNPdiff}, we advise the reader to start with Section \ref{sec:axioms}, skip ahead to Section \ref{sec:weakRNPdiff}, and then refer back to the between sections as they are needed to understand the proof of Theorem \ref{thm:weakRNPdiff}.

In Section \ref{sec:axioms}, we give the axioms for thick inverse systems of graphs whose inverse limit we are able to prove the weak form of differentiation of. Also included in this section are frequently used consequences of the axioms and a proof of one of the main theorems of the article, Theorem \ref{thm:existinvsys}. This theorem asserts the existence of the thick inverse system of graphs in any metric space containing a thick family of geodesics. In Section \ref{sec:asymprops}, we define the set $S_\infty$ and prove  $\mu_\infty(S_\infty) > 0$. We also include results on asymptotic local  geometry of the graphs. Section \ref{sec:approx} covers the use of conditional expectation in approximating functions on $X_\infty$ via functions on $X_i$. Also in this section is the definition of the derivative of RNP space-valued Lipschitz functions on $X_\infty$. A relevant maximal operator and corresponding maximal inequality are defined and proved in Section \ref{sec:maximalineq}. Section \ref{sec:weakRNPdiff} contains the proof of the main theorem, Theorem \ref{thm:weakRNPdiff}, the weak form of differentiability.
  
\section{Preliminaries}
\label{sec:prelims}
Let $(M,d)$ be a metric space and $p,q \in M$. A \emph{$p$-$q$ geodesic} is an isometric embedding from some closed bounded interval into $M$ mapping the left endpoint of the interval to $p$ and the right endpoint to $q$. $d$ is said to be \emph{geodesic} if there exists a $p$-$q$ geodesic for every $p,q \in M$.

The next definition concerns thick families of geodesics. Informally, a family of geodesics is concatenation closed if for any $\gamma_1,\gamma_2$ in the family, the geodesic obtained by concatenating an initial segment of $\gamma_1$ and a terminal segment of $\gamma_2$ also belongs to the family. Informally, a concatenation closed family of $p$-$q$ geodesics is $\alpha$-thick if for any geodesic $\gamma$ in the family and any finite set of points $F$ in the image of $\gamma$, there is another geodesic $\tilde{\gamma}$ in the family that intersects $\gamma$ at each point of $F$ (but possibly more points), and so that the deviation of $\tilde{\gamma}$ from $\gamma$ between their points of intersection adds up to at least $\alpha$. 

\begin{definition} \label{def:thickfamily}
We follow \cite{Os14a}. A family of $p$-$q$ geodesics $\Gamma$ with common domain $[a,b]$ is said to be \emph{concatenation closed} if for every $c \in [a,b]$ and $\gamma_1,\gamma_2 \in \Gamma$ with $\gamma_1(c) = \gamma_2(c)$, the concatenated curve $\gamma$ defined by $\gamma(t) = \gamma_1(t)$ if $t \in [a,c]$, $\gamma(t) = \gamma_2(t)$ if $t \in [c,b]$, also belongs to $\Gamma$.

Given $\alpha > 0$, a concatenation closed family of $p$-$q$ geodesics $\Gamma$ sharing a common domain $[a,b]$ is said to be \emph{$\alpha$-thick} or an \emph{$\alpha$-thick family of geodesics} if for every $\gamma \in \Gamma$ and $a = t_0 < t_1 < \dots t_k = b$, there exist $a= q_0 < s_1 < q_1 < s_2 < \dots s_j < q_{j} = b$ and $\tilde{\gamma} \in \Gamma$ such that
\begin{itemize}
\item $\{t_i\} \sbs \{q_i\}$
\item $\gamma(q_i) = \tilde{\gamma}(q_i)$
\item $\sum_{i=1}^j d(\gamma(s_{i}),\tilde{\gamma}(s_i)) \geq \alpha$
\end{itemize}
\noindent A concatenation closed family of $p$-$q$ geodesics $\Gamma$ sharing a common domain $[a,b]$ is said to be \emph{thick} or a \emph{thick family of geodesics} if it is $\alpha$-thick for some $\alpha > 0$.
\end{definition}

\begin{example}[Laakso-Lang-Plaut Diamond Space]
The following space was inspired by constructions of Laakso in \cite{La00} but first appeared in \cite{LP01}. We inductively define a sequence of metric spaces which are graphs equipped with the path metric. $L_0$ is defined to be $I = [0,1]$; as a graph it has two vertices, 0 and 1, and one edge. $L_{i+1}$ is obtained from $L_i$ by replacing each edge of $L_i$ with a copy of the graph show in Figure \ref{fig:laakso}. The edges are weighted so that the diameter of each $L_i$ is 1. There are canonical 1-Lipschitz maps $L_{i+1} \to L_i$, and $L_\infty$ is defined to be the inverse limit of this system. The collection of all 0-1 geodesics in $L_\infty$ is a $(\frac{1}{2}-\eps)$-thick family of geodesics for every $\eps > 0$. When equipped with a certain measure, $L_\infty$ also becomes an RNP Lipschitz differentiability space with a single differentiable chart given by the canonical map $L_\infty \to L_0 = I \sbs \R$.
\end{example}

\begin{figure}
\includegraphics[scale=1]{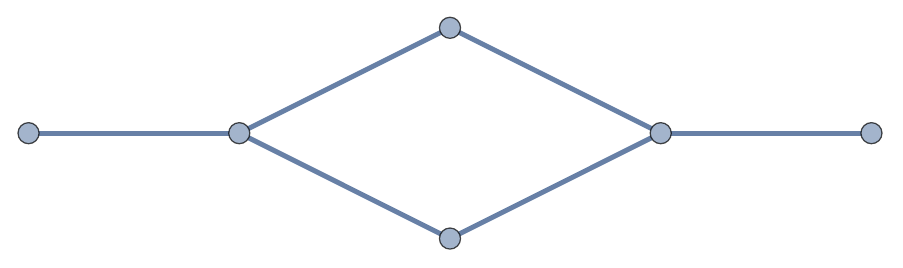}
\caption{$L_1$, the first graph of the Laakso-Lang-Plaut diamond graphs. Each new graph $L_{i+1}$ is obtained from $L_i$ by replacing each edge with a copy of $L_1$, scaled down so that the diameter of $L_{i+1}$ remains 1.}
\label{fig:laakso}
\end{figure}

Throughout, $(B,\|\cdot\|)$ will denote a general Banach space. In places where differentiation or martingale convergence is involved, it may be necessary to assume that $B$ is an RNP space, and in such cases the assumption will be stated.

Whenever $(X,\A,\mu)$ is a measure space and $p \in [1,\infty]$, the Lebesgue space $L^p(X,\A,\mu)$ of (equivalence classes of) real-valued functions will be denoted $L^p(\mu)$. When dealing with $B$-valued functions, we use the notation $L^p(\mu;B)$ (see Chapter 1 of \cite{Pi16} for background on Bochner measurable functions, integrals, and conditional expectations). 

Given two metric spaces $(X,d_X)$ and $(Y,d_Y)$ and a Lipschitz map $f: X \to Y$, we define $\Lip{f} := \sup_{x \neq y} \frac{d_Y(f(x),f(y))}{d_X(x,y)}$. For a metric space $(X,d)$ with basepoint $x_0$, define $\text{Lip}_{0}(X;B)$ to be the Banach space of Lipschitz functions $f: X \to B$ satisfying $f(x_0) = 0$, equipped with the norm $\|f\|_{\text{Lip}_{0}(X;B)}:= \text{Lip}(f)$ (when $B = \R$, we suppress notation and simply write $\Lipo{X}$). Note that, when $\text{diam}(X) \leq 1$, $\|f\|_{\text{Lip}_{0}(X;B)} \leq \|f\|_{\text{unif}}$ (we shall generally find ourselves in this situation).

Given a point $x \in X$, a sequence $(r_i)_{i=0}^\infty$ decreasing to 0, and a nonprincipal ultrafilter $\U$ on $\N$, we define the \emph{tangent cone} of $X$ at $x$, $T_x^{r_i,\U}X$, to be the ultralimit of the sequence of pointed spaces $(X,x,\frac{1}{r_i}d)$. Given a Lipschitz map $X \to B$, the blowup of $f$ at $x$, $f_x: T_x^{r_i,\U}X$, is the ultralimit of the sequence of maps $\frac{1}{r_i}(f-f(x)) + f(x): (X,x,\frac{1}{r_i}d) \to B$, if it exists. The ultralimit exists if the limit exists in the usual sense or if $B$ is finite dimensional.

A finite, metric \emph{graph} (or just graph) is a metric space $X$ equipped with a finite set of vertices, $V(X)$, and a finite set of edges, $E(X)$, satisfying some properties.
\begin{itemize}
\item $V(X) \sbs X$, and $E(X) \sbs \mathcal{P}(X)$, the power set of $X$.
\item Each $e \in E(X)$ is isometric to a compact interval $[a,b]$, and under any isometry $[a,b] \to e$, $a$ and $b$ get mapped to vertices, called the vertices of $e$, and no other point $c \in (a,b)$ gets mapped to a vertex.
\item If $e_1,e_2 \in E(X)$ with $e_1 \neq e_2$, then $e_1 \cap e_2$ is empty, or $e_1 \cap e_2$ consists of one or two vertices.
\end{itemize}

The graph is \emph{directed} if each edge is equipped with a direction, which is simply an ordering of its two vertices. The first vertex is called the \emph{source}, and the second is called the \emph{sink}. We say that the edge is directed from the source to the sink.

If $A$ is a Borel subset of a finite graph, $|A|$ denotes its length measure. If $x,y$ are points in a finite graph, $|x-y|$ denotes the distance between $x$ and $y$ with respect to the length metric, the metric given by the infimal length of paths between $x$ and $y$. A length minimizing path from $x$ to $y$ will be denoted $[x,y]$ (so that $|x-y| = |[x,y]|$), and is frequently referred to as a \emph{shortest path}. Since shortest paths need not be unique,  the notation ``$[x,y]$" does not unambiguously define one set, but it should be clear from context what is begin referred to. In any case, as far as this article is concerned, the nonuniqueness of shortest paths don't pose any problems.


\section{Inverse Systems of Nested Graphs}
\label{sec:axioms}
We begin this section by listing some axioms for a ``thick inverse system" of nested metric graphs, see Definition \ref{def:thickinvsys}. We introduce thick inverse systems for two reasons: one - we are able to prove our differentiation theorem, Theorem \ref{thm:mainthmssummary}, for the inverse limit of these systems, and two - we are able to prove that a thick inverse system can be found in any metric space containing a thick family of geodesics, see Theorem \ref{thm:existinvsys}.

\subsection{Axioms and Terminology}
\label{ss:axioms&term}

\begin{definition}
\label{def:thickinvsys}
An inverse system of nested metric measure directed graphs satisfying the following Axioms \ref{ax:graph1} - \ref{ax:thickcircsetlength} and equipped with the measure from Definition \ref{def:meas} will be called a \textbf{thick inverse system}.
\end{definition}

We use the notation $(X_0,d,\mu_0) \overset{\leftarrow}{\subseteq} (X_1,d,\mu_1) \overset{\leftarrow}{\subseteq} \dots$ for a system of nested metric directed graphs. The maps $X_{i+1} \to X_i$ are denoted $\pi^{i+1}_i$. Let $i \geq 0$ and $j \geq i$, and define $\boldsymbol{\pi^j_i} := \pi^{i+1}_i \comp \pi_{i+1}^{i+2} \comp \dots \pi^j_{j-1}: X_j \to X_i$. \\

\noindent Graph and Length Axioms:

\begin{figure}
\includegraphics[scale=.375]{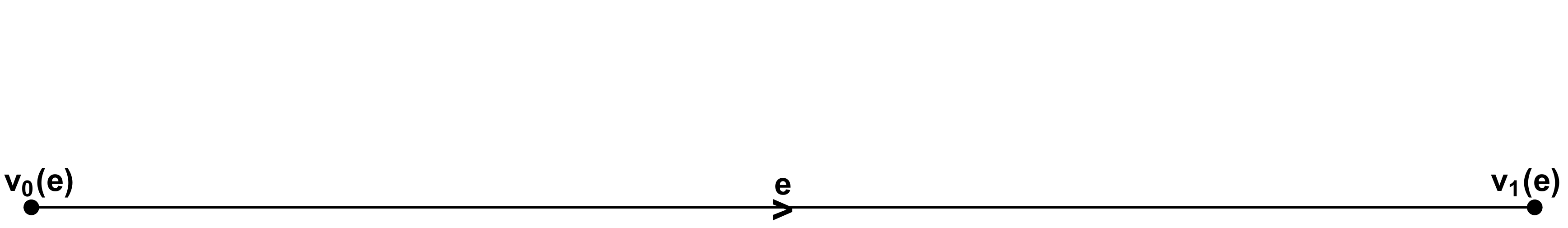}
\caption{A directed edge $e$ of $X_i$, shown in black.}
\label{fig:ei}
\includegraphics[scale=.375]{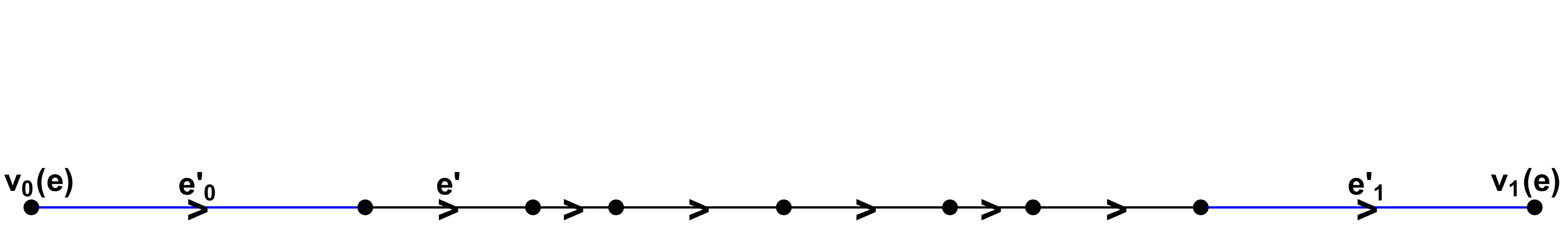}
\caption{The directed subdivision of $e$ in $X_i'$. Terminal subedges $e_0'$ and $e_1'$ are shown in blue, and nonterminal subedges are shown in black.}
\label{fig:eiprime}
\includegraphics[scale=.375]{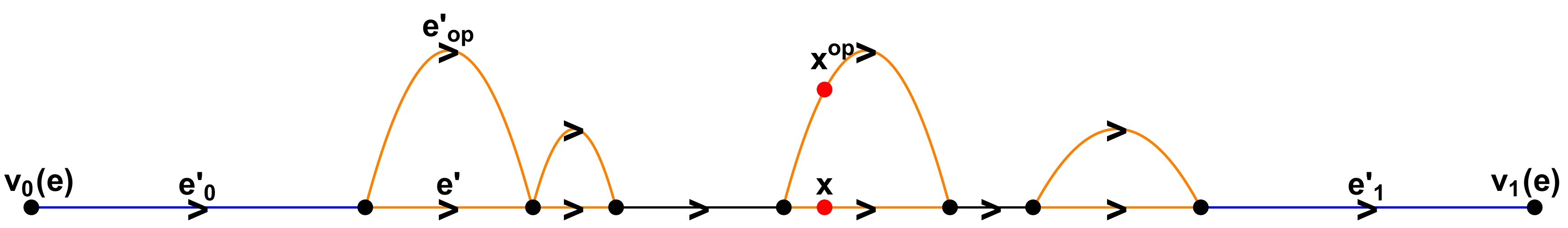}
\caption{The set $(\pi_i^{i+1})^{-1}(e)$ in $X_{i+1}$. Terminal intervals are shown in blue, nonterminal intervals are shown in black, and circles are shown in orange. Examples of subedges $e'$ and opposite subedge $e'_\op$ are labeled, as are example point and opposite point $x$ and $x^\op$, shown in red.}
\label{fig:ei+1}
\end{figure}

\begin{enumerate}[label=(A\arabic*)]
\item \label{ax:graph1} $X_0$ has two vertices, denoted 0 and 1, and one edge directed from 0 to 1, with length 1. We identify $X_0$ with $I := [0,1]$.
\item \label{ax:graph2} There is a directed subdivision of $X_i$, denoted $X_i'$, satisfying the properties below. It will be helpful to refer to Figures \ref{fig:ei}, \ref{fig:eiprime}, and \ref{fig:ei+1} while reading \ref{ax:graph2}.  
	\begin{enumerate}[label=(\roman*)]
    \item \label{subax:graph2opintcirc} For each edge $e' \in E(X_i')$, $(\pi^{i+1}_i)^{-1}(e') = e' \cup e'_{\text{op}}$, where either $e'_\text{op} = e'$, or $e'_\op$ is an edge having the same source and sink vertices as $e'$, but whose interior is disjoint from the rest of $X_{i+1}$. $e'_\text{op}$ is called the \textbf{opposite edge} of $e'$ in $X_{i+1}$ (we may write $e_\text{op}$ or $e^\text{op}$ depending on the presence of other super or subscripts). We also define $(e'_\op)_\op := e'$. \\
    
For future use, we note that, with respect to the length metric, the diameter of $(\pi^{i+1}_i)^{-1}(e')$ equals $|e'|$. Thus, with respect to $d$,
\begin{equation}
\label{eq:edgediam}
\text{diam}((\pi^{i+1}_i)^{-1}(e')) \leq |e'|
\end{equation}

Given a point $x \in e'$, we similarly define $x^\text{op}$ to be the unique point of $e'_\text{op}$ for which $\pi_i^{i+1}(x^\text{op}) = x$, and call $x^\text{op}$ the \textbf{opposite point} of $x$ (in $X_{i+1}$). (If $e'_\text{op} = e'$, then $x^\text{op} = x$. We may also write $x_\text{op}$ depending on the presence of other super or subscripts.) Again, we also define $(x^\op)^\op := x$. 

If $e'_\op = e'$, we call $(\pi^{i+1}_i)^{-1}(e')$ an \textbf{interval} (it is an interval topologically). If $e'_\op \neq e'$, we call $(\pi^{i+1}_i)^{-1}(e')$ a \textbf{circle} (it is a circle topologically).
    \item \label{subax:graph2termint}  If $e_0'$ and $e_1'$ are terminal edges in the subdivision of some edge $e \in E(X_i)$ (meaning they share a vertex with $e$), then $(\pi^{i+1}_i)^{-1}(e_0')$ and $(\pi^{i+1}_i)^{-1}(e_1')$ are intervals (so not circles). We refer to these edges of $X_i'$ and $X_{i+1}$ as \textbf{terminal intervals} (sometimes terminal edges) of $X_{i+1}$ and also as \textbf{terminal subintervals} (sometimes terminal subedges) of $e$. We note that a subedge $e' \in E(X_i')$ of $e$ is not a terminal subinterval if and only if it is contained in the interior of $e$.
    \end{enumerate}
\end{enumerate}

\noindent Metric Axioms:

\begin{enumerate}[resume*]
\item \label{ax:metricd} For any $i \geq 0$, $d$ is geodesic when restricted to any directed edge path of $X_i$, meaning there is an isometry from a compact interval to this edge path.
\item \label{ax:metricpi} $\pi_i^{i+1}: X_{i+1} \to X_i'$ acts identically on any $e' \in E(X_i') \sbs E(X_{i+1})$, and it collapses any $e'_\op \in E(X_{i+1}) \setminus E(X_i')$ isometrically onto $e'$.
\end{enumerate}

\noindent Thickness Axiom:

Suppose $e' \in E(X_i')$ is an edge such that $(\pi_i^{i+1})^{-1}(e')$ is a circle. For any $t \in e'$, let $t^\op \in e'_\op$ denote the opposite point. Define the \textbf{height} of $e'$ by \textbf{ht(}$\boldsymbol{e')} := \max_{p \in e'} d(p,p^\op)$ (the height is between 0 and $|e'|$ and is a measure of how close the circle is to being to a standard circle; it equals $|e'|$ if and only if the circle is isometric to a standard circle of diameter $|e'|$).

\begin{enumerate}[resume*]
\item \label{ax:thickcircht} There is a constant $\alpha > 0$ (independent of $i$) such that $\height{e'} \geq \alpha|e'|$ for every $e' \in E(X_{i}')$.
\item \label{ax:thickcircsetlength} Let $P$ be a directed edge path from 0 to 1 in $X_i'$, and let $E_{\text{circ}}(P)$ denote the set of edges $e' \sbs P$ along the path for which $(\pi_i^{i+1})^{-1}(e')$ is a circle. Then there is a constant $\beta > 0$ (independent of $i$ and $P$) such that $|\cup E_{\text{circ}}(P)| \geq \beta$.
\end{enumerate}

\noindent Measure Definition:

\begin{definition}
\label{def:meas}
Define $\boldsymbol{(\mu_i)_{i=0}^\infty}$ to be the unique sequence of probability measures satisfying the following recursion: $\mu_0$ is length (Lebesgue) measure on $X_0 = [0,1]$. Restricted to any edge of $X_{i+1}$, $\mu_{i+1}$ is a constant multiple of length measure and for any $e' \in E(X_i')$, $\mu_{i+1}(e') = \mu_i(e')$ if $e'_\op = e'$, and $\mu_{i+1}(e') = \mu_{i+1}(e'_\op) = \frac{1}{2}\mu_{i}(e')$ if $e'_\op \neq e'$.
\end{definition}

\subsection{Elementary Consequences of Axioms}
Throughout this subsection, fix a thick inverse system, using the same notation as in the previous subsection. We begin with a proposition that lists, without proof, some elementary consequences of the axioms. We use these facts often and without mention. Then we prove some less immediate facts about the metric structure that will be needed for subsequent results.

\begin{prop}
The following are true: 
\begin{itemize}
\item The map $\pi^{i+1}_i: X_{i+1} \to X_i$ is a projection onto $X_i \sbs X_{i+1}$; $\pi_i^{i+1}\res{X_i} = id_{X_i}$.
\item $\pi^{i+1}_i$ is direction preserving, and by induction the same is true for $\pi^j_i$, $j \geq i$. Thus, $\pi^{j}_i$ restricted to any directed edge path is an isometry.
\item The restriction of $\mu_i$ to any interval or circle of $X_i$ is a constant multiple (with constant $\leq 1$) of length measure.
\item $(\pi_i^{i+1})_\#(\mu_{i+1}) = \mu_i$.
\end{itemize}
\end{prop}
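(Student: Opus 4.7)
The plan is to handle the four claims in order, since they are all bookkeeping consequences of Axioms~\ref{ax:graph1}--\ref{ax:metricpi} together with Definition~\ref{def:meas}; the thickness axioms~\ref{ax:thickcircht}--\ref{ax:thickcircsetlength} play no role. For the first bullet, note that $X_i'$ is a subdivision of $X_i$, so the two share the same underlying metric space and $X_i = \bigcup_{e' \in E(X_i')} e'$; Axiom~\ref{ax:metricpi} asserts that $\pi_i^{i+1}$ acts as the identity on each $e' \in E(X_i') \sbs E(X_{i+1})$, and taking the union yields $\pi_i^{i+1}\res{X_i} = id_{X_i}$. For the second bullet, direction preservation of $\pi_i^{i+1}$ reduces to an edge-by-edge check: on $e' \in E(X_i')$ it is the identity by~\ref{ax:metricpi}, and on $e'_\op$, \ref{ax:graph2}\ref{subax:graph2opintcirc} ensures $e'$ and $e'_\op$ share the same source and sink while~\ref{ax:metricpi} gives an isometric collapse, so direction is preserved in either case. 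The isometry claim for $\pi_i^{i+1}$ restricted to a directed edge path $P$ then follows by applying~\ref{ax:metricd} to both $P \sbs X_{i+1}$ and its image $\pi_i^{i+1}(P) \sbs X_i$: each is isometric to a compact interval, and since $\pi_i^{i+1}$ is isometric on each edge and respects source/sink vertices, the two intervals have equal total length and the induced map is length-preserving. A length-preserving surjection between two compact intervals of equal length is automatically an isometry, and composition gives the corresponding statements for $\pi^j_i$.

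For the third bullet, I would induct on $i$: $\mu_0$ is length measure on $X_0 = [0,1]$ with density $1$. For the inductive step, Definition~\ref{def:meas} declares $\mu_{i+1}$ to be a constant multiple of length measure on each edge of $X_{i+1}$. If $e' \in E(X_i')$ satisfies $e'_\op = e'$ (interval case), then $\mu_{i+1}(e') = \mu_i(e')$ and $|e'|$ is unchanged, so the density equals that of $\mu_i\res{e'}$, which is $\leq 1$ by induction. If $e'_\op \neq e'$ (circle case),~\ref{ax:metricpi} gives $|e'_\op| = |e'|$ while Definition~\ref{def:meas} gives $\mu_{i+1}(e') = \mu_{i+1}(e'_\op) = \tfrac{1}{2}\mu_i(e')$; the two edges of the circle therefore carry the same density $\leq \tfrac{1}{2}$, so $\mu_{i+1}$ restricted to $e' \cup e'_\op$ is a single constant multiple of length measure. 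For the fourth bullet, I would verify $(\pi_i^{i+1})_\#(\mu_{i+1}) = \mu_i$ edge by edge on $X_i$: combining the third bullet with the edgewise isometric structure of $\pi_i^{i+1}$ shows that the pushforward of $\mu_{i+1}\res{(\pi_i^{i+1})^{-1}(e')}$ is a constant multiple of length measure on $e'$, and the total-mass computation $\mu_{i+1}(e') = \mu_i(e')$ (interval case) or $\mu_{i+1}(e') + \mu_{i+1}(e'_\op) = \mu_i(e')$ (circle case) pins this constant down to match that of $\mu_i\res{e'}$.

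The closest thing to an obstacle is the isometry claim for directed edge paths in the second bullet:~\ref{ax:metricpi} only provides edgewise isometry, so one needs the global geodesicity supplied by~\ref{ax:metricd} to upgrade this to an isometry along the entire path. Once one notices that a length-preserving surjection between two compact intervals of equal length is automatically an isometry, even this step is immediate, and the whole proposition amounts to unwinding definitions.
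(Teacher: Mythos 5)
Your proof is correct, and it covers ground the paper explicitly leaves unjustified: the paper introduces this proposition with ``We begin with a proposition that lists, \emph{without proof}, some elementary consequences of the axioms,'' so there is no argument in the source to compare against. Your decomposition into the four bullets and your reliance solely on \ref{ax:graph1}--\ref{ax:metricpi} and Definition~\ref{def:meas} is exactly the right scope, and the third and fourth bullets are handled cleanly by the density computation plus the total-mass bookkeeping.

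One phrasing to tighten in the second bullet: the bare assertion that ``a length-preserving surjection between two compact intervals of equal length is automatically an isometry'' is false without a monotonicity hypothesis (a continuous arc-length-preserving surjection can fold). What you actually have, and what makes the argument go through, is that $\pi^{i+1}_i$ restricted to a directed edge path $P$ is \emph{edgewise isometric and direction-preserving}, hence monotone in the arc-length parametrizations supplied by~\ref{ax:metricd}; a monotone edgewise-isometric map fixing the initial endpoint of the parametrization is forced to be the identity on arc-length parameters, hence an isometry. It is also worth recording explicitly (you use it implicitly) that a directed edge path in $X_{i+1}$ cannot contain both $e'$ and $e'_\op$, since both edges run from the same source to the same sink and a directed path cannot revisit the source --- this is what guarantees $\pi^{i+1}_i$ is injective on $P$ so that $\pi^{i+1}_i(P)$ really is a directed edge path of the same combinatorial length.
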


\begin{definition} \label{def:deltaE}
For any $i \geq 0$, define $\Delta_i^E := \min_{e \in E(X_i)} |e|$ and \\
$\boldsymbol{\delta^E_i} := \max_{e' \in E(X_i')}\frac{|e'|}{\Delta_i^E}$. The maximum is well-defined because each graph has finitely many edges.
\end{definition}

\begin{definition} \label{def:edges}
For any $j \geq i \geq 0$ and $x \in X_j$, define $\boldsymbol{e_i(x)}$ and $\boldsymbol{e_i'(x)}$ to be edges of $X_i$ and $X_i'$, respectively, containing $\pi^j_i(x)$. These edges are unique except when $x$ is a vertex, and the set of vertices form a measure 0 set.
\end{definition}

\begin{lemma} \label{lem:fiberdiam}
If $(\delta_i')_{i=0}^\infty$ is a positive decreasing sequence with $\delta_0' \leq \frac{1}{2}$, and if $\delta_i^E \leq \delta_i'$, then for any $j \geq i \geq 0$ and $x_i \in X_i$,
$\text{diam}((\pi_i^j)^{-1}(x_i)) \leq 2\delta_i'|e_i(x_i)|$.
\end{lemma}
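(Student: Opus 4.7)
My approach would be induction on $j - i$, exploiting the telescoping structure of the nested inverse system. The base case $j = i$ is trivial since $(\pi_i^i)^{-1}(x_i) = \{x_i\}$ has diameter zero.

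For the inductive step, I would track an arbitrary $y \in (\pi_i^j)^{-1}(x_i)$ via its intermediate projections $y_l := \pi_l^j(y) \in X_l$ for $l = i, \dots, j$, noting $y_i = x_i$ and $y_j = y$. At each level, Axiom~\ref{ax:graph2}\ref{subax:graph2opintcirc} combined with \eqref{eq:edgediam} yields $d(y_{l+1}, y_l) \leq |e_l'(y_l)|$, and the identity $|e_l(y_l)| = |e_{l-1}'(y_{l-1})|$ (from the same axiom, since the edge of $X_l$ containing $y_l$ is either a subedge of $X_{l-1}'$ or its opposite, both of equal length) combined with $|e_l'(y_l)| \leq \delta_l^E|e_l(y_l)|$ from Definition~\ref{def:deltaE} allows me to iterate down to level $i$ and obtain the telescoping bound
\begin{equation*}
d(y, x_i) \;\leq\; \sum_{l=i}^{j-1}|e_l'(y_l)| \;\leq\; \delta_i^E|e_i(x_i)|\sum_{k=0}^{\infty}\prod_{m=i+1}^{i+k}\delta_m^E.
\end{equation*}
Since $(\delta_m')$ is decreasing with $\delta_0' \leq \frac{1}{2}$ and $\delta_m^E \leq \delta_m' \leq \frac{1}{2}$ for every $m \geq i+1$, each factor in the inner products is at most $\frac{1}{2}$, so the inner sum is dominated by the geometric series $\sum_{k\geq 0}2^{-k} = 2$. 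This yields $d(y, x_i) \leq 2\delta_i^E|e_i(x_i)| \leq 2\delta_i'|e_i(x_i)|$ uniformly in $y$.

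Since $x_i$ itself lies in the fiber, this already bounds the diameter by $4\delta_i'|e_i(x_i)|$ via the triangle inequality; recovering the sharper constant $2$ requires a structural observation. Each point of $(\pi_i^j)^{-1}(x_i)$ is specified by a subset $S \subseteq \{i+1, \dots, j\}$ recording at which levels one takes the opposite point, and the ambient distance between two fiber points is controlled by the $L^1$-sum of flip-sizes over the symmetric difference of their specifying subsets, with $x_i$ corresponding to $S = \emptyset$---a corner rather than a centroid of this ``cube''. Consequently the maximum pairwise distance in the fiber coincides with $\max_y d(y, x_i)$, yielding $\mathrm{diam}((\pi_i^j)^{-1}(x_i)) \leq 2\delta_i'|e_i(x_i)|$. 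The principal technical obstacle is justifying this cube estimate rigorously; concretely one bounds $d(y,z)$ for $y, z$ in the fiber by constructing a path through their lowest common ancestor in the fiber tree and reruns the same telescoping computation from that ancestor instead of from $x_i$, which absorbs the extra factor of $2$ into the product structure at the ancestor's level.
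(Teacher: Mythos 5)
Your telescoping computation is exactly the paper's: track $y_l := \pi_l^j(y)$, bound $d(y_l, y_{l+1}) \leq |e_l'(y_l)|$, chain the edge-length contractions down to level $i$, and sum a dominating geometric series to get $d(y, x_i) \leq 2\delta_i'|e_i(x_i)|$. So the core of your proof matches the paper's verbatim. Where you diverge is that you explicitly notice that this estimate, taken literally, only bounds the radius of the fiber about $x_i$, not its diameter — and on that point you are actually more careful than the paper, whose proof likewise ends after bounding $d(x_i, x_j)$ and then simply asserts the diameter bound. All downstream uses of the lemma (notably \eqref{eq:fiberdiam} and the estimate $d(y, y_i) \leq 2\delta_i' r_i$ in the proof of Theorem \ref{thm:weakRNPdiff}) only need the radius bound, so this is immaterial in practice, but the literal claim in the lemma requires more.

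Where your proposal has a genuine gap is the ``cube'' argument you offer to recover the constant $2$. The fiber points are indeed indexed by subsets $S \subseteq \{i+1,\dots,j\}$ of flip levels, but the size of the flip at level $k$ is $|e_k'(y_k)|$, which depends on where $y_k$ sits — that is, on $S \cap \{i+1,\dots,k-1\}$. The weights are path-dependent, not coordinate-fixed, so the structure is not a weighted Hamming cube and the identity $\mathrm{diam} = \max_y d(y, x_i)$ does not follow. Your fallback lowest-common-ancestor argument also does not close the gap: if $y$ and $z$ already disagree at level $i+1$, the LCA is $x_i$ itself and no contraction factor is gained. Carrying out the telescoping between two arbitrary fiber points (one term $|e_i'(x_i)|$ at level $i$ plus two terms per level $k \geq i+1$) gives $\mathrm{diam} \leq \delta_i'\tfrac{1+\delta_i'}{1-\delta_i'}|e_i(x_i)| \leq 3\delta_i'|e_i(x_i)|$ under the hypothesis $\delta_i' \leq \tfrac12$, which is the best one seems to get this way. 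In short: your radius estimate is correct and is the paper's proof, but the attempt to upgrade it to the stated diameter bound with constant $2$ is not rigorous; neither is the paper's, and the honest conclusion from this argument is a diameter bound with constant $3$ (or the radius bound with constant $2$), either of which suffices for every subsequent application.
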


\begin{proof}
Assume $\delta_i'$, $\delta^E_i$, and $x_i$ are as above. Let $x_j \in (\pi_i^j)^{-1}(x_i)$, and for $k=i \dots j$, set $x_k := \pi^j_k(x_j)$. By \eqref{eq:edgediam}, $d(x_k,x_{k+1}) \leq |e_k'(x_k)|$. By a repeated application of the definition of $\delta^E_i$, we have $|e_k'(x_k)| \leq \delta^E_i \cdot \delta^E_{i+1} \cdot \dots \delta^E_{k} |e_i(x_i)| \leq (\delta_i')^{k+1-i} |e_i(x_i)|$, where the least inequality holds since $\delta^E_k \leq \delta_k'$ and $\delta_k'$ is decreasing. Then we have $d(x_i,x_j) \leq \sum_{k=i}^{j-1} d(x_k,x_{k+1}) \leq \left( \sum_{k=i}^{j-1} (\delta_i')^{k+1-i} \right)|e_i(x_i)| \leq 2\delta_i'|e_i(x_i)|$, where the last inequality holds since $\delta_i' \leq \delta_0' \leq \frac{1}{2}$.
\end{proof}

\begin{definition} \label{def:deltad}
For any $i \geq 0$, $e \in E(X_i)$ and $e' \in E(X_i')$ with $e'$ a nonterminal subedge of $e$, define $\Delta_i^d(e,e') := d(e',X_i \setminus e)$. This is positive by compactness and since $e'$ belongs to the interior of $e$ (since it is nonterminal). Define $\Delta_i^d(e)$ to be the minimum of $\Delta_i^d(e,e')$ over all nonterminal $e' \sbs e$, and define $\Delta_i^d$ to be the minimum of $\Delta_i^d(e)$ over all $e \in E(X_i)$. Define $\boldsymbol{\delta_i^d} := \max_{e'} \frac{|e'|}{\Delta_i^d}$, where the max is over all nonterminal edges $e' \in E(X_i')$.
\end{definition}

\begin{lemma} \label{lem:piLipbnd}
If $\delta^d_i < \frac{1}{2}$ and $\Pi_{i=0}^\infty \frac{1}{1-2\delta^d_i} \leq L$, then $\Lip{\pi^{j}_i} \leq \Pi_{k=i}^{j-1} \Lip{\pi^{k+1}_k} \leq \Pi_{i=0}^\infty \frac{1}{1-2\delta^d_i} \leq L$ for any $j \geq i \geq 0$.
\end{lemma}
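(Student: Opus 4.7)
The plan is to reduce the statement to the per-level Lipschitz bound $\Lip{\pi^{k+1}_k} \leq \frac{1}{1 - 2\delta^d_k}$ for each $k$; the first displayed inequality is then submultiplicativity of $\Lip$ under the composition $\pi^j_i = \pi^{i+1}_i \comp \dots \comp \pi^{j}_{j-1}$, and the last inequality is just the hypothesized product bound.

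To establish the per-level bound, fix $x, y \in X_{k+1}$ and set $x' := \pi^{k+1}_k(x)$, $y' := \pi^{k+1}_k(y)$. By symmetry I reduce to the case $x \in e'_\op$ for some nonterminal $e' \in E(X_k')$ contained in some $e \in E(X_k)$, with $y$ either equal to $y'$ or lying on $f'_\op$ for a nonterminal $f' \in E(X_k')$. The argument splits on whether $y' \in e$. In the ``far'' case $y' \notin e$, Definition \ref{def:deltad} gives $d(x', y') \geq \Delta^d_k(e,e') \geq \Delta^d_k$, while \eqref{eq:edgediam} yields $d(x, x^\op) \leq |e'| \leq \delta^d_k \Delta^d_k \leq \delta^d_k \, d(x', y')$ and analogously for $d(y, y^\op)$; the triangle inequality $d(x, y) \geq d(x', y') - d(x, x^\op) - d(y, y^\op)$ then produces the desired factor $1 - 2\delta^d_k$.

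In the ``near'' case $y' \in e$, whenever there exists a directed edge path $P$ in $X_{k+1}$ through both $x$ and $y$ --- necessarily using $e'_\op$ in place of $e'$ --- axiom \ref{ax:metricd} applied to $P$, together with axiom \ref{ax:metricpi} (which makes $\pi^{k+1}_k|_P$ a piecewise isometry onto a directed edge path in $X_k$ through $x'$ and $y'$), forces the exact equality $d(x, y) = d(x', y')$. The sole configuration in which no such common $P$ exists is the ``opposite arc'' case where $y$ lies on the non-opposite arc $e' \subseteq X_k'$: any directed edge path through $y$ uses $e'$, not $e'_\op$, and so cannot contain $x$. I anticipate this case to be the main obstacle, and I would resolve it by pivoting the triangle inequality through the shared endpoint $v := \text{src}(e')$. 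Since $\{x, v\} \subseteq e'_\op$ and $\{y, v\} \subseteq e'$ are each pairs lying on a single directed edge, axiom \ref{ax:metricd} identifies $d(x, v)$ and $d(y, v)$ with their arc-length positions from $v$; by the isometric collapsing in axiom \ref{ax:metricpi}, the arc position of $x$ on $e'_\op$ matches that of $x^\op$ on $e'$. Hence $d(x, y) \geq |d(x, v) - d(y, v)| = d(x^\op, y) = d(x', y')$, with no loss at all. A naive triangle through $x^\op$ would fail here because $d(x, x^\op)$ can be comparable to $d(x', y')$ (both bounded only by $|e'|$), whereas the vertex pivot exploits the rigidity that $e'$ and $e'_\op$ isometrically embed the same interval starting at $v$.
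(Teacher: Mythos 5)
Your proof is correct and follows essentially the same strategy as the paper: reduce to the single-step bound $\Lip{\pi^{k+1}_k} \le \frac{1}{1-2\delta^d_k}$, get the factor $1-2\delta^d_k$ in the ``far'' case by bounding $d(x,x')$ and $d(y,y')$ via $\delta^d_k \Delta^d_k \le \delta^d_k\, d(x',y')$, and handle the circle (``opposite arc'') case by pivoting the triangle inequality through a shared vertex of $e'$ and $e'_\op$, using that $\pi^{k+1}_k$ is arc-length preserving on each edge. The only notable difference is cosmetic: you fix $v = \mathrm{src}(e')$ and use $d(x,y)\ge |d(x,v)-d(y,v)|$ directly, which is marginally tidier than the paper's choice of the vertex on a length-shortest path combined with a WLOG; your case split on whether $y'\in e$ (after reducing WLOG to $x\in e'_\op$ nonterminal) also folds the paper's three terminal/nonterminal subcases into one bound, but the underlying estimates are identical.
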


\begin{proof}
It suffices to prove $\Lip{\pi^{k+1}_k} \leq \frac{1}{1-2\delta^d_k}$. Let $x_{k+1},y_{k+1} \in X_{k+1}$, and set $x_k := \pi^{k+1}_k(x_{k+1})$, $y_k := \pi^{k+1}_k(y_{k+1})$. We need to show that $d(x_{k+1},y_{k+1}) \geq (1-2\delta_k^d)d(x_{k},y_{k})$. We consider two cases; either $x_k$ and $y_k$ belong to the same edge of $X_k$, or they belong to different edges. Assume they belong to the same edge. Then there are again two cases; either $x_{k+1}$ and $y_{k+1}$ belong to opposite edges of a circle, or they belong to a directed edge path. The conclusion holds in this second case since the map $\pi_k^{k+1}$ is an isometry on directed edges paths (so we get an ever better bound of 1). Now suppose they belong to opposite edges of a circle. Without loss of generality, assume $y_{k+1} \in e'$ and $x_{k+1} \in e'_\op$ for some $e' \in E(X_k')$. Then $y_{k+1} = y_k$, and a shortest path between them, $[x_{k+1},y_{k+1}]$, passes through one of the vertices of the circle, say $v$. Then since $x_k$ and $v$ belong to an edge, $d(x_k,v) = |v-x_k|$ (recall that $|p-q|$ denotes the distance with respect to the length metric), and since $v$ and $y_k$ belong to an edge, so $d(v,y_k) = |y_k-v|$. Without loss of generality, assume $|v-x_k| \leq |y_k-v|$. This implies $x_k \in [v,y_k]$, in turn implying $|x_k-v| + |y_k-x_k| = |y_k-v|$. Then we have
$$d(x_{k+1},y_{k+1}) \geq d(v,y_{k+1}) - d(x_{k+1},v) = d(v,y_k) - d(v,x_k)$$
$$= |y_k-v| - |x_k-v| = |y_k-x_k| = d(y_k,x_k)$$ 
Our conclusion holds in this case (again with an ever better bound of 1).

Finally, assume that $x_{k}$ and $y_{k}$ do not belong to the same edge of $X_k$. We consider three cases now: both points belong to a terminal interval of $X_k$, neither point does, or one does and the other does not. Our conclusion holds the first case, since $\pi_{k}^{k+1}$ acts identically on $X_k$ (so $y_{k+1} = y_k$ and $x_{k+1} = x_k$), and terminal intervals belong to $X_k$ by definition. Assume the second case holds. $e_k'(x_k)$ and $e_k'(y_k)$ are nonterminal by assumption. Then by definition of $\delta_k^d$, since $y_k$ and $x_k$ do not belong to the same edge of $X_k$, $|e_k'(y_k)|, |e_k'(x_k)| \leq \delta_k^d d(x_k,y_k)$. Then we have
$$d(x_{k+1},y_{k+1}) \geq d(x_k,y_k) - d(x_{k+1},x_k) - d(y_{k+1},y_k) \geq d(x_k,y_k) - |e_k'(x_k)| - |e_k'(y_k)|$$
$$\geq d(x_k,y_k) - 2\delta_k^d d(x_k,y_k) = (1-2\delta_k^d)d(x_{k},y_{k})$$
And our desired conclusion holds in this case. For the third and final case, assume without loss of generality that $y_k$ belongs to a terminal interval and $x_k$ does not. Then we get $y_{k+1} = y_k$ and $|e_k'(x_k)| \leq \delta_k^d d(x_k,y_k)$. Making the obvious adjustments to the argument above yields
$$d(x_{k+1},y_{k+1}) = d(x_{k+1},y_k) \geq d(x_k,y_k) - d(x_{k+1},x_k) \geq d(x_k,y_k) - |e_k'(x_k)|$$
$$\geq d(x_k,y_k) - \delta_k^d d(x_k,y_k) = (1-\delta_k^d)d(x_{k},y_{k})$$
\end{proof}

\begin{lemma} \label{lem:pi-1Lipbnd}
If $x_{k+1},y_{k+1} \in X_{k+1}$ do not belong to opposite open edges of a circle, then $d(\pi^{k+1}_k(x_{k+1})),\pi^{k+1}_k(y_{k+1})) \geq \frac{1}{1+2\delta_k^d}d(x_{k+1},y_{k+1})$ (loosely, $\pi^{k+1}_k$ collapses circles, but is close to an isometry away from them).
\end{lemma}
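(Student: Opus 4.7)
The plan is to mirror the case analysis in the proof of Lemma \ref{lem:piLipbnd}, but now bounding $d(x_{k+1}, y_{k+1})$ from above by $(1+2\delta_k^d)\,d(x_k, y_k)$, where $x_k := \pi^{k+1}_k(x_{k+1})$ and $y_k := \pi^{k+1}_k(y_{k+1})$. The organizing observation is that whenever $x_{k+1}$ sits on an opposite circle edge $e'(x_k)_{\op}$, axiom \ref{ax:metricpi} says $\pi^{k+1}_k$ restricts to a vertex-fixing isometry of $e'_{\op}$ onto $e'$, and \ref{ax:metricd} applied to the single directed edges $e'$ and $e'_{\op}$ gives $d(x_{k+1}, v) = d(x_k, v)$ for either endpoint $v$ of the circle. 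The same equality holds trivially when $x_{k+1} = x_k$, so it is always available (and symmetrically for $y_{k+1}, y_k$).

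Case 1: $x_k, y_k$ share a common edge $e \in E(X_k)$, so $d(x_k, y_k) = |x_k - y_k|$ by \ref{ax:metricd}. If $e'(x_k) = e'(y_k)$, the hypothesis excluding opposite open edges forces $x_{k+1}, y_{k+1}$ to lie on the same side of the possibly-trivial circle $(\pi^{k+1}_k)^{-1}(e'(x_k))$, and the isometry from \ref{ax:metricpi} immediately yields $d(x_{k+1}, y_{k+1}) \leq d(x_k, y_k)$. If $e'(x_k) \neq e'(y_k)$, I would pick $v$ to be the endpoint of $e'(x_k)$ lying between $x_k$ and $y_k$ along $e$, and $u$ the endpoint of $e'(y_k)$ lying between $y_k$ and $x_k$, and then chain the triangle inequality through $v$ and $u$:
\[
d(x_{k+1}, y_{k+1}) \leq d(x_{k+1}, v) + d(v, u) + d(u, y_{k+1}) = d(x_k, v) + |v-u| + d(u, y_k) = d(x_k, y_k),
\]
using the key observation for the endpoints and \ref{ax:metricd} on $e$ for the middle term.

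Case 2: $x_k, y_k$ lie on different edges of $X_k$. If both $e'(x_k), e'(y_k)$ are terminal subedges, then by \ref{subax:graph2termint} they have no opposite edges, so $x_{k+1} = x_k$ and $y_{k+1} = y_k$, and the bound is trivial. Otherwise, for any nonterminal $e'(x_k)$ one has $|e'(x_k)| \leq \delta_k^d\, d(e'(x_k), X_k \setminus e(x_k)) \leq \delta_k^d\, d(x_k, y_k)$ by Definition \ref{def:deltad} (since $y_k \in X_k \setminus e(x_k)$), and symmetrically for $e'(y_k)$. Combined with $d(x_{k+1}, x_k) \leq |e'(x_k)|$ from \eqref{eq:edgediam}, a plain triangle inequality $d(x_{k+1}, y_{k+1}) \leq d(x_{k+1}, x_k) + d(x_k, y_k) + d(y_k, y_{k+1})$ yields the claim.

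The most delicate step is Case 1 with $e'(x_k) \neq e'(y_k)$: the coarse global bound $|e'| \leq \delta_k^d\, d(x_k, y_k)$ used in Case 2 is unavailable here because $x_k, y_k$ share an $X_k$-edge, so one must route through a carefully chosen circle vertex in order to charge the detour onto $e'_{\op}$ against distance already accumulated in $|x_k - y_k|$. The hypothesis that $x_{k+1}, y_{k+1}$ do not lie on opposite open edges of a circle is precisely what rules out the configuration where $\pi^{k+1}_k$ collapses a sizable $X_{k+1}$-separation to nothing, and without it the inequality cannot possibly hold.
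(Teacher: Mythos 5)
Your proof is correct and follows essentially the same case decomposition as the paper's (same $X_k$-edge, then different $X_k$-edges with terminal/nonterminal subcases), and Case 2 is verbatim the paper's argument. The only divergence is in Case 1 with $e'(x_k) \neq e'(y_k)$: the paper simply asserts that $x_{k+1},y_{k+1}$ lie on a common directed edge path and invokes that $\pi_k^{k+1}$ is an isometry there, whereas you route explicitly through the circle endpoints $v,u$ and chain the triangle inequality --- a more hands-on derivation of the same bound, which fills in a step the paper leaves implicit.
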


\begin{proof}
Let $x_{k+1},y_{k+1} \in X_{k+1}$, and set $x_k := \pi^{k+1}_k(x_{k+1})$, $y_k := \pi^{k+1}_k(y_{k+1})$. As before, there are two cases; either $x_k$ and $y_k$ belong to the same edge of $X_k$, or they belong to different edges. Assume they belong to the same edge. Again, as before, there are two cases; either $x_{k+1}$ and $y_{k+1}$ belong to opposite edges of a circle, or they belong to a directed edge path. The first case doesn't hold by assumption, and the conclusion holds in this second case since the map $\pi_k^{k+1}$ is an isometry on directed edges paths (so we get an ever better bound of 1).

Finally, assume that $x_{k}$ and $y_{k}$ do not belong to the same edge of $X_k$. As before, three cases: both points belong to a terminal interval of $X_k$, neither point does, or one does and the other does not. Our conclusion holds the first case, since $\pi_{k}^{k+1}$ acts identically on $X_k$ (so $y_{k+1} = y_k$ and $x_{k+1} = x_k$, and intervals belong to $X_k$ by definition. Assume the second case holds. $e_k'(x_k)$ and $e_k'(y_k)$ are nonterminal by assumption. Then by definition of $\delta_k^d$, since $y_k$ and $x_k$ do not belong to the same edge of $X_k$, $|e_k'(y_k)|, |e_k'(x_k)| \leq \delta_k^d d(x_k,y_k)$. Then we have
$$d(x_{k+1},y_{k+1}) \leq d(x_k,x_{k+1}) + d(x_k,y_k) + d(y_k,y_{k+1})$$
$$\leq |e_k'(x_k)| + d(x_k,y_k) + |e_k'(y_k)| \leq (1+2\delta_k^d)d(x_k,y_k)$$
And our desired conclusion holds in this case. For the third and final case, assume without loss of generality that $y_k$ belongs to a terminal interval and $x_k$ does not. Then we get $y_{k+1} = y_k$ and $|e_k'(x_k)| \leq \delta_k^d d(x_k,y_k)$. Making the obvious adjustments to the argument above yields
$$d(x_{k+1},y_{k+1}) = d(x_{k+1},y_k) \leq d(x_k,x_{k+1}) + d(x_k,y_k)$$
$$\leq |e_k'(x_k)| + d(x_k,y_k) \leq (1+\delta_k^d)d(x_k,y_k)$$
\end{proof}

\begin{remark}
Note that since $\frac{1}{1-2\delta} > 1+\delta$, if the hypotheses of Lemma \ref{lem:pi-1Lipbnd} are satisfied, then
\begin{equation} \label{eq:pi-1Lipbndprod}
\Pi_{k=0}^\infty (1+\delta_k^d) \leq L
\end{equation}
\end{remark}

\subsection{Existence of Inverse System}
Let $M$ be a metric space.

\begin{theorem} \label{thm:existinvsys}
If $M$ contains a thick family of geodesics, then for any positive sequence $(\delta_i')_{i=0}^\infty$, $M$ contains a thick inverse system with $\delta_i^E,\delta_i^d \leq \delta_i'$ for every $i$ (see Definitions \ref{def:thickinvsys}, \ref{def:deltaE}, and \ref{def:deltad}).
\end{theorem}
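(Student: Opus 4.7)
I would construct the system by induction on $i$, using the thickness of $\Gamma$ at each stage to produce opposite edges. After rescaling so that $\Gamma$'s common domain is $[0,1]$, begin with $X_0=[0,1]$ identified with the image of an arbitrarily chosen $\gamma_0 \in \Gamma$. Maintain the inductive hypothesis that every directed edge path from $0$ to $1$ in $X_i$ is the image of some geodesic in $\Gamma$; by concatenation closure, this means each individual edge of $X_i$ is a sub-geodesic of some element of $\Gamma$.

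To pass from $X_i$ to $X_i'$, pick vertices on each edge densely enough that the maximum subedge length is at most $\delta_i' \cdot \Delta_i^E$ (forcing $\delta_i^E \leq \delta_i'$), and choose the terminal subedges short enough so that every nonterminal subedge lies deep in the interior of its containing edge, forcing $\delta_i^d \leq \delta_i'$. Both bounds are routine since $X_i$ has finitely many edges of positive length.

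To pass from $X_i'$ to $X_{i+1}$, for each edge $e$ of $X_i$ corresponding to a sub-geodesic $\gamma_e|_{[a_e,b_e]}$, I would invoke the $\alpha$-thickness of $\Gamma$ on $\gamma_e$ with finite subdivision $\{t_j\}$ containing the new vertices of $X_i'$ inside $e$, the endpoints $a_e$ and $b_e$, and a fine net in $[0,1]\setminus[a_e,b_e]$. This produces $\tilde{\gamma}_e \in \Gamma$ agreeing with $\gamma_e$ at each $t_j$ and having total deviation at least $\alpha$; by concatenation closure applied at $a_e$ and $b_e$, I may replace $\tilde{\gamma}_e$ outside $[a_e,b_e]$ by $\gamma_e$, so without loss of generality $\tilde{\gamma}_e=\gamma_e$ outside $e$. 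The candidate opposite edge $e'_{\text{op}}$ of a nonterminal subedge $e' \subseteq e$ is then the image of the corresponding arc of $\tilde{\gamma}_e$, and I would filter, keeping only those $e'$ on which the pointwise deviation is at least a fixed fraction of $|e'|$, so that \ref{ax:thickcircht} holds with a uniform $\alpha$.

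Axioms \ref{ax:graph1}, \ref{ax:graph2}, \ref{ax:metricpi} hold by direct construction; \ref{ax:metricd} follows from concatenation closure, since each directed $0$-to-$1$ edge path in $X_{i+1}$ is a piecewise concatenation of pieces of various $\gamma_e$'s and $\tilde{\gamma}_e$'s and so is a geodesic in $\Gamma$, hence isometric to an interval on its domain. The filter gives \ref{ax:thickcircht}. \textbf{The main obstacle} is \ref{ax:thickcircsetlength} (together with its compatibility with the filter used for \ref{ax:thickcircht}): the thickness of $\Gamma$ gives a uniform lower bound $\alpha$ on the total deviation along all of $\gamma_e$, but to secure retained circle length on each edge $e$ proportional to $|e|$ (so that totals along any $0$-to-$1$ path in $X_i'$ have an $i$-independent lower bound $\beta$), one must control how much of this deviation is concentrated inside the possibly short interval $[a_e,b_e]$. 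The concatenation-replacement step discards the deviation outside, which could in principle swallow almost all of $\alpha$ when $b_e-a_e$ is small. I expect to resolve this by proving a scale-invariant local version of thickness for the sub-family $\Gamma_{[a_e,b_e],\gamma_e}$ of geodesics agreeing with $\gamma_e$ off $[a_e,b_e]$, or by exploiting the freedom to choose $\delta_i'$ decaying very rapidly and absorbing the outside-deviation losses stage-by-stage via an iterative renormalization of thickness.
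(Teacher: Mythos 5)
Your high-level construction (induct on $i$, maintain the hypothesis that every directed $0$-$1$ edge path in $X_i$ parametrizes to an element of $\Gamma$, invoke thickness to manufacture opposite edges, filter by pointwise deviation to secure \ref{ax:thickcircht}) matches the paper's, and you correctly flag \ref{ax:thickcircsetlength} as the crux. But the two resolutions you sketch would not work, and the paper's actual mechanism is a different and cleaner idea.

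Your first idea, a scale-invariant thickness of the subfamily $\Gamma_{[a_e,b_e],\gamma_e}$, is false in general: nothing in Definition \ref{def:thickfamily} prevents all of the forced deviation from living entirely outside a given short window $[a_e,b_e]$, so after you concatenate back to $\gamma_e$ off $e$ you may be left with no deviation on $e$ at all for \emph{that particular} $\tilde\gamma_e$. Your second idea (absorb losses by taking $\delta_i'$ to decay rapidly) cannot give the $i$-independent $\beta$ that \ref{ax:thickcircsetlength} demands, because the losses you are proposing to absorb are per-edge and do not shrink with $i$.

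The paper sidesteps edge-by-edge control entirely. It never tries to guarantee deviation proportional to $|e|$ on any single $e$. Instead, for each edge $e$ and each seed partition $T^e$ it fixes a refinement $T^e_{sup/2}$ and a competitor $\tilde\gamma^e_{sup/2}$ whose deviation is at least \emph{half the supremum of the deviation over all refinements of $T^e$}. The key estimate is then global along the whole $0$-$1$ path $P$: since $\gamma_e$ extends (by the inductive hypothesis) to a geodesic $\gamma\in\Gamma$ parametrizing $P$, applying $\alpha'$-thickness to $\gamma$ with the combined partition $\bigcup_e T^e$ produces a single competitor whose deviation is $\geq \alpha'$, and distributing this deviation over the edges shows $\sum_{e\subseteq P}\sup_{T'\supseteq T^e}\dev{T'}\geq\alpha'$, hence $\sum_{e\subseteq P}\dev{T^e_{sup/2},\tilde\gamma^e_{sup/2}}\geq \alpha'/2$ \emph{for any choice of $T^e$}. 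From there the classification of subedges by whether $\height{e^t}\geq \alpha|e^t|$ with $\alpha:=\alpha'/4$ (your filter, with the specific threshold) gives $|\cup E^P_{\geq\alpha}|\geq \alpha'/4 =: 2\beta$, and trimming off the terminal subedges, which have total length $\leq\beta$ per path by construction, leaves the circle-set length $\geq\beta$. In short: the missing idea is not a localized thickness but the near-optimality (``$\mathrm{sup}/2$'') bookkeeping, which converts the global thickness bound into a bound that survives your filtering. Minor additional gap: you also need to refine $T^e_{sup/2}$ and modify $\tilde\gamma^e_{sup/2}$ (as in the paper's step giving \eqref{eq:sup/2prop4}) so that on each subinterval the two curves either coincide or have disjoint open images; otherwise the proposed $e'_{\op}$ need not have interior disjoint from $X_{i+1}$ and \ref{ax:graph2}\ref{subax:graph2opintcirc} fails.
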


\begin{proof}
Assume $M$ contains an $\alpha'$-thick family of geodesics $\Gamma$ for some $\alpha' > 0$. Let $(\delta_i')_{i=0}^\infty$ be a positive sequence. We'll construct the inverse sequence $X_0 \overset{\leftarrow}{\sbs} X_1 \overset{\leftarrow}{\sbs} \dots$ inductively. Let $\gamma$ be any element of $\Gamma$, and set $X_0$ equal to the image of $\gamma$ in $M$. Equip $X_0$ with the necessary graph structure. Assume $X_i$, $X_{i-1}$, and $\pi^{i}_{i-1}$ have been constructed for some $i \geq 0$, satisfy the Graph, Metric, and Thickness Axioms, and also satisfy the additional hypothesis that the geodesic parametrization of each directed 0-1 edge path belongs to $\Gamma$. For each edge $e \in E(X_i)$, let $v_0(e)$ and $v_1(e)$ denote the source and sink vertices of $e$, respectively. $e$ is mapped isometrically onto $I_e := [d(0,v_0(e)),d(v_1(e),1)]$ via $\pi^i_0$. Denote the inverse of this map $\gamma_e: I_e \to e \sbs X_i$. Note that, for any geodesic parametrization $\gamma$ of a 0-1 edge path whose image contains $e$, we must have $\gamma\res{I_e} = \gamma_e$, so $\gamma_e$ extends to a geodesic parametrization of a directed 0-1 edge path.

Now we provide a more quantitative reformulation of Definition \ref{def:thickfamily}. By a \emph{partition} $T$ of an interval $[a,b]$, we mean a finite subset of $[a,b]$ equipped with the order induced from $[a,b]$, such that the least element is $a$ and the greatest element is $b$. For any $t \in T$ other than $b$, we define $t^+$ to be the immediate successor of $t$, and we simply define $b^+ := b$, and for any $t \in T$ other than $a$, we define $t^-$ to be the immediate predecessor of $t$, and we simply define $a^- := a$. For each partition $T^e$ of $I_e$, and $\tilde{\gamma}^e \in \Gamma$ with $\tilde{\gamma}^e\res{T^e} \equiv \gamma_e\res{T^e}$ define the \emph{deviations} of $(T^e,\tilde{\gamma}^e)$ and $T^e$, respectively:
$$\dev{T^e,\tilde{\gamma}^e} := \sum_{t \in T^e} \max_{s \in [t,t^+]} d(\gamma_e(s),\tilde{\gamma}^e(s))$$
$$\dev{T^e} := \sup_{\substack{\tilde{\gamma}^e \in \Gamma \\ \tilde{\gamma}^e\res{T^e} \equiv \gamma_e\res{T^e}}} \dev{T^e,\tilde{\gamma}^e}$$
Note that $\dev{T^e} \leq |e|$.

For any fixed partition $T^e$ of $I_e$, let $T^e_{sup/2}$ and $\tilde{\gamma}^e_{sup/2}$ denote a partition of $I_e$ and a geodesic in $\Gamma$, respectively, with
\begin{equation} \label{eq:sup/2prop1}
T^e_{sup/2} \sps T^e
\end{equation}
\begin{equation} \label{eq:sup/2prop2}
\tilde{\gamma}^e_{sup/2}\res{T^e_{sup/2}} \equiv \gamma_e\res{T^e_{sup/2}}
\end{equation}
\begin{equation} \label{eq:sup/2prop3}
\dev{T^e_{sup/2},\tilde{\gamma}^e_{sup/2}} \geq \frac{1}{2} \sup_{T' \sps T^e} \dev{T'}
\end{equation}

Now, we can always choose $T^e_{sup/2}$ and $\tilde{\gamma}^e_{sup/2}$ such that the above properties remain true, and also such that for every $t \in T^e_{sup/2}$,
\begin{equation} \label{eq:sup/2prop4}
\gamma_e\res{[t,t^+]} \equiv \tilde{\gamma}^e_{sup/2}\res{[t,t^+]} \:\:\:\: \text{or} \:\:\:\: \gamma_e((t,t^+)) \cap \tilde{\gamma}^e_{sup/2}((t,t^+)) = \emptyset
\end{equation}
To see this, take any $T^e_{sup/2}$ and $\tilde{\gamma}^e_{sup/2}$ as above, and let $t \in T^e_{sup/2}$. If \\
$\max_{s \in [t,t^+]} d(\gamma_e(s),\tilde{\gamma}^e(s)) = 0$, then $\gamma_e$ and $\tilde{\gamma}^e_{sup/2}$ agree on all of $[t,t^+]$ and we are done. Otherwise, let $s_{max} = \text{argmax}_{s \in [t,t^+]} d(\gamma_e(s),\tilde{\gamma}^e(s))$. Then by continuity, there exists a largest, nonempty open subinterval $(a,b)$ of $[t,t^+]$ containing $s_{max}$ such that $\gamma_e((a,b)) \cap \tilde{\gamma}^e_{sup/2}((a,b)) = \emptyset$. Since it is the largest, $\gamma_e(a) = \tilde{\gamma}^e_{sup/2}(a)$ and $\gamma_e(b) = \tilde{\gamma}^e_{sup/2}(b)$. We add these new points $a$ and $b$ to the partition $T^e_{sup/2}$, and modify $\tilde{\gamma}^e_{sup/2}$ so that it agrees with $\gamma_e$ on $[t,a] \cup [b,t^+]$, and remains unchanged on $[a,b]$. This new curve still belongs to $\Gamma$ because $\Gamma$ is concatenation closed. It is clear that \eqref{eq:sup/2prop1}, \eqref{eq:sup/2prop2}, and \eqref{eq:sup/2prop3} remain valid, and that we gain \eqref{eq:sup/2prop4}.

We use the partition $T^e_{sup/2}$ of $I_e$ to subdivide $e$ into smaller edges by taking the image of $T^e_{sup/2}$ under $\gamma_e$ to be new vertices. Each new subedge equals $\gamma_e([t,t^+])$ for a unique $t \in T^e_{sup/2}$. Denote this edge $e^t$, and recall the \emph{height} of $e^t$, defined in the Thickness Axioms,
$$\height{e^t} = \max_{s \in [t,t^+]} d(\gamma_e(s),\tilde{\gamma}^e_{sup/2}(s))$$
Set $\alpha := \frac{\alpha'}{4}$, and split the new subedges of $e$ up into two groups, $E^e_{< \alpha}$ and $E^e_{\geq \alpha}$, where $e^t \sbs e$ belongs to $E^e_{< \alpha}$ if $\height{e^t} < \alpha|e^t|$ and $e^t$ belongs to $E^e_{\geq \alpha}$ if $\height{e^t} \geq \alpha|e^t|$. Name the collection of corresponding time intervals $(T^e_{sup/2})_{< \alpha}$ and $(T^e_{sup/2})_{\geq \alpha}$.

It follows from Definition \ref{def:thickfamily} and the observation that $\gamma^e$ extends to a geodesic in $\Gamma$, that for any 0-1 directed edge path $P$, and any choice of partition $T^e$ for each $e \sbs P$,
$$\sum_{e \sbs P} \dev{T^e_{sup/2},\tilde{\gamma}^e_{sup/2}}  \geq \frac{\alpha'}{2}$$
It follows from this that
$$\frac{\alpha'}{2} \leq \sum_{e \sbs P} \left( \sum_{e^t \in E^e_{< \alpha}} \height{e^t} + \sum_{e^t \in E^e_{\geq \alpha}} \height{e^t} \right)$$
$$\leq \sum_{e \sbs P} \left( \sum_{e^t \in E^e_{< \alpha}} \alpha|e^t| + \sum_{e^t \in E^e_{\geq \alpha}} |e^t| \right)$$
$$\leq \sum_{e \sbs P} \left( \alpha|e| + \sum_{e^t \in E^e_{\geq \alpha}} |e^t| \right)$$
$$= \alpha + \sum_{e \sbs P}\sum_{e^t \in E^e_{\geq \alpha}} |e^t|$$
implying
\begin{equation} \label{eq:circsetlength}
|\cup E^P_{\geq \alpha}| \geq 2\beta
\end{equation}
where $\beta := \frac{\alpha'}{8}$ and $E^P_{\geq \alpha} = \cup_{e \sbs P} E^e_{\geq \alpha}$. Now that the preliminaries have been established, we are ready to choose a specific partition of $e$ and apply the above results.

Set $\Delta_i^E := \min_{e \in E(X_i)} |e|$, and for each $e \in E(X_i)$, subdivide $e$ into three edges $e_0' < e_{mid} < e_1'$ such that $|e_0'| = \min(\frac{\beta}{2}|e|,\delta_i'\Delta_i^E) = |e_1'|$. Set $\Delta_i^d(e) := d(e_{mid},X_i \setminus e)$. Since $e_{mid}$ belongs to the interior of $e$, compactness gives us $\Delta_i^d(e) > 0$. Then set $\Delta_i^d := \min_{e \in E(X_i)} \Delta_i^d(e)$ and $\eps_i := \min(\delta_i'\Delta_i^E,\delta_i'\Delta_i^d)$. Now, for each $e \in E(X_i)$, choose a partition $T^e$ of $I_e = [a,b] = [d(0,v_0(e)),d(v_1(e),1)]$ such that
\begin{equation} \label{eq:termsubedgelength}
a^+ - a =  \min\left(\frac{\beta}{2}|e|,\delta_i'\Delta_i^E\right) = b - b^-
\end{equation}
(this implies $\gamma_e([a,a^+]) = e_0'$, $\gamma_e([b^-,b]) = e_1'$, and $\gamma_e([t,t^+]) \sbs e_{mid}$ for $t \in T^e \setminus \{a,b^-,b\}$) and for any $t \in T^e \setminus \{a,b^-,b\}$
\begin{equation} \label{eq:nontermsubedgelength}
t^+ - t \leq \eps_i
\end{equation}

For each $e \in E(X_i)$, fix $T^e_{sup/2} \sps T^e$ and $\tilde{\gamma}^e_{sup/2}$ as before. As explained in the previous paragraph, $T^e_{sup/2}$ induces a subdivision of $e$. Doing this for each $e$ gives us the total subdivided graph $X_i'$. By \eqref{eq:termsubedgelength} and \eqref{eq:nontermsubedgelength}, any subedge $e^t \sbs e$ satisfies $|e^t| \leq \delta_i' \Delta_i^E$, so $\delta_i^E \leq \delta_i'$, as required. Furthermore, any nonterminal subedge $e^t$ of $e$ is contained in $e_{mid}$, by definition, and so by \eqref{eq:nontermsubedgelength} we get $|e^t| \leq \eps_i \leq \delta_i' \Delta_i^d$, implying $\delta_i^d \leq \delta_i'$, as required.

It remains to construct $X_{i+1}$ and $\pi_i^{i+1}$. We explain how to use segments of the curve $\tilde{\gamma}^e_{sup/2}$ as new edges to add to our graph $X_i'$ to obtain $X_{i+1}$. Let $e \in E(X_i)$. There are three options for a subedge $e' \in E(X_i')$ of $e$: $e'$ is a terminal subedge, (meaning $e' = e_0' = e^t$ or $e_1'^t$ for $t \in \{d(0,v_0(e)),d(v_1(e),1)^-\}$), $e' = e^t$ for some $t \in (T^e_{sup/2})_{\geq \alpha} \setminus \{d(0,v_0(e)),d(v_1(e),1)^-\}$ (meaning $\height{e^t} \geq \alpha|e^t|$), or $e' = e^t$ for some $t \in (T^e_{sup/2})_{< \alpha} \setminus \{d(0,v_0(e)),d(v_1(e),1)^-\}$ (meaning $\height{e^t} < \alpha|e^t|$). In the first two cases, we set $e'_\op = e'$, so that $(\pi^{i+1}_i)^{-1}(e')$ is a circle, and in the third case, set $e'_\op = e^t_\op := \tilde{\gamma}^e_{sup/2}([t,t^+])$, so that the intersection of the interiors of $e^t$ and $e^t_\op$ is empty, $(\pi^{i+1}_i)^{-1}(e')$ is a circle, and $\height{e^t} \geq \alpha|e^t|$. We define $\pi^{i+1}_i$ in the unique way so that \ref{ax:metricpi} holds. It is clear that the Graph Axioms, Metric Axioms, and \ref{ax:thickcircht} hold. Our additional hypothesis that the geodesic parametrization of every 0-1 directed edge path belongs to $\Gamma$ also holds (again using concatenation closed). It remains to verify Axiom \ref{ax:thickcircsetlength}. 

To verify \ref{ax:thickcircsetlength}, we fix a path $P$ and compute $|\cup E_{\text{circ}}(P)|$. For each $e \sbs P$, set $E_{\text{circ}}(e) = \{e' \in E_{\text{circ}}(P): e' \sbs e\}$. Then by \eqref{eq:circsetlength} and \eqref{eq:termsubedgelength},
$$|\cup E_{\text{circ}}(P)| = \sum_{e \sbs P} |\cup E_{\text{circ}}(e)| = \sum_{e \sbs P} \left(|\cup E^e_{\geq \alpha}| - |(\cup E^e_{\geq \alpha}) \cap (e_0' \cup e_1')|\right)$$
$$\geq \sum_{e \sbs P} \left(|\cup E^e_{\geq \alpha}| - |e_0' \cup e_1'|\right) \overset{\eqref{eq:termsubedgelength}}{\geq} \sum_{e \sbs P} \left(|\cup E^e_{\geq \alpha}| - \left(\frac{\beta}{2}|e| + \frac{\beta}{2}|e|\right)\right)$$
$$= |\cup E^P_{\geq \alpha}| - \beta \overset{\eqref{eq:circsetlength}}{\geq} 2 \beta - \beta = \beta$$
\end{proof}

From here till the end of Section \ref{sec:weakRNPdiff}, fix a complete metric space $(M,d)$ containing a thick family of geodesics, a positive sequence $(\delta_i')_{i=0}^\infty$ decreasing to 0 quickly enough so that $\delta_0' < \frac{1}{2}$ and $\Pi_{i=0}^\infty \frac{1}{1-2\delta_i'} \leq L$ for some $L < \infty$ (this also implies $\sum_i \delta_i' < \infty)$, and a thick inverse system afforded to us by the theorem.

\begin{definition}
Denote the closure of $X_{< \infty} := \cup_{i=0}^\infty X_i$ inside $M$ as $\boldsymbol{X_\infty}$. We fix $0 \in I = X_0 \sbs X_\infty$ to be the basepoint. By Lemma \ref{lem:piLipbnd}, the maps $\pi^j_i$ are uniformly $L$-Lipschitz, so we get $L$-Lipschitz extensions $\boldsymbol{\pi^\infty_i}: X_\infty \to X_i$. Summarizing:
\begin{equation} \label{eq:piLipbnd}
\forall j \in \{i,i+1, \dots \infty\}, \:\:\: \Lip{\pi^j_i} \leq L
\end{equation}

We also extend the definitions of $e_i(x)$ and $e_i'(x)$ (see Definition \ref{def:edges}) in the obvious way when $x \in X_\infty$.
\end{definition}

\begin{remark}
By Lemma \ref{lem:fiberdiam}, we get
\begin{equation} \label{eq:fiberdiam}
\forall j \in \{i,i+1, \dots \infty\}, \: x_i \in X_i, \:\:\: \text{diam}((\pi_i^j)^{-1}(x_i)) \leq 2\delta_i'|e_i(x_i)| 
\end{equation}

Since each $(X_i,d)$ is a finite graph, each $(X_i,d)$ is compact and thus totally bounded. Then \eqref{eq:fiberdiam}, together with our choice that $\delta_i' \to 0$, imply $X_{<\infty}$ is totally bounded. Then since $M$ is complete, $X_\infty$ is compact. 

The maps $\pi^\infty_i: X_\infty \to X_i$ are each $L$-Lipschitz and act identically on $X_i \sbs X_\infty$. These two facts imply, for any $p,q \in X_\infty$,
\begin{equation} \label{eq:dconv}
d(p,q) = \lim_{i \to \infty}d(\pi^\infty_i(p),\pi^\infty_i(q))
\end{equation}

This implies that the maps $\pi^\infty_i$ generate the topology on $X_\infty$, i.e., the topology on $X_\infty$ is the weakest one such that each map $\pi^\infty_i$ is continuous. Equivalently, the subalgebra of $C(X_\infty)$ consisting of those continuous functions that factor through some $\pi^\infty_i$ is dense. We denote this subalgebra by $C_{\text{unif}}(X_{< \infty})$. The compatibility condition of the probability measures ($(\pi^{i+1}_i)_\#(\mu_{i+1}) = \mu_i$) gives us a well-defined, bounded, positive linear functional $\lambda_{< \infty}$ on $C_{\text{unif}}(X_{< \infty})$. By density this extends to a unique positive linear functional $\lambda_\infty$ on all of $C(X_\infty)$.
\end{remark}

\begin{definition}
Define $\boldsymbol{\mu_\infty}$ to be the Radon measure representing the linear functional $\lambda_\infty$ on $C(X_\infty)$. $\mu_\infty$ is a probability measure uniquely characterized by:
\begin{equation} \label{eq:measpush}
\forall i \geq 0, \:\:\: (\pi^\infty_i)_\#(\mu_\infty) = \mu_i
\end{equation}
\end{definition}

\begin{remark}
Although we won't make explicit use it, we believe it is worth mentioning the following fact: the metric space $X_\infty$ and maps $(\pi^\infty_i)_{i=0}^\infty$ satisfy the universal property of an inverse limit space. This means that for any metric space $Y$ and uniformly Lipschitz sequence of maps $(f_i)_{i=0}^\infty$, $f_i: Y \to X_i$, there exists a unique Lipschitz map $f_\infty: Y \to X_\infty$ such that $\pi^\infty_i \comp f_\infty = f_i$ for any $i$.
\end{remark}

\section{Asymptotic Local Properties of $(X_i)_{i=0}^\infty$ and Special Subsets of $X_\infty$}
\label{sec:asymprops}


\subsection{Deep Points and their Natural Scales}
Recall the definition of \emph{terminal intervals} of $X_{i+1}$ from Axiom \ref{ax:graph2}\ref{subax:graph2termint}.

\begin{definition}
We define the set of \textbf{deep points}, $\boldsymbol{D}$, to be all those $x \in X_\infty$ such that $\pi^\infty_{i+1}(x)$ eventually (in $i$) does not belong to a terminal interval of $X_{i+1}$. $D$ is a $G_{\delta\sigma}$ (and hence Borel) set.
\end{definition}

\begin{theorem}
$\mu_\infty(D) = 1$.
\end{theorem}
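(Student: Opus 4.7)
The plan is to show that $D^c = \{x : \pi^\infty_{i+1}(x) \text{ lies in a terminal interval for infinitely many } i\}$ has measure zero via a direct Borel--Cantelli argument, using the compatibility \eqref{eq:measpush} and the bound $|e_0'|, |e_1'| \leq \delta_i' \Delta_i^E$ built into the construction of $X_{i+1}$.

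First I would set up the relevant "bad" sets. Let $V_{i+1} \subseteq X_{i+1}$ denote the union of all terminal intervals of $X_{i+1}$, and let
$$B_{i+1} := (\pi^\infty_{i+1})^{-1}(V_{i+1}) \subseteq X_\infty.$$
By \eqref{eq:measpush}, $\mu_\infty(B_{i+1}) = \mu_{i+1}(V_{i+1})$, and clearly $D^c = \limsup_i B_{i+1}$. So it suffices to show $\sum_{i=0}^\infty \mu_{i+1}(V_{i+1}) < \infty$ and apply the Borel--Cantelli lemma.

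Next I would bound $\mu_{i+1}(V_{i+1})$. By Axiom \ref{ax:graph2}\ref{subax:graph2termint}, $V_{i+1}$ is the union over $e \in E(X_i)$ of the two terminal subedges $e_0', e_1'$ of $e$. These are intervals in $X_i$, so they are contained in $X_i \subseteq X_{i+1}$, and since $(e_j')_\op = e_j'$ for terminal edges, the measure recursion from Definition \ref{def:meas} gives $\mu_{i+1}(e_j') = \mu_i(e_j')$ for $j = 0, 1$. By the construction in the proof of Theorem \ref{thm:existinvsys}, $|e_j'| = \min\left(\tfrac{\beta}{2}|e|, \delta_i' \Delta_i^E\right) \leq \delta_i' \Delta_i^E \leq \delta_i'\, |e|$, and since $\mu_i$ restricted to $e$ is a constant multiple of length measure, $\mu_i(e_0' \cup e_1') \leq 2\delta_i' \mu_i(e)$. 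Summing over $e \in E(X_i)$,
$$\mu_{i+1}(V_{i+1}) = \sum_{e \in E(X_i)}\mu_i(e_0' \cup e_1') \leq 2\delta_i' \sum_{e \in E(X_i)} \mu_i(e) = 2\delta_i'.$$

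Finally, recall from the paragraph following Theorem \ref{thm:existinvsys} that the sequence $(\delta_i')_{i=0}^\infty$ was chosen so that $\sum_i \delta_i' < \infty$. Therefore $\sum_i \mu_\infty(B_{i+1}) \leq 2\sum_i \delta_i' < \infty$, and by Borel--Cantelli $\mu_\infty(D^c) = \mu_\infty(\limsup_i B_{i+1}) = 0$, which yields $\mu_\infty(D) = 1$. There is no real obstacle here — the argument is essentially bookkeeping — but the one point that requires care is the identification $\mu_{i+1}(e_j') = \mu_i(e_j')$ for terminal edges, which is precisely why the measure recursion treats intervals and circles asymmetrically.
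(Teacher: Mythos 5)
Your proof is correct and takes essentially the same route as the paper: partition the bad set as a $\limsup$ of preimages of terminal intervals, use the measure recursion $\mu_{i+1}(e_j') = \mu_i(e_j')$ for terminal subedges, bound the total terminal-interval mass at level $i+1$ by $2\delta_i'$, and close with Borel--Cantelli via $\sum_i \delta_i' < \infty$. The one small stylistic divergence is that you invoke the explicit formula $|e_j'| = \min(\tfrac{\beta}{2}|e|, \delta_i'\Delta_i^E)$ from the construction in the proof of Theorem \ref{thm:existinvsys}, whereas the paper works purely from the abstract quantity $\delta_i^E$ of Definition \ref{def:deltaE} (any terminal subedge $e_j'$ satisfies $|e_j'| \leq \delta_i^E \Delta_i^E \leq \delta_i^E |e| \leq \delta_i'|e|$ directly from the definitions, for any thick inverse system with $\delta_i^E \leq \delta_i'$); this avoids peeking into the construction and is the cleaner phrasing, but the estimates and the conclusion are identical.
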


\begin{proof}
Let $e$ be an edge of $X_i$ and $e_0'$ and $e_1'$ its terminal subintervals. By Definition \ref{def:meas} and Definition \ref{def:deltaE}, $\mu_{i+1}(e_0' \cup e_1') = \mu_{i+1}(e_0') + \mu_{i+1}(e_1') \leq 2\delta_i^E\mu_i(e)$. Summing over all $e \in E(X_i)$, we get that the total measure of the union of terminal intervals in $E(X_{i+1})$ is bounded by $2\delta_i^E$. Since $\sum_i \delta^E_i \leq \sum_i \delta_i' < \infty$, Borel-Cantelli implies that the set of $x \in X_\infty$ such that $\pi^\infty_{i+1}(x)$ eventually (in $i$) does not belong to a terminal interval in $X_{i+1}$ has measure 1.
\end{proof}


\subsubsection{Structure of $(\pi^{i}_{i-1})^{-1}(e)$}
We now discuss some geometric properties of \\
$(\pi^{i}_{i-1})^{-1}(e)$. While reading this section, it will be helpful to refer to Figure \ref{fig:ei+1} for a picture of what $(\pi^{i}_{i-1})^{-1}(e)$ typically looks like.

\begin{definition}
Given a deep point or, more generally, a nonvertex $x$ and $i \geq 0$, define $\boldsymbol{r_i(x)} := |e_i(x)|$. We call $r_i(x)$ the sequence of \emph{natural scales} of $X_\infty$ at $x$.
\end{definition}

\begin{lemma} \label{lem:ballcontainedinedge}
For any deep point $x$ and $R \geq 1$, $B^i_{Rr_i(x)}(\pi^\infty_i(x))$ is eventually (in $i$, depending on $x$ and $R$) contained in $(\pi^i_{i-1})^{-1}(e_{i-1}(x))$, where $B^i$ indicates a ball in the space $(X_i,d)$.
\end{lemma}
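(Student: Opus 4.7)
The plan is to prove the inclusion by showing that, for any $y$ in the ball, $\pi^{i}_{i-1}(y)$ stays inside $e_{i-1}(x)$, which by definition of preimage is equivalent to $y \in (\pi^i_{i-1})^{-1}(e_{i-1}(x))$. The argument combines a Lipschitz upper bound (the projection $\pi^i_{i-1}$ cannot move $y$ very far from $\pi^\infty_{i-1}(x)$, via the global constant $L$ from \eqref{eq:piLipbnd}) with a lower bound coming from the deep point hypothesis, which forces $\pi^\infty_{i-1}(x)$ to sit well into the interior of its containing edge $e_{i-1}(x)$.

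For the lower bound, I would first observe that since $x$ is deep, for all sufficiently large $i$ the edge $e_i(x) \in E(X_i)$ is not a terminal subedge of $e_{i-1}(x)$. By the structure of the subdivision from Axiom \ref{ax:graph2}, this forces $e_i(x)$ to be either a nonterminal subedge $e' \in E(X_{i-1}')$ of $e_{i-1}(x)$, or the opposite edge $e'_\op$ of such a nonterminal $e'$. In both cases, since $\pi^\infty_{i-1}(x) = \pi^i_{i-1}(\pi^\infty_i(x))$ and by Axiom \ref{ax:metricpi}, we have $\pi^\infty_{i-1}(x) \in e'$ and $|e'|=|e_i(x)| = r_i(x)$. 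Since $e'$ is a nonterminal subedge of $e_{i-1}(x)$, Definition \ref{def:deltad} gives
$$d(\pi^\infty_{i-1}(x),\, X_{i-1}\setminus e_{i-1}(x)) \;\geq\; \Delta_{i-1}^d(e_{i-1}(x),e') \;\geq\; \Delta_{i-1}^d \;\geq\; \frac{|e'|}{\delta_{i-1}^d} \;=\; \frac{r_i(x)}{\delta_{i-1}^d},$$
where the last inequality uses $|e'| \leq \delta_{i-1}^d \Delta_{i-1}^d$, which is immediate from the definition of $\delta_{i-1}^d$.

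For the upper bound, $y \in B^i_{Rr_i(x)}(\pi^\infty_i(x))$ combined with \eqref{eq:piLipbnd} gives $d(\pi^i_{i-1}(y), \pi^\infty_{i-1}(x)) \leq LR\cdot r_i(x)$. Since $\delta_{i-1}^d \leq \delta_{i-1}' \to 0$, one can choose $i$ large enough that $\delta_{i-1}^d < 1/(LR)$; for such $i$, the chain $LR\cdot r_i(x) < r_i(x)/\delta_{i-1}^d \leq d(\pi^\infty_{i-1}(x), X_{i-1}\setminus e_{i-1}(x))$ forces $\pi^i_{i-1}(y) \in e_{i-1}(x)$, as desired. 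The main obstacle is mostly bookkeeping: correctly handling the case split of whether $e_i(x)$ is itself the nonterminal subedge $e'$ or its opposite $e'_\op$, and confirming that both yield the same length identity $|e_i(x)| = |e'|$ so that the deep-point-based lower bound genuinely applies to $\pi^\infty_{i-1}(x)$. Once that is settled, the quantitative estimate is immediate.
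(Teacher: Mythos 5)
Your proof is correct and follows essentially the same approach as the paper: both use the deep-point hypothesis to place $\pi^\infty_{i-1}(x)$ on a nonterminal subedge $e' = e_{i-1}'(x)$, deduce $d(\pi^\infty_{i-1}(x), X_{i-1}\setminus e_{i-1}(x)) \geq r_i(x)/\delta_{i-1}^d$ from Definition \ref{def:deltad}, and then combine this with the $L$-Lipschitz bound \eqref{eq:piLipbnd} on $\pi^i_{i-1}$ to conclude once $\delta_{i-1}^d < 1/(LR)$. The only cosmetic difference is that the paper propagates the distance-to-complement bound upward to $X_i$ while you push each point $y$ in the ball downward to $X_{i-1}$; these are contrapositives of the same estimate.
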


\begin{proof}
Let $x \in D$ and $R \geq 1$. Set $x_i := \pi^\infty_i(x)$ and assume $i$ is large enough so that $e_{i-1}'(x)$ is not a terminal interval. Then by Definition \ref{def:deltad}, $d(x_{i-1},X_{i-1} \setminus e_{i-1}(x)) \geq \frac{|e_{i-1}'(x)|}{\delta_{i-1}^d} = \frac{r_i(x)}{\delta_{i-1}^d} \geq \frac{r_i(x)}{\delta_{i-1}'}$. Combining this with \eqref{eq:piLipbnd} yields
$$d(x_i,X_i \setminus (\pi^i_{i-1})^{-1}(e_{i-1}(x))) \geq \frac{1}{L} d(x_{i-1},X_{i-1} \setminus e_{i-1}(x)) \geq \frac{r_i(x)}{L\delta_{i-1}'}$$
Thus, as soon as $i$ is large enough so that $\delta_{i-1}' < \frac{1}{LR}$, we get \\
$B^i_{Rr_i(x_i)} \sbs (\pi^i_{i-1})^{-1}(e_{i-1}(x))$.
\end{proof}

\begin{lemma} \label{lem:doubling}
\begin{enumerate}
\item There exists $C \geq 1$ such that for any $i \geq 0$ and $e \in E(X_{i-1})$, $\mu_{i}$ restricted to $(\pi^{i}_{i-1})^{-1}(e)$ is $C$-doubling with respect to the length metric.
\item For any shortest path $[x,y] \sbs (\pi^{i}_{i-1})^{-1}(e)$, $\mu_i(B_r(x)) \leq 4\mu_i([x,y])$, where $r = |x-y|$.
\end{enumerate}
\end{lemma}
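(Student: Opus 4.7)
My plan is to exploit the chain-like geometry of $U := (\pi^i_{i-1})^{-1}(e)$. By Axiom \ref{ax:graph2}\ref{subax:graph2opintcirc}, $U$ is a linear concatenation of components, each either an interval or a circle of two parallel edges; the projection $\pi^i_{i-1}\colon U\to e$ is thus surjective, at most $2$-to-$1$, and (by Axiom \ref{ax:metricpi}, which makes it an isometry on each edge) $1$-Lipschitz in the length metric. Setting $c := \mu_{i-1}(e)/|e|$, Definition \ref{def:meas} gives that the density of $\mu_i|_U$ with respect to length measure equals $c$ on an interval component and $c/2$ on each of the two arcs of a circle component, so this density lies in $[c/2,c]$ everywhere on $U$.

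The first step is a universal upper bound obtained by pushing forward. Using $(\pi^i_{i-1})_\#\mu_i = \mu_{i-1}$ together with the $1$-Lipschitz property of the projection, the image $\pi^i_{i-1}(B_r(x))$ is contained in an arc of $e$ of length at most $2r$, so
\[
\mu_i(B_r(x)) \leq \mu_{i-1}\bigl(\pi^i_{i-1}(B_r(x))\bigr) \leq 2cr.
\]
Part (2) of the lemma is then immediate: the shortest path $[x,y]$ has length $r$ and $\mu_i$-density $\geq c/2$, so $\mu_i([x,y]) \geq (c/2)r$, and combining with the upper bound gives $\mu_i(B_r(x))/\mu_i([x,y]) \leq 2cr/((c/2)r) = 4$.

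For part (1) I need a matching lower bound on $\mu_i(B_r(x))$. The key observation is that traversing any single circle component from vertex to vertex requires length exactly equal to the underlying subedge---going ``the long way'' around offers no shortcut---so the length-metric distance in $U$ from $x$ to the two endpoints of $e$ equals $\pi^i_{i-1}(x)$ and $|e|-\pi^i_{i-1}(x)$ respectively. Consequently $B_r(x)$ contains two geodesic arcs emanating from $x$ with total length at least $\min(r,|e|)$, and the density bound yields $\mu_i(B_r(x)) \geq (c/2)\min(r,|e|)$. For $r \leq |e|/2$, combining with the upper bound at scale $2r$ gives the doubling ratio $\mu_i(B_{2r}(x))/\mu_i(B_r(x)) \leq 4cr/((c/2)r) = 8$; for $r > |e|/2$ both balls have measure comparable to $\mu_i(U) = c|e|$, and doubling is trivial in that regime.

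The only mildly delicate point---the ``main obstacle'' such as it is---is verifying the distance-from-endpoints claim, which reduces to checking that within a single circle component of perimeter $2|e_k'|$ the two vertices lie at length-metric distance exactly $|e_k'|$ from each other, and then summing such contributions across components. Everything else is routine bookkeeping with the density bounds above.
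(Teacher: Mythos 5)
Your proof is correct and arrives at the same doubling constant ($8$ for part (1), $4$ for part (2)) as the paper. The underlying structural picture—$U$ as a linear chain of intervals and circles, with $\mu_i$-density pinned between $c/2$ and $c$ times length—is exactly the one the paper uses. Where you differ is in execution: the paper disposes of the quantitative estimates with an informal ``analyze the worst-case scenario near a vertex shared by two circles,'' whereas you make the upper bound rigorous by pushing forward through $\pi^i_{i-1}$ and using $(\pi^i_{i-1})_\#\mu_i = \mu_{i-1}$ together with the $1$-Lipschitz property, and you make the lower bound rigorous via the two geodesic arcs emanating from $x$. This is a cleaner and more defensible version of the same idea, and it buys you a uniform bound $\mu_i(B_r(x)) \leq 2cr$ that works at every point rather than just the ``worst'' one. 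The only detail worth spelling out when you write this up is the case split at $r \approx |e|$ (which you did flag): for $r \leq |e|/2$ the lower bound is $(c/2)r$, while for $r > |e|/2$ you should compare against $\mu_i(U) = c|e|$ directly, as you indicated.
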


\begin{proof}
Let $i \geq 0$ and $e \in E(X_{i-1})$. Recall the definition of \emph{circles} and \emph{intervals} from Axiom \ref{ax:graph2}\ref{subax:graph2opintcirc}. By the discussion there, $(\pi^{i}_{i-1})^{-1}(e) = \cup_{e' \sbs e} (\pi^{i}_{i-1})^{-1}(e')$ consists of a sequence of intervals and circles, glued together in a directed way along alternating sink and source vertices. This sequence begins and ends with \emph{terminal intervals}, defined in Axiom \ref{ax:graph2}\ref{subax:graph2termint}. With respect to the length metric and length measure, $(\pi^{i}_{i-1})^{-1}(e)$ is doubling. This follows by analyzing the worst case scenario for a ball. This scenario occurs near points where two circles are glued together. It is possible to have a geodesic ball of radius $r$ such that the geodesic ball of radius $2r$ has 4 times the length. This implies length measure is doubling with doubling constant 4. Let $c \in (0,1]$ such that $\mu_{i-1}$ restricted to $e$ equals $c$ times length measure, and for any $e \sps e' \in E(X_{i-1}')$, $\mu_{i}$ restricted to $(\pi^{i}_{i-1})^{-1}(e') \sbs (\pi^{i}_{i-1})^{-1}(e)$ equals $c$ or $\frac{c}{2}$ times length measure ($c$ if it's an interval, $\frac{c}{2}$ if it's a circle). It follows that $\mu_{i}$ restricted to $(\pi^{i}_{i-1})^{-1}(e)$ is bounded above by $c$ times length measure and below by $\frac{c}{2}$ times length measure. Since length measure it doubling with doubling constant 4, this implies $\mu_{i}$ is doubling with doubling constant bounded by 8 (this isn't sharp).

The second statement can also be observed by examining the worst case scenario where $x$ is a vertex shared by two adjacent circles and $y$ belongs to one of these circles. Then $B_r(x)$ will consist of four copies of an interval of length $r = |x-y|$, and the $\mu_i$ measure of any of these new intervals is the same as that of $[x,y]$. This implies the second statement.
\end{proof}

\begin{remark}
It's also clear from the description of $(\pi_{i-1}^i)^{-1}(e_{i-1}(x))$ given in the preceding section that if $x,y \in (\pi^{i+1}_i)^{-1}(e)$ and $x$ and $y$ do not belong to opposite edges of a circle, then $x$ and $y$ belong to a directed (and thus geodesic) edge path, and so $d(x,y) = |y-x|$. On the other hand, if $y \in B^i_{Rr_i(x)}(x_i)$ and $x_i$ and $y$ belong to opposite edges of a circle, then $|y-x_i| \leq |e_i(x)|$. In either case, we have, for $R \geq 1$ and $i$ sufficiently large,

\begin{equation} \label{eq:quasigeo}
\forall y \in B^i_{Rr_i(x)}(x_i), \:\:\: |y-x_i| \leq Rr_i(x)
\end{equation}
\end{remark}


\subsection{Points having a NonEuclidean Tangent}
\begin{theorem} \label{thm:Sinfty}
There exists a Borel $S_\infty \sbs X_\infty$ such that $\mu_\infty(S_\infty) > 0$, and for all $x \in S_\infty$, there exists a nonprincipal ultrafilter $\U(x)$ (depending on $x$) on $\N$ such that the tangent cone $T_x^{r_i(x),\U(x)}X_\infty$ does not embed (even topologically) into $\R$.
\end{theorem}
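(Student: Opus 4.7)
The plan proceeds in three movements. First, let $V_i \sbs X_\infty$ be the preimage under $\pi^\infty_i$ of the union of all \emph{circle-becoming} subedges of $X_i'$ (those $e' \in E(X_i')$ with $e'_\op \neq e'$); call this latter set $V_i' \sbs X_i$. I claim $\mu_\infty(V_i) \geq \beta$ for every $i$, where $\beta$ is the constant of Axiom \ref{ax:thickcircsetlength}. The key tool is a path decomposition $\mu_i = \sum_P w_P \lambda_P$, where $P$ ranges over directed $0$--$1$ edge paths in $X_i$, the weights $w_P \geq 0$ sum to $1$, and $\lambda_P$ is length measure along the arc-length parametrization of $P$. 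Such a decomposition is built inductively from Definition \ref{def:meas} by splitting a path's weight equally between its two continuations each time it crosses a circle. Since $V_i' \cap P = \cup E_{\text{circ}}(P)$, Axiom \ref{ax:thickcircsetlength} gives $|V_i' \cap P| \geq \beta$, and by \eqref{eq:measpush},
\begin{equation*}
\mu_\infty(V_i) = \mu_i(V_i') = \sum_P w_P |V_i' \cap P| \geq \beta.
\end{equation*}
Setting $S_\infty := \limsup_i V_i$, reverse Fatou yields $\mu_\infty(S_\infty) \geq \beta > 0$.

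Next, for each $x \in S_\infty$, fix a strictly increasing subsequence $(i_k)$ with $x \in V_{i_k}$ for all $k$ and let $\U(x)$ be any nonprincipal ultrafilter on $\N$ containing $\{i_k + 1 : k \in \N\}$. Write $r_k := r_{i_k+1}(x) = |e_{i_k}'(x)|$ and let $v_0^{(k)}, v_1^{(k)}$ denote the source and sink of $e_{i_k}'(x)$. Parametrize the two edges of the circle $e_{i_k}'(x) \cup (e_{i_k}'(x))_\op \sbs X_{i_k+1} \sbs X_\infty$ by arc length (rescaled to $[0,1]$) as $\alpha_k, \beta_k \colon [0,1] \to X_\infty$, both from $v_0^{(k)}$ to $v_1^{(k)}$. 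By Axioms \ref{ax:metricd}, \ref{ax:metricpi} each is an isometric embedding of $[0,1]$ into $(X_\infty, \tfrac{1}{r_k} d)$, and they stay within uniformly bounded rescaled distance from $x$ by \eqref{eq:fiberdiam}. Passing to the ultralimit along $\U(x)$ yields isometric embeddings $\alpha_\infty, \beta_\infty \colon [0,1] \to T$ sharing both endpoints, where $T := T_x^{r_i(x), \U(x)} X_\infty$.

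Two properties of $\alpha_\infty, \beta_\infty$ finish the argument. \emph{(a)} $\alpha_\infty(s) = \beta_\infty(t)$ forces $s = t$: the projection $\pi^{i_k+1}_{i_k}$ is the identity on $\alpha_k([0,1]) \sbs X_{i_k}'$ and collapses $\beta_k$ onto $\alpha_k$ isometrically by Axiom \ref{ax:metricpi}, so $|s - t| r_k = d(\alpha_k(s), \alpha_k(t)) \leq L \cdot d(\alpha_k(s), \beta_k(t))$ by Lemma \ref{lem:piLipbnd}, and rescaling plus the ultralimit yields $|s - t| \leq L \cdot d_T(\alpha_\infty(s), \beta_\infty(t))$. \emph{(b)} There exists $s_\infty \in [0,1]$ with $\alpha_\infty(s_\infty) \neq \beta_\infty(s_\infty)$: Axiom \ref{ax:thickcircht} provides $p_k \in e_{i_k}'(x)$ with $d(p_k, p_k^\op) \geq \alpha r_k$, and writing $p_k = \alpha_k(s_k)$, $p_k^\op = \beta_k(s_k)$, the ultralimit $s_\infty := \lim_{k \to \U(x)} s_k$ satisfies $d_T(\alpha_\infty(s_\infty), \beta_\infty(s_\infty)) \geq \alpha$.

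To conclude, let $E := \{s \in [0,1] : \alpha_\infty(s) = \beta_\infty(s)\}$; this set is closed, contains $\{0,1\}$, and excludes $s_\infty$. Choose a connected component $(s_1, s_2)$ of $[0,1] \setminus E$ with $s_1 < s_\infty < s_2$. By (a), the simple arcs $\alpha_\infty([s_1, s_2])$ and $\beta_\infty([s_1, s_2])$ meet only at their common endpoints $\alpha_\infty(s_j) = \beta_\infty(s_j)$, $j=1,2$, so their union is a topological circle embedded in $T$. Since $S^1$ does not embed topologically into $\R$ (removing a point from $S^1$ keeps it connected, while removing an interior point from any nondegenerate subinterval of $\R$ disconnects it), neither does $T$. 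The main technical obstacle is the first movement: making precise the inductive weight-splitting that turns the length-theoretic bound in Axiom \ref{ax:thickcircsetlength} into the measure bound $\mu_\infty(V_i) \geq \beta$; the remaining steps are essentially ultralimit bookkeeping driven by Axioms \ref{ax:metricpi} and \ref{ax:thickcircht}.
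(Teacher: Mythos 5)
Your proof follows the same overall route as the paper: use the path (Alberti) decomposition of $\mu_i$ — the paper isolates this as Lemma~\ref{lem:measdisint} — together with Axiom~\ref{ax:thickcircsetlength} to get $\mu_i(\text{circle-becoming edges}) \geq \beta$, pass to $S_\infty := \limsup_i$ of the preimages to get $\mu_\infty(S_\infty) \geq \beta$, then for each $x \in S_\infty$ pick a nonprincipal ultrafilter concentrating on the subsequence of ``circle scales'' and argue that the tangent cone contains a topological circle of height $\geq \alpha$.

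Two points where your write-up is actually more careful than the paper's. First, the rescaling index. The paper chooses $\U(x)$ to contain $\{i_j(x)\}$ and asserts that $(X_\infty, \tfrac{1}{r_{i_j}(x)}d, x)$ contains a circle of height $\geq \alpha$. But the circle $(\pi^{i_j+1}_{i_j})^{-1}(e'_{i_j}(x))$ has edge length $|e'_{i_j}(x)| \leq \delta'_{i_j}\,|e_{i_j}(x)| = \delta'_{i_j}\, r_{i_j}(x)$, so after rescaling by $1/r_{i_j}(x)$ its diameter is $O(\delta'_{i_j}) \to 0$; at that scale the circle degenerates. The correct scale is $|e'_{i_j}(x)| = |e_{i_j+1}(x)| = r_{i_j+1}(x)$, which is exactly what you use by putting $\{i_k+1\}$ into $\U(x)$; this fixes an off-by-one slip in the paper's proof (the rest of the paper's argument is unaffected, since Remark~\ref{rmk:tangentconegeodesics} and Corollary~\ref{cor:RNPapplication} only need the circle to be present in the tangent cone at the right scale). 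Second, you make explicit — via the injectivity estimate (a), the height estimate (b), and the closed set $E$ of coincidence points — why the ultralimit of the two geodesic arcs actually bounds an embedded topological circle rather than just ``contains a circle''; the paper leaves this as a one-line assertion. Both points are correct; the argument is sound.
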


Before beginning the proof of the theorem, we require a lemma:

\begin{lemma} \label{lem:measdisint}
For each $i \geq 0$, there is a finite set of directed 0-1 edge paths of $X_i$, $\mathcal{P}_i$, and a probability measure $\P_i$ on $\mathcal{P}_i$ such that for every edge $e \in E(X_i)$,
\begin{equation*}
\frac{\mu_i(e)}{|e|} = \sum_{\substack{P \in \mathcal{P}_i \\ e \sbs P}}\P_i(P)
\end{equation*}
and it follows that, for any $A \sbs e \in E(X_i)$ Borel,
\begin{equation} \label{eq:measdisint}
\mu_i(A) = \sum_{P \in \mathcal{P}_i}\P_i(P)|A \cap P|
\end{equation} 
\end{lemma}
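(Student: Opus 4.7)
The plan is to interpret the identity as decomposing the edge-density function $c(e) := \mu_i(e)/|e|$ as a convex combination of indicator functions of directed $0$-to-$1$ edge paths; equivalently, to view $c$ as a unit flow on the directed graph $X_i$ and perform a standard path decomposition.

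Step 1 (flow property, by induction on $i$). Because $\mu_i\res{e}$ is a constant multiple of length measure (Definition \ref{def:meas}), $c(e)$ is well-defined and constant on each edge. I would prove by induction on $i$ that $c$ satisfies Kirchhoff's law at every interior vertex of $X_i$ and has total outflow $1$ at the source vertex $0$. The base case $i=0$ gives $c([0,1]) = 1$. For the inductive step, each old vertex of $X_i$ still appears in $X_{i+1}$ and, by Axiom \ref{ax:graph2}\ref{subax:graph2termint}, its adjacent subedges in $X_{i+1}$ are terminal intervals; by the interval clause of Definition \ref{def:meas} these carry the same value of $c$ as the corresponding parent edge in $X_i$, so conservation at old vertices transfers directly from the inductive hypothesis. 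At a new subdivision vertex $v$ in the interior of an edge $e \in E(X_i)$, one of four configurations of adjacent subedges occurs (interval/interval, interval/circle, circle/interval, circle/circle); using $c(e') = c(e)$ for interval subedges and $c(e') + c(e'_\op) = c(e)$ for circle pairs, both immediate from Definition \ref{def:meas}, the influx and efflux at $v$ both equal $c(e)$.

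Step 2 (path decomposition). The graph $X_i$ is a finite directed acyclic multigraph from $0$ to $1$: the projection $\pi^i_0 : X_i \to X_0 = [0,1]$ is monotone along each directed edge, which rules out directed cycles (the two parallel opposite edges of a circle are both oriented from source to sink, so they form a topological circle but not a directed cycle), and every edge lies on some directed $0$-to-$1$ path, as follows inductively from the subdivision construction. On such a graph any nonnegative Kirchhoff flow of total value $1$ decomposes by the greedy algorithm, repeatedly subtracting $(\min_{e \in P} c(e))\,\chi_P$ for a directed $0$-to-$1$ path $P$ of positive residual flow, into a finite sum $c = \sum_{P \in \mathcal{P}_i} \P_i(P)\,\chi_P$ where $\mathcal{P}_i$ is finite, $\P_i(P) > 0$, and $\sum_P \P_i(P) = 1$. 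Restricting this identity to a single edge $e$ yields the first displayed equation.

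Step 3 (Borel consequence). For any Borel $A \sbs e$, $\mu_i(A) = c(e)|A|$, and $|A \cap P|$ equals $|A|$ when $e \sbs P$ and $0$ otherwise, since a directed path $P$ meets $e$ in at most its two endpoints unless $e \sbs P$ and the vertex set has length measure zero. Therefore $\sum_P \P_i(P)|A \cap P| = |A|\sum_{P \ni e} \P_i(P) = c(e)|A| = \mu_i(A)$. The only substantive step is the inductive case analysis at subdivision vertices in Step 1; the flow decomposition and the Borel consequence are routine once the flow property is established.
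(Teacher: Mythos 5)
Your proof is correct, but it takes a genuinely different route from the paper's. The paper constructs $\mathcal{P}_i$ and $\P_i$ by a direct recursion that mirrors Definition~\ref{def:meas}: starting from $\mathcal{P}_0 = \{X_0\}$ with $\P_0 = \delta_{X_0}$, it sends each $P \in \mathcal{P}_i$ to the pair $\{P, P_\op\}$ in $\mathcal{P}_{i+1}$, where $P_\op$ replaces every subedge $e' \sbs P$ by its opposite $e'_\op$, and splits the weight $\P_i(P)$ in half between $P$ and $P_\op$ exactly when $P_\op \neq P$. Verifying the defining identity at level $i+1$ is then an immediate bookkeeping check against Definition~\ref{def:meas}. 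You instead isolate the key structural fact — that the edge density $c(e) = \mu_i(e)/|e|$ is a unit Kirchhoff flow on the finite DAG $X_i$ — prove it by induction (using the interval/circle dichotomy at interior subdivision vertices and Axiom~\ref{ax:graph2}\ref{subax:graph2termint} at old vertices), and then invoke the standard greedy path decomposition of nonnegative flows on DAGs. Both approaches are sound. The paper's has the advantage of producing an explicitly recursive family $\mathcal{P}_i$ whose compatibility across levels is built in — useful for the remark following the lemma about passing to an Alberti representation of $\mu_\infty$ in the limit. Yours buys conceptual modularity: the flow-conservation lemma cleanly separates the graph-specific content from the standard decomposition machinery, at the cost of that machinery (the greedy termination argument, acyclicity via monotonicity of $\pi_0^i$, and the observation that every edge lies on a $0$-$1$ path) needing to be established or cited. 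The minor points you flag as routine — acyclicity and edge coverage — are indeed correct and worth having stated, since the flow decomposition theorem genuinely needs them.
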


\begin{proof}
The proof is by induction on $i$. The base case $i = 0$ holds trivially with $\mathcal{P}_0 = \{X_0\}$, $\P_0 = \delta_{X_0}$. Assume the statement holds for some $i \geq 0$. Let $P \in \mathcal{P}_i$. Let $P_\op$ be the unique 0-1 directed edge path in $X_{i+1}$ such that $e' \sbs P$ if and only if $e'_\op \sbs P_\op$ for every $e' \in E(X_i')$. Let $\mathcal{P}_{i+1} = \{P,P_\op\}_{P \in \mathcal{P}_i}$. For each $P \in \mathcal{P}_i$, define $\P_{i+1}(P_\op): = \P_{i+1}(P): = \frac{1}{2}\P_{i}(P)$ if $P_\op \neq P$, and $\P_{i+1}(P_\op) = \P_{i+1}(P): = \P_{i}(P)$ if $P_\op = P$. By Definition \ref{def:meas}, $(\mathcal{P}_{i+1},\P_{i+1})$ satisfies the desired property.
\end{proof}

\begin{remark}
This lemma gives an Alberti representation of the measure $\mu_i$. In \cite{Bat15}, Bate used a property he called \emph{universality} of Alberti representations to characterize Lipschitz differentiability spaces. Our representation of the measure $\mu_\infty$ (which can be constructed by taking limits of the representations of $\mu_i$) will generally fail this universality condition, which is consistent with our discussion in Section \ref{ss:proofmethods} that $(X_\infty,d,\mu_\infty)$ is not a true Lipschitz differentiability space.
\end{remark}

\begin{proof}[Proof of Theorem \ref{thm:Sinfty}]
Let $i \geq 0$ and $E_{\text{circ}}(X_i')$ the set of edges $e' \in E(X_i')$ such that $(\pi^{i+1}_{i})^{-1}(e')$ is a circle. Set $S_i := (\pi^\infty_i)^{-1}(E_{\text{circ}}(X_i'))$, so $S_i$ is closed. By \eqref{eq:measpush}, \eqref{eq:measdisint}, and Axiom \ref{ax:thickcircsetlength},
$$\mu_\infty(S_i) \overset{\eqref{eq:measpush}}{=} \mu_i(E_{\text{circ}}(X_i')) \overset{\eqref{eq:measdisint}}{=} \sum_{P \in \mathcal{P}_i} \P_i(P) |E_{\text{circ}}(X_i') \cap P| \overset{\ref{ax:thickcircsetlength}}{\geq} \sum_{P \in \mathcal{P}_i} \P_i(P) \beta = \beta$$
Because of this, we set $S_\infty := \limsup_{i \to \infty} S_i$ (an $F_{\sigma\delta}$, and hence Borel, set) and get
\begin{equation*}
\mu_\infty(S_\infty) \geq \beta > 0
\end{equation*}
By definition, $S_\infty$ has the following property: for any $x \in S_\infty$, there is a subsequence $i_j(x)$ of $i$ for which $\pi_{i_j(x)}^\infty(x) \in E_{\text{circ}}(X_{i_j(x)}')$. Thus, each pointed metric space $(X_\infty,\frac{1}{r_{i_j}(x)}d,x)$ contains a circle whose height (see Axiom \ref{ax:thickcircht} for definition of height) is bounded below by $\alpha$, and the point $x$ belongs to this circle. Let $\U(x)$ be any nonprincipal ultrafilter on $\N$ containing $\{i_j(x)\}_{j=0}^\infty$, which exists by Zorn's lemma. Then the $\U(x)$-ultralimit of this sequence of pointed metric spaces must also contain such a circle (and the point $x$ will again belong to this circle), which obviously doesn't topologically embed into $\R$.
\end{proof}

\begin{remark} \label{rmk:tangentconegeodesics}
As described in the proof, each of the pointed spaces $(X_\infty,\frac{1}{r_{i_j}(x)}d,x)$ contain a circle of height $\alpha$ which contains $x$. Let $e$ and $e_\op$ be the opposite edges of this circle. We can extend $e$ in both directions to a 0-1 edge path. Since $e_\op$ has the same vertices as $e$, this also extends $e_\op$ to a 0-1 edge path. Unioning the circle $e \cup e_\op$ with the extension to a 0-1 edge path results in a space consisting of two 0-1 geodesics whose union contains a circle of height $\alpha$, and that coincide with each other outside that circle. Passing to the ultralimit, we see that the tangent cone $T_x^{r_i(x),\U(x)}X_\infty$ contains two bi-infinite geodesics whose union contains a circle of height $\alpha$, and that coincide with each other outside that circle. Both geodesics get mapped down isometrically onto $\R$ under the blowup $(\pi^\infty_0)_x: T_x^{r_i(x),\U(x)}X_\infty \to \R$.
\end{remark}


\section{Approximation of Functions on $X_\infty$ via $X_i$}
\label{sec:approx}
We begin this section by introducing our fundamental tool for approximating functions on $X_\infty$ by functions on $X_i$, the conditional expectation. The main results are Theorems \ref{thm:condexpbnd} and \ref{thm:condexpmeas}. We then use this tool to define the derivative of Lipschitz functions on $X_\infty$. The main result on the derivative is Theorem \ref{thm:derivdef}.


\subsection{Conditional Expectation} \label{ss:condexp}
Let $i \geq 0$ and $j \in \{i, i+1, \dots \infty\}$.

\begin{definition}
The \textbf{conditional expectation} is a bounded linear map $\E^{j}_{i}: L^1(\mu_{j};B) \to L^1(\mu_i;B)$ uniquely characterized by the identity
\begin{equation} \label{eq:condexp}
\int_{X_i} \phi \cdot \E^{j}_{i}(h) d\mu_i = \int_{X_j} (\phi \comp \pi^j_i) \cdot h d\mu_j
\end{equation}
for all $h \in L^1(\mu_{j};B)$ and $\phi \in L^\infty(\mu_i)$. It is a standard tool in probability theory whose existence can be proven by elementary theorems of measure theory. See Chapter 1 of \cite{Pi16} for background.
\end{definition}

It follows from $L^p$-$L^q$ duality that the conditional expectation is also contractive from $L^p(\mu_j;B) \to L^p(\mu_i;B)$ for any $p \in [1,\infty]$. The majority of this section is dedicated to proving the following theorem:

\begin{theorem} \label{thm:condexpbnd}
For every $i \geq 0$, $\E^\infty_i$ maps $\Lipo{X_\infty;B}$ into $\Lipo{X_i;B}$ with operator norm bounded by $L^2$.
\end{theorem}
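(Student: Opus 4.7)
The plan is to derive an explicit pointwise formula for the one-step conditional expectation $\E^{j+1}_j$, use it to bound the Lipschitz constant at each finite stage, and pass to the infinite limit via a Lipschitz approximation of $f$ that factors through $X_j$.

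First, I would verify the formula
$$\E^{j+1}_j(h)(x) \;=\; \tfrac{1}{2}\bigl(h(x)+h(x^\op)\bigr),$$
with the convention $x^\op=x$ on intervals, by integrating against test functions $\phi\in L^\infty(\mu_j)$ edge-by-edge using Axiom \ref{ax:metricpi} and Definition \ref{def:meas}. The triangle inequality then bounds
$$\bigl\|\E^{j+1}_j(h)(x)-\E^{j+1}_j(h)(y)\bigr\|\;\leq\;\tfrac{\Lip{h}}{2}\bigl(d(x,y)+d(x^\op,y^\op)\bigr),$$
so the one-step estimate $\Lip{\E^{j+1}_j(h)}\leq(1+\delta_j^d)\Lip{h}$ reduces to showing $d(x^\op,y^\op)\leq(1+2\delta_j^d)d(x,y)$. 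A short case analysis on the edges of $X_j'$ containing $x,y$ shows that $x^\op,y^\op$ never lie on opposite open edges of a circle in $X_{j+1}$, since the opposite-edge involution on $E(X_{j+1})$ pairs an element of $E(X_j')$ with one of $E(X_{j+1})\setminus E(X_j')$ exactly in the circle case, so two opposite points whose projections already lie in $E(X_j')$ cannot themselves form a circle-pair. Lemma \ref{lem:pi-1Lipbnd} then yields the desired estimate.

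Iterating via the tower property of conditional expectation and invoking \eqref{eq:pi-1Lipbndprod}, I obtain $\Lip{\E^j_i(h)}\leq\prod_{k=i}^{j-1}(1+\delta_k^d)\Lip{h}\leq L\,\Lip{h}$ for every finite $j\geq i$ and $h\in\Lipo{X_j;B}$. To handle $\E^\infty_i$, I would introduce the approximation $\tilde f_j:=f\res{X_j}\comp\pi^\infty_j\in\Lipo{X_\infty;B}$: by \eqref{eq:piLipbnd} $\Lip{\tilde f_j}\leq L\,\Lip{f}$, and by \eqref{eq:fiberdiam} combined with $\delta_j'\to 0$, $|e_j|\leq 1$, and uniform continuity of $f$, $\tilde f_j\to f$ uniformly on the compact space $X_\infty$. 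The tower property gives $\E^\infty_i(\tilde f_j)=\E^j_i(f\res{X_j})$, a sequence on $X_i$ uniformly Lipschitz with constant $\leq L\,\Lip{f}$ and uniformly bounded. Since $\E^\infty_i$ is $L^1$-contractive, $\E^\infty_i(\tilde f_j)\to\E^\infty_i(f)$ in $L^1(\mu_i;B)$, and Arzel\`a--Ascoli extracts a uniformly convergent subsequence whose limit $g$ is a Lipschitz representative of $\E^\infty_i(f)$ with $\Lip{g}\leq L\,\Lip{f}\leq L^2\,\Lip{f}$. The basepoint condition $g(0)=0$ follows from the explicit one-step formula applied inductively at the vertex $0$, which satisfies $0^\op=0$.

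The main obstacle is the case analysis behind $d(x^\op,y^\op)\leq(1+2\delta_j^d)d(x,y)$: one must separately consider $x,y$ lying on the same edge of $X_j'$ (where the opposite-point map is an isometry and the bound is immediate) versus distinct edges (where one must verify that $x^\op,y^\op$ are never a circle-pair before applying Lemma \ref{lem:pi-1Lipbnd}). Once this geometric fact is in hand, iteration, the uniform approximation by $\tilde f_j$, and Arzel\`a--Ascoli complete the proof with room to spare in the claimed constant $L^2$.
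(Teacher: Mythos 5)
Your proposal follows essentially the same route as the paper: the explicit one-step formula $\E^{j+1}_j(h)(x)=\tfrac12(h(x)+h(x^\op))$, the Lipschitz estimate for one step via the observation that $x^\op,y^\op$ can never be a circle-pair so Lemma \ref{lem:pi-1Lipbnd} applies, iteration to get a uniform finite-stage bound, and passage to $\E^\infty_i$ by approximating $f$ by the functions $f\res{X_j}\comp\pi^\infty_j$. The only cosmetic differences are that the paper invokes $L^\infty$-contractivity of $\E^\infty_i$ to pass to the limit directly rather than $L^1$-contractivity plus Arzel\`a--Ascoli, and your bookkeeping actually yields the sharper operator-norm bound $L$ (the paper's extra factor of $L$ comes from estimating $\|f_j\|_{\Lipo{X_\infty;B}}\leq L$ rather than $\|f\res{X_j}\|_{\Lipo{X_j;B}}\leq 1$), which is of course still $\leq L^2$.
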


Such a result does not hold for general metric measure spaces (easy examples on $[0,1]$ show that conditional expectation need not preserve Lipschitz or even continuous functions), but will in our specific instance.

The proof will come at the end of this subsection and is preceded by  several lemmas. We give an outline of the proof structure here:

\begin{itemize}
\item Show that for every $j < \infty$, $\E^{j}_{i}: \Lipo{X_j;B} \to \Lipo{X_i;B}$ has operator norm uniformly bounded by $L$.
\item Noting that $\E^{j}_{i} := \E^{{i+1}}_{i} \comp \E^{{i+2}}_{{i+1}} \comp \dots \E^{{j}}_{{j-1}}$, to prove the previous item, it suffices to consider the case $j = i+1$ and prove that $\|\E^{i+1}_i\|_{\Lipo{X_{i+1;B}} \to \Lipo{X_{i};B}} \leq 1 + \delta_i'$, because by \eqref{eq:pi-1Lipbndprod} we obtain 
\begin{equation} \label{eq:finitecondexpbnd}
\|\E^{j}_i\|_{\Lipo{X_{j}} \to \Lipo{X_{i}}} \leq \Pi_{k=i}^{j-1}(1 + \delta_k') \leq L
\end{equation}
for every $\infty > j \geq i \geq 0$. This is accomplished with Lemma \ref{lem:finitecondexpbnd}.
\item Extend the domain to $X_\infty$ by approximating with maps factoring through some $X_i$, Lemma \ref{lem:Lunifdense} (we gain another factor of $L$ here).
\end{itemize}

\subsubsection{Explicit Formula for and Boundedness of $\E^{i+1}_i$}
\begin{lemma} \label{lem:finitecondexpbnd}
For each $i \geq 0$ and $h \in \Lipo{X_{i+1};B}$,
\begin{equation} \label{eq:condexpform}
[\E^{i+1}_i(h)](p) = \frac{h(p)+h(p^\op)}{2}
\end{equation}
(recall the definition of $p^\op$ from Axiom \ref{ax:graph2}\ref{subax:graph2opintcirc}). Furthermore,
$$\|\E^{i+1}_i\|_{\Lipo{X_{i+1}} \to \Lipo{X_{i}}} \leq 1 + \delta_i'$$
\end{lemma}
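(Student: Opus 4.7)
The plan is to first establish the explicit formula \eqref{eq:condexpform} by verifying that its right-hand side satisfies the defining identity \eqref{eq:condexp}, and then read off the Lipschitz bound directly from the formula via the triangle inequality plus a bound on the distance between opposite points.

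For the formula, I would split $X_{i+1} = \bigcup_{e' \in E(X_i')} (e' \cup e'_\op)$ and compute the right side of \eqref{eq:condexp} edge by edge against an arbitrary test function $\phi \in L^\infty(\mu_i)$. For an interval-type edge ($e'_\op = e'$), Definition \ref{def:meas} gives $\mu_{i+1}\res{e'} = \mu_i\res{e'}$ and Axiom \ref{ax:metricpi} makes $\pi^{i+1}_i$ the identity, contributing $\int_{e'} \phi \cdot h \, d\mu_i$. For a circle-type edge ($e'_\op \neq e'$), the same two ingredients give $\mu_{i+1}\res{e'} = \mu_{i+1}\res{e'_\op} = \tfrac{1}{2}\mu_i\res{e'}$ and $\pi^{i+1}_i$ collapses $e'_\op$ isometrically onto $e'$ by $p^\op \mapsto p$, so changing variables on $e'_\op$ yields $\int_{e'} \phi(p)\tfrac{h(p)+h(p^\op)}{2} d\mu_i(p)$. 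Summing over $e' \in E(X_i')$ matches the left side of \eqref{eq:condexp} applied to the ansatz $\tfrac{h(p)+h(p^\op)}{2}$, confirming \eqref{eq:condexpform} (and the identity $0 = 0^\op$ places the image in $\Lipo{X_i;B}$).

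For the Lipschitz bound, applying the triangle inequality to \eqref{eq:condexpform} for $p,q \in X_i$ gives
$$\|[\E^{i+1}_i(h)](p) - [\E^{i+1}_i(h)](q)\| \leq \tfrac{1}{2}\Lip{h}\bigl(d(p,q) + d(p^\op,q^\op)\bigr),$$
so it suffices to show $d(p^\op,q^\op) \leq (1+2\delta_i')d(p,q)$. I would split into cases by which edges of $X_i'$ contain $p$ and $q$. If $p,q$ lie in a common edge $e'$, then either $p = p^\op$ and $q = q^\op$ (interval case, trivial), or $p^\op, q^\op$ both lie in $e'_\op$, in which case Axioms \ref{ax:metricpi} and \ref{ax:metricd} (the map $e' \to e'_\op$ induced by $\pi_i^{i+1}$ is an isometry, and $e'_\op$ lies on a directed edge path of $X_{i+1}$ on which $d$ is geodesic) force $d(p^\op,q^\op) = d(p,q)$. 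If $p,q$ lie in distinct edges of $X_i'$, the plan is to invoke Lemma \ref{lem:pi-1Lipbnd} with $x_{k+1} = p^\op$ and $y_{k+1} = q^\op$, combined with $\delta_i^d \leq \delta_i'$.

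The only mildly subtle point is confirming that Lemma \ref{lem:pi-1Lipbnd}'s hypothesis holds in the distinct-edge case: $p^\op$ and $q^\op$ cannot lie on opposite open edges of a single circle. This follows from a short combinatorial check using that every circle pair in $X_{i+1}$ has the form $(e', e'_\op)$ with $e' \in E(X_i')$ and $e'_\op \in E(X_{i+1}) \setminus E(X_i')$, while $p, q \in X_i$ forces the edges containing $p^\op, q^\op$ both to have opposites inside $E(X_i')$; chasing these constraints forces the edges containing $p$ and $q$ to coincide, contradicting the case assumption. Once this is verified, Lemma \ref{lem:pi-1Lipbnd} yields $d(p^\op,q^\op) \leq (1+2\delta_i^d) d(p,q)$, and averaging with $d(p,q)$ delivers the claimed bound $(1+\delta_i')\Lip{h}$.
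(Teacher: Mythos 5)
Your proposal is correct and follows essentially the same route as the paper: verify \eqref{eq:condexpform} against \eqref{eq:condexp} using Definition \ref{def:meas} and Axiom \ref{ax:metricpi}, then bound the operator norm by the triangle inequality plus $d(p^\op,q^\op) \leq (1+2\delta_i')d(p,q)$ via Lemma \ref{lem:pi-1Lipbnd}. The paper is simply terser — it declares the formula check ``a relatively simple exercise'' and dispenses with the circle-hypothesis verification in one line, while you fill in those details and separate out the common-edge case (where the paper just applies Lemma \ref{lem:pi-1Lipbnd} uniformly); neither difference changes the substance or the final constant.
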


\begin{proof}
Let $i \geq 0$ and $h \in \Lipo{X_{i+1}}$. It is a relatively simple exercise to check that \eqref{eq:condexpform} satisfies \eqref{eq:condexp} using Definition \ref{def:meas}. We now bound the operator norm. Let $x,y \in X_i$. No two points of $X_i \sbs X_{i+1}$ can belongs to opposite edges of a circle in $X_{i+1}$, so also $x^\op$ and $y^\op$ do not belong to opposite edges of a circle. Thus the hypotheses for Lemma \ref{lem:pi-1Lipbnd} are met. Then
$$\|\E^{i+1}_i(h)(x) - \E^{i+1}_i(h)(y)\| = \frac{\|h(x)+h(x^\op)-h(y)-h(y^\op)\|}{2}$$
$$\leq \frac{\|h(x)-h(y)\|}{2} + \frac{\|h(x^\op)-h(y^\op)\|}{2} \leq \frac{\|h\|_{\Lipo{X_{i+1}}}}{2}(d(x,y) + d(x^\op,y^\op))$$
$$\overset{\text{Lemma }\ref{lem:pi-1Lipbnd}}{\leq} \frac{\|h\|_{\Lipo{X_{i+1}}}}{2}(d(x,y) + (1+2\delta_i')d(x,y)) = (1+\delta_i')\|h\|_{\Lipo{X_{i+1}}}d(x,y)$$
\end{proof}

\subsubsection{Extending Domain to $\Lipo{X_\infty;B}$}
For $Y$ a metric space and $K \geq 1$, we say a subspace $V \sbs \Lipo{Y;B}$ is $K$-\emph{uniformly dense} in $\Lipo{Y;B}$ if the closure with respect to the topology of uniform convergence of compacta (equivalently, pointwise convergence on any dense subset) of the ball of radius $K$ in $V$ contains the unit ball of $\Lipo{Y;B}$. 

Each Banach space $\Lipo{X_i;B}$ can be identified as a closed subspace of $\Lipo{X_\infty;B}$ by pulling back under the map $\pi^\infty_i$. Denote the image of this identification by $\Lipo{X_i;B}_\pi$. We then obtain the (nonclosed) subspace $\cup_{i < \infty} \Lipo{X_i;B}_\pi \sbs \Lipo{X_\infty;B}$. We note that, for any $f \in \Lipo{X_i;B}$,
$$\|f\|_{\Lipo{X_i;B}} \leq \|f \comp \pi^\infty_i\|_{\Lipo{X_\infty;B}} \leq \|f\|_{\Lipo{X_i;B}}\|\pi^\infty_i\|_{\text{Lip}} \leq L\|f\|_{\Lipo{X_i;B}}$$
so that the embeddings $\Lipo{X_i;B}_\pi \hookrightarrow \Lipo{X_\infty;B}$ are uniformly bounded but not isometric.

\begin{lemma} \label{lem:Lunifdense}
For any Banach space $B$, $\cup_{i < \infty} \Lipo{X_i;B}_\pi \sbs \Lipo{X_\infty;B}$ is $L$-uniformly dense.
\end{lemma}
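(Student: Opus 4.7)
The plan is to approximate $f \in \Lipo{X_\infty;B}$ with $\Lip(f) \leq 1$ by the most natural candidates available, namely $f_i \comp \pi^\infty_i$ where $f_i := f\res{X_i}$. Since $X_i \sbs X_\infty$ is isometric (both carry the metric $d$ inherited from $M$) and contains the basepoint $0 \in X_0$, we have $f_i \in \Lipo{X_i;B}$ with $\Lip(f_i) \leq \Lip(f) \leq 1$. Composing with $\pi^\infty_i$ and using \eqref{eq:piLipbnd}, the pullback $f_i \comp \pi^\infty_i$ lies in $\Lipo{X_i;B}_\pi$ with Lipschitz constant at most $L$ as an element of $\Lipo{X_\infty;B}$, so $f_i \comp \pi^\infty_i$ lies in the $L$-ball of $\cup_{i<\infty}\Lipo{X_i;B}_\pi$.

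It remains to show that $f_i \comp \pi^\infty_i \to f$ in the topology of uniform convergence on compacta; since $X_\infty$ is compact this reduces to globally uniform convergence. For any $x \in X_\infty$, because $f_i = f$ on $X_i$ and $\Lip(f) \leq 1$,
\[
\|f(x) - f_i(\pi^\infty_i(x))\|_B \;=\; \|f(x) - f(\pi^\infty_i(x))\|_B \;\leq\; d(x,\pi^\infty_i(x)).
\]
The points $x$ and $\pi^\infty_i(x)$ lie in the common fiber $(\pi^\infty_i)^{-1}(\pi^\infty_i(x))$, so by \eqref{eq:fiberdiam},
\[
d(x,\pi^\infty_i(x)) \;\leq\; 2\delta_i' \,|e_i(\pi^\infty_i(x))| \;\leq\; 2\delta_i' \max_{e \in E(X_i)}|e|.
\]
A short induction using Definition \ref{def:deltaE} (edges of $X_{i+1}$ are edges of $X_i'$ and their opposites, and $\max_{e' \in E(X_i')}|e'| = \delta_i^E \Delta_i^E \leq \delta_i'\max_{e \in E(X_i)}|e|$) yields $\max_{e \in E(X_i)}|e| \leq \prod_{k=0}^{i-1}\delta_k' \to 0$. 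Hence $d(x,\pi^\infty_i(x))$ decays to $0$ uniformly in $x$, which gives the desired uniform convergence.

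This argument is essentially bookkeeping; the substantive inputs are the isometric inclusion $X_i \hookrightarrow X_\infty$ (so that the restrictions $f_i$ inherit the Lipschitz bound without inflation), the fiber-diameter estimate \eqref{eq:fiberdiam}, and the rapid decay of the sequence $\delta_i'$ fixed after Theorem \ref{thm:existinvsys}. The factor of $L$ in the statement of $L$-uniform density is accounted for entirely by the non-isometric embedding $\Lipo{X_i;B}_\pi \hookrightarrow \Lipo{X_\infty;B}$, so no further loss is incurred in the approximation step. I do not anticipate any substantive obstacle.
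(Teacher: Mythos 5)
Your proof is correct and follows the same basic strategy as the paper's: take the approximating sequence $g_i \circ \pi^\infty_i$ with $g_i := f\res{X_i}$, observe that the isometric inclusion $X_i \hookrightarrow X_\infty$ keeps $\Lip(g_i) \leq 1$, and then use \eqref{eq:piLipbnd} to place $g_i \circ \pi^\infty_i$ in the $L$-ball. The only real divergence is in the convergence step. The paper simply notes that $g_i \circ \pi^\infty_i$ converges \emph{pointwise on the dense set $X_{<\infty}$}: for any $x \in X_j$ and $i \geq j$, $\pi^\infty_i(x) = x$ so $g_i(\pi^\infty_i(x)) = f(x)$ \emph{exactly}, and then invokes the equivalence (built into the definition of $L$-uniform density via Arzel\`a--Ascoli) between pointwise convergence on a dense set and uniform convergence for a bounded family of Lipschitz functions. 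You instead prove genuine uniform convergence on all of $X_\infty$ directly, via the fiber-diameter estimate \eqref{eq:fiberdiam} and the induction $\max_{e \in E(X_i)}|e| \leq \prod_{k<i}\delta_k'$. Your route is more quantitative and self-contained (it does not lean on the equivalence stated parenthetically in the definition), at the cost of extra bookkeeping; the paper's route is a one-liner once one trusts that equivalence. Both are sound, and the quantitative decay rate you extract is a harmless bonus.
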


\begin{proof}
Let $f$ be in the unit ball of $\Lipo{X_\infty;B}$. Let $g_i$ be the restriction to $X_i$ of $f$. Then $g_i$ belongs to the unit ball of $\Lipo{X_i;B}$. Then $g_i \comp \pi^\infty_i$ belongs to the ball of radius $L$ of $\cup_{i < \infty} \Lipo{X_i;B}_\pi$. Clearly $g_i \comp \pi^\infty_i$ converges pointwise to $f$ on the dense subset $X_{< \infty}$.
\end{proof}

\begin{proof}[Proof of Theorem \ref{thm:condexpbnd}]
Let $i \geq 0$. Let $f$ be in the unit ball of $\Lipo{X_\infty;B}$. Let $f_j$ be a sequence in the ball of radius $L$ of $\cup_{i < \infty} \Lipo{X_i;B}_\pi$ converging uniformly to $f$, which exists by Lemma \ref{lem:Lunifdense}. Then since $\E^\infty_i$ is bounded on $L^\infty$, $\E^\infty_i(f_j)$ converges uniformly to $\E^\infty_i(f)$. Furthermore, for every $j$, by Lemma \ref{lem:finitecondexpbnd} and \eqref{eq:finitecondexpbnd}, $\|\E^\infty_i(f_j)\|_{\Lipo{X_i;B}} \leq L\|f_j\|_{\Lipo{X_\infty;B}} \leq L^2$. This implies $\|\E^\infty_i(f)\|_{\Lipo{X_i;B}} \leq L^2$.
\end{proof}

\subsubsection{Measure Representation of Conditional Expectation}
We conclude our discussion of conditional expectation with a small theorem we will use once in the proof of Theorem \ref{thm:weakRNPdiff}. We begin with a standard but useful martingale convergence lemma.

\begin{lemma} \label{lem:condexpconv}
For any Lipschitz map $h: X_\infty \to \R$ (not necessarily vanishing at 0) and $i \geq 0$, $\E^\infty_i(h)$ is Lipschitz and $\E^\infty_i(h) \overset{i \to \infty}{\to} h$ uniformly.
\end{lemma}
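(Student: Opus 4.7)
The plan is to handle the two assertions separately. First, to see that $\E^\infty_i(h)$ is Lipschitz, I would decompose $h = (h - h(0)) + h(0)$, so that $h - h(0) \in \Lipo{X_\infty}$. Linearity of conditional expectation, together with the fact that $\E^\infty_i$ sends constants to themselves, reduces the claim to Theorem \ref{thm:condexpbnd}, which delivers $\E^\infty_i(h - h(0)) \in \Lipo{X_i}$ with operator-norm bound $L^2$. Viewing $\E^\infty_i(h)$ on $X_\infty$ via pullback by $\pi^\infty_i$ (the natural identification when comparing to $h$), it is then Lipschitz on $X_\infty$ as well, since $\pi^\infty_i$ is itself Lipschitz.

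For the uniform convergence, the strategy is to approximate $h$ by functions that factor through some $X_j$, and then exploit that $\E^\infty_i$ is invariant on such functions once $i \geq j$. Following the construction in the proof of Lemma \ref{lem:Lunifdense}, set $h_j := (h|_{X_j}) \circ \pi^\infty_j$. By \eqref{eq:fiberdiam}, together with the trivial bound $|e_j(\pi^\infty_j(x))| \leq 1$, every fiber $(\pi^\infty_j)^{-1}(\pi^\infty_j(x))$ has diameter at most $2\delta_j'$, so
\[
|h(x) - h_j(x)| \leq \Lip{h} \cdot d(x,\pi^\infty_j(x)) \leq 2\,\Lip{h}\,\delta_j'.
\]
Since $\delta_j' \to 0$, $h_j \to h$ uniformly on $X_\infty$.

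The key observation is that for $i \geq j$, the compatibility $\pi^\infty_j = \pi^i_j \circ \pi^\infty_i$ rewrites $h_j = ((h|_{X_j}) \circ \pi^i_j) \circ \pi^\infty_i$, so by the defining identity \eqref{eq:condexp} combined with $(\pi^\infty_i)_\# \mu_\infty = \mu_i$, we get $\E^\infty_i(h_j) = (h|_{X_j}) \circ \pi^i_j$ on $X_i$, whose pullback via $\pi^\infty_i$ is exactly $h_j$. Applying $L^\infty$-contractivity of $\E^\infty_i$ and the triangle inequality,
\[
\|\E^\infty_i(h) - h\|_\infty \leq \|\E^\infty_i(h - h_j)\|_\infty + \|\E^\infty_i(h_j) - h_j\|_\infty + \|h_j - h\|_\infty \leq 2\|h - h_j\|_\infty,
\]
which is valid for all $i \geq j$. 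Given $\eps > 0$, choose $j$ so that $\|h - h_j\|_\infty < \eps/2$; then $\|\E^\infty_i(h) - h\|_\infty < \eps$ for every $i \geq j$, as required. The only conceptual subtlety will be bookkeeping: consistently identifying $\E^\infty_i(h) \in \Lipo{X_i}$ with its pullback along $\pi^\infty_i$ whenever comparing against $h$ on $X_\infty$. No substantive obstacle remains.
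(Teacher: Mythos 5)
Your proposal is correct, and the overall skeleton (approximate $h$ by functions factoring through some finite level $X_j$, use that $\E^\infty_i$ is eventually exact on such functions, then invoke uniform $L^\infty$-boundedness of the family $\{\E^\infty_i\}_i$) is the same as the paper's. Where you differ is in how the density of the approximants is established: the paper applies the Stone--Weierstrass theorem to show that $\cup_{j < \infty} C(X_j)_\pi$ is uniformly dense in $C(X_\infty)$, treating $h$ only as continuous; you instead exploit the Lipschitz hypothesis directly, taking $h_j := (h|_{X_j}) \comp \pi^\infty_j$ and using the fiber-diameter bound \eqref{eq:fiberdiam} to get the quantitative estimate $\|h - h_j\|_\infty \leq 2\,\Lip{h}\,\delta_j'$. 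Your argument is more elementary (no appeal to Stone--Weierstrass) and yields an explicit rate of convergence in terms of $\Lip{h}$ and $(\delta_j')$; the paper's argument is slightly more general in that the uniform-convergence part would apply verbatim to any continuous $h$, not only Lipschitz. Both are correct for the lemma as stated. One small bookkeeping point you already flag: one must consistently pull $\E^\infty_i(h)$ back along $\pi^\infty_i$ when comparing sup norms with $h$ on $X_\infty$, and the verification that $\E^\infty_i(h_j)$ pulls back to $h_j$ for $i \geq j$ uses the defining identity \eqref{eq:condexp} together with $(\pi^\infty_i)_\#\mu_\infty = \mu_i$ and full support of $\mu_i$, exactly as you say.
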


\begin{proof}
Let $h: X_\infty \to \R$ be Lipschitz so that $h - h(0) \in \Lipo{X_\infty}$. Then Theorem \ref{thm:condexpbnd} implies $h = \E^\infty_i(h - h(0)) + h(0)$ is Lipschitz.

The Stone-Weierstrass theorem for algebras of continuous functions implies $\cup_{j < \infty} C(X_j)_\pi$ is uniformly dense in $C(X_\infty)$, where $C(X_j)_\pi$ is defined to be the continuous real-valued functions on $X_\infty$ factoring through $X_j$. Then since $\E^\infty_i(h) \overset{i \to \infty}{\to} h$ (since it is eventually constant) for all $h \in \cup_{j < \infty} C(X_j)$, since $\sup_i \|E^\infty_i\|_{L^\infty(\mu_\infty) \to L^\infty(\mu_\infty)} = 1 < \infty$, and since $\mu_\infty$ and $\mu_i$ are fully supported on $X_\infty$ and $X_i$, the claim follows.
\end{proof}

\begin{theorem} \label{thm:condexpmeas}
For each $i \geq 0$, and $p \in X_i$, there exists a unique Borel probability measure $\mu_\infty^p$ supported on $(\pi_i^\infty)^{-1}(p)$ such that for any $h \in C(X_\infty;B)$,
\begin{equation} \label{eq:condexpmeas}
[\E^\infty_{i}(h)](p) = \int_{(\pi_i^\infty)^{-1}(p)} h d\mu_\infty^p
\end{equation}
\end{theorem}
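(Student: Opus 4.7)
The plan is to apply the Riesz--Markov representation theorem to the linear functional $\Lambda_p(h):=[\E^\infty_i(h)](p)$ on $C(X_\infty)$, then localize its support and extend to the Banach-valued setting. The first task is to verify that $\Lambda_p$ is well-defined as a pointwise functional, not merely an equivalence class. For $h\in\Lipo{X_\infty}$, Theorem~\ref{thm:condexpbnd} produces a Lipschitz representative of $\E^\infty_i(h)$, and this representative is unique because $\mu_i$ has full support on $X_i$ (on each edge it is a positive multiple of length measure, by Definition~\ref{def:meas}), so two continuous representatives agreeing $\mu_i$-a.e.\ agree everywhere. For a general $h\in C(X_\infty)$, I approximate uniformly by Lipschitz functions (Stone--Weierstrass, or direct mollification, using compactness of $X_\infty$); their Lipschitz representatives converge uniformly on $X_i$ since $\E^\infty_i$ is contractive on $L^\infty$, so the limit is the continuous representative of $\E^\infty_i(h)$. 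Thus $\Lambda_p$ is well-defined, linear, positive (by monotonicity of conditional expectation), and sends $1$ to $1$ (since $\E^\infty_i(1)=1$).

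Riesz--Markov therefore produces a unique Borel probability measure $\mu_\infty^p$ on $X_\infty$ with $\Lambda_p(h)=\int h\,d\mu_\infty^p$ for every $h\in C(X_\infty)$. To see that $\mu_\infty^p$ is supported on $(\pi^\infty_i)^{-1}(p)$, I invoke the standard ``pulling out'' property: for $\phi\in C(X_i)$ and $h\in L^1(\mu_\infty)$,
\begin{equation*}
\E^\infty_i\bigl((\phi\comp\pi^\infty_i)\,h\bigr)=\phi\cdot\E^\infty_i(h),
\end{equation*}
which follows directly from the defining identity \eqref{eq:condexp} applied with test function $\psi\phi$ in place of $\phi$. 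Evaluating at $p$ gives $\Lambda_p((\phi\comp\pi^\infty_i)h)=\phi(p)\Lambda_p(h)$, which vanishes whenever $\phi(p)=0$. Now given any compact $K\sbs X_\infty\setminus(\pi^\infty_i)^{-1}(p)$, the image $\pi^\infty_i(K)\sbs X_i$ is compact and avoids $p$, so Urysohn's lemma on $X_i$ yields $\phi\in C(X_i)$ with $\phi(p)=0$ and $\phi\equiv 1$ on $\pi^\infty_i(K)$. Any $h\in C(X_\infty)$ supported in $K$ then satisfies $h=(\phi\comp\pi^\infty_i)h$, forcing $\int h\,d\mu_\infty^p=0$. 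Outer regularity of the Radon measure $\mu_\infty^p$ then gives $\mu_\infty^p\bigl(X_\infty\setminus(\pi^\infty_i)^{-1}(p)\bigr)=0$.

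Finally, for the $B$-valued statement: for $h\in C(X_\infty;B)$ the Bochner integral $\int h\,d\mu_\infty^p$ exists since $h$ is bounded and strongly measurable. For each $\xi\in B^*$, the scalar function $\xi\comp h\in C(X_\infty)$, and the standard commutation $\E^\infty_i(\xi\comp h)=\xi\comp\E^\infty_i(h)$ (again a direct consequence of \eqref{eq:condexp} together with the density of simple $B$-valued functions) combined with the scalar case yields
\begin{equation*}
\xi\bigl([\E^\infty_i(h)](p)\bigr)=[\E^\infty_i(\xi\comp h)](p)=\int_{(\pi^\infty_i)^{-1}(p)}\xi\comp h\,d\mu_\infty^p=\xi\!\left(\int_{(\pi^\infty_i)^{-1}(p)} h\,d\mu_\infty^p\right).
\end{equation*}
Since this holds for every $\xi\in B^*$, Hahn--Banach gives \eqref{eq:condexpmeas}. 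Uniqueness of $\mu_\infty^p$ is immediate from the scalar Riesz--Markov. The only genuinely delicate point in this plan is checking that $\Lambda_p$ makes pointwise sense, and this is precisely what Theorem~\ref{thm:condexpbnd} was built to supply; once that hurdle is cleared, every subsequent step is a routine application of Riesz--Markov, Urysohn, and Hahn--Banach.
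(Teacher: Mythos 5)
Your proof is correct and reaches the conclusion, but the support step is carried out by a genuinely different method than the paper's. Both you and the paper begin identically: establish that $\E^\infty_i$ carries continuous functions to (continuous representatives of) continuous functions, so that $\Lambda_p(h) := [\E^\infty_i(h)](p)$ is a well-defined positive linear functional of norm $1$ on $C(X_\infty)$, then invoke Riesz representation, and extend to $B$-valued $h$ by testing against $B^*$. The divergence is in the support argument. The paper introduces the specific witness $h_p(x) := d(x,(\pi_i^\infty)^{-1}(p))$ and shows $[\E^\infty_i(h_p)](p)=0$ by combining the uniform martingale convergence $\E^\infty_j(h_p)\to h_p$ (Lemma~\ref{lem:condexpconv}) with the explicit averaging formula~\eqref{eq:condexpform} for $\E^{j}_i$; this is tailored to the graph structure. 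You instead invoke the general ``pulling-out'' identity $\E^\infty_i\bigl((\phi\comp\pi^\infty_i)h\bigr)=\phi\cdot\E^\infty_i(h)$ plus Urysohn's lemma on $X_i$, which is a cleaner, more structure-agnostic route and sidesteps Lemma~\ref{lem:condexpconv} entirely. Both arguments succeed; the paper's buys a concrete quantitative handle on the fibers, while yours buys generality and brevity.

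Two minor points worth tightening. First, when you write ``Evaluating at $p$ gives $\Lambda_p((\phi\comp\pi^\infty_i)h)=\phi(p)\Lambda_p(h)$,'' the pulling-out identity only holds $\mu_i$-a.e.\ a priori; you need to say explicitly that both sides are continuous on $X_i$ (the left by your well-definedness argument applied to $(\phi\comp\pi^\infty_i)h\in C(X_\infty)$, the right as a product of continuous functions), and that $\mu_i$ has full support, so a.e.\ equality upgrades to pointwise equality. Second, the regularity property you want at the end is inner regularity of the open set $X_\infty\setminus(\pi^\infty_i)^{-1}(p)$ by compact sets (its measure equals $\sup\int h\,d\mu_\infty^p$ over $h\in C_c$ supported inside it), not outer regularity; this is a terminology slip, not a mathematical one.
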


\begin{proof}
Let $p \in X_i$. First we assume $B = \R$. Since, by Lemma \ref{lem:condexpconv} and the usual Stone Weierstrass theorem, $\E^\infty_i$ preserves continuous functions and has uniform-uniform operator norm 1 (since $\mu_\infty$ and $\mu_i$ are fully supported), the map $h \mapsto [\E^\infty_i(h)](p)$ is a norm 1 linear functional on $C(X_\infty)$. Further, if $h \geq 0$, $[\E^\infty_i(h)](p) \geq 0$. Thus, our linear functional is represented by a probability measure $\mu_\infty^p$ on $X_\infty$. It remains to show $\mu_\infty^p$ is supported on $(\pi_i^\infty)^{-1}(p)$. Consider the Lipschitz function $h_p: X_\infty \to R$ defined by $h_p(x) = d(x,(\pi_i^\infty)^{-1}(p))$. This function vanishes on $(\pi_i^\infty)^{-1}(p)$ and is strictly positive on $X_\infty \setminus (\pi_i^\infty)^{-1}(p)$. Thus, it suffices to show $[\E^\infty_i(h_p)](p) = 0$. Let $\eps > 0$. By Lemma \ref{lem:condexpconv}, $\E^\infty_i(h_p) \overset{i \to \infty}{\to} h_p$ uniformly, so there exists $j \geq i$ such that $|[\E^\infty_j(h_p)](x)| < \eps$ for all $x \in (\pi^j_i)^{-1}(p)$ (since $h_p$ vanishes on $(\pi^\infty_i)^{-1}(p))$). Since $\E^\infty_j(h_p)$ is a Lipchitz function on $X_j$, we may apply \eqref{eq:condexpform} (this was originally stated for functions vanishing at 0 but easily extends to the general case) and induction to conclude $|[\E^j_i(\E^\infty_j(h_p))](p)]| < \eps$. Since $\E^\infty_j \comp \E^j_i = \E^\infty_i$, we take $\eps \to 0$ and obtain the desired conclusion for $B = \R$. 

Now we extend to general $B$. Define a map $E: C(X_\infty;B) \to C(X_i;B_{\text{weak}})$ by
$$[E(h)](p) := \int_{(\pi_i^\infty)^{-1}(p)} h d\mu_\infty^p$$
where $B_{\text{weak}}$ indicated the space $B$ equipped with the weak topology. We need to show $E = \E^\infty_i$, which we already know holds for $B = \R$. First, let us quickly verify that $E$ indeed maps into the desired space. Let $h \in C(X_\infty;B)$ and $b^* \in B^*$. By an elementary property of the Bochner integral (see Chapter 1 of \cite{Pi16}, especially (1.7)) and the fact that $E = \E^\infty_i$ on real-valued continuous functions, $b^* \comp E(h) = E(b^* \comp h) = \E^\infty_i(b^* \comp h)$. We already know $\E^\infty_i$ maps real-valued continuous functions to real-valued continuous functions, so this shows $b^* \comp E(h)$ is continuous, completing our verification. By another elementary fact on $B$-valued conditional expectation (again see see Chapter 1 of \cite{Pi16}, (1.7)), $\E^\infty_i(b^* \comp h) = b^* \comp \E^\infty_i(h)$ $\mu_i$-almost everywhere, for every $b^* \in B^*$. Thus, $\mu_i$-almost everywhere, $b^* \comp E(h) = b^* \comp \E^\infty_i(h)$ for every $b^* \in B$, implying $E(h) = \E^\infty_i(h)$ $\mu_i$-almost everywhere. But since both $E(h)$ and $\E^\infty_i(h)$ are continuous functions from $X_i$ into the Hausdorff space $B_{\text{weak}}$, and since $\mu_i$ is fully-supported, $E(h) = \E^\infty_i(h)$ everywhere.
\end{proof}


\subsection{The Derivative and Fundamental Theorem of Calculus}
We define the derivative of Lipschitz functions on $X_\infty$ in this section. To do so, we must (and do) assume that $B$ has the RNP. We also prove an inequality in Theorem \ref{thm:ftc} that should be thought of as an adapted version of the fundamental theorem of calculus.

\begin{definition}
For any $h_i \in \cup_{j < \infty} \Lipo{X_j;B}$, since $X_i$ is a finite graph equipped with a measure mutually absolutely continuous with length measure and with a distance geodesic on edges, the fact that $B$ has the RNP allows us to take the derivative of $h_i$ $\mu_i$-almost everywhere defined by the usual formula $\boldsymbol{h_i'(x)} = \lim_{t \to 0} \frac{h_i(x+t)-h_i(x)}{t}$. We make sense of $x+t$ for $t$ small by identifying the directed edge contained $x$ with an interval, and the limit is an almost everywhere, norm limit. Equivalently, $h_i'$ is characterized by
\begin{equation} \label{eq:derivdef}
\lim_{r \to 0} \sup_{y \in B^i_r(x)} \frac{\|h_i(y)-h_i(x)-h_i'(x)(\pi(y)-\pi(x))\|}{r} = 0
\end{equation}
for $\mu_i$-almost every $x \in X_i$, where $\pi := \pi^\infty_0$. The map $h_i \mapsto h_i'$ is a linear contraction $\Lipo{X_\infty;B} \to L^\infty(\mu_\infty;B)$
\end{definition}

\begin{theorem} \label{thm:derivdef}
There exists a unique bounded linear map $h \mapsto h': \Lipo{X_\infty;B} \to L^\infty(\mu_\infty;B)$, called the \textbf{derivative}, that
\begin{enumerate}
\item satisfies $\E^\infty_i(h)' \overset{i \to \infty}{\to} h'$ $\mu_\infty$-almost everywhere
\item restricts to the usual derivative on $\cup_{j < \infty} \Lipo{X_j;B}$
\item has operator norm bounded by $L^2$.
\end{enumerate}
\end{theorem}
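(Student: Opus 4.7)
The plan is to realize $h'$ as the $\mu_\infty$-almost everywhere limit of a uniformly bounded $B$-valued martingale built from the conditional expectations $h_i := \E^\infty_i(h)$. By Theorem \ref{thm:condexpbnd}, $h_i \in \Lipo{X_i;B}$ with $\|h_i\|_{\Lipo{X_i;B}} \leq L^2 \|h\|_{\Lipo{X_\infty;B}}$. Since each edge of $X_i$ is mapped isometrically onto an interval of $[0,1]$ by $\pi_0^i$, and $\mu_i$ is absolutely continuous with respect to length measure on each edge, the RNP of $B$ produces a derivative $h_i' \in L^\infty(\mu_i;B)$ with $\|h_i'\|_\infty \leq L^2\|h\|_{\Lipo{X_\infty;B}}$. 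Set $g_i := h_i' \comp \pi^\infty_i \in L^\infty(\mu_\infty;B)$; these are uniformly bounded by $L^2\|h\|_{\Lipo{X_\infty;B}}$.

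The key technical step is the identity $h_i' = \E^{i+1}_i(h_{i+1}')$ $\mu_i$-almost everywhere, which promotes $(g_i)$ to a martingale. To verify it, pick $p$ in the interior of an edge $e' \in E(X_i')$ and use Lemma \ref{lem:finitecondexpbnd} to write $h_i(p) = \frac{1}{2}(h_{i+1}(p) + h_{i+1}(p^\op))$. By Axiom \ref{ax:metricpi}, $\pi^{i+1}_i$ collapses $e'_\op$ isometrically and direction-preservingly onto $e'$, so a small displacement $p \mapsto p_t$ along $e'$ induces a displacement $p^\op \mapsto p_t^\op$ along $e'_\op$ with the same $\pi^\infty_0$-parameter increment. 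Dividing the resulting difference quotient by this increment and passing to the limit at Lebesgue-almost every $p$ yields $h_i'(p) = \frac{1}{2}(h_{i+1}'(p) + h_{i+1}'(p^\op))$, which by Lemma \ref{lem:finitecondexpbnd} is exactly $\E^{i+1}_i(h_{i+1}')(p)$; the interval case $e'_\op = e'$ is trivial since then $p^\op = p$. Combined with $(\pi^{i+1}_i)_\#\mu_{i+1} = \mu_i$ and the defining identity \eqref{eq:condexp}, this shows $(g_i)$ is a martingale on $(X_\infty,\mu_\infty)$ adapted to the filtration $\F_i := (\pi^\infty_i)^{-1}(\B(X_i))$.

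Because $B$ has the RNP and $(g_i)$ is $L^\infty$-bounded, the vector-valued martingale convergence theorem gives $\mu_\infty$-almost everywhere convergence, and I define $h' := \lim_i g_i$. Property (1) holds by construction, and (3) follows since the a.e.\ pointwise limit preserves the uniform $L^\infty$ bound. Linearity descends from the linearity of $\E^\infty_i$ and of the classical derivative on each $X_i$, and uniqueness is immediate from (1). For (2), if $h = f \comp \pi^\infty_j$ with $f \in \Lipo{X_j;B}$, then the defining identity \eqref{eq:condexp} together with the pushforward identity \eqref{eq:measpush} forces $h_i = f \comp \pi^i_j$ for all $i \geq j$; since $\pi^i_j$ is direction-preserving and isometric on every edge of $X_i$, the classical chain rule gives $h_i' = f' \comp \pi^i_j$, so $g_i = f' \comp \pi^\infty_j$ eventually in $i$ and hence $h' = f' \comp \pi^\infty_j$ agrees with the classical derivative.

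The delicate step is the pointwise chain-rule identity $h_i' = \E^{i+1}_i(h_{i+1}')$, which hinges on the interplay between the opposite-point structure from Axiom \ref{ax:graph2}\ref{subax:graph2opintcirc} and the direction-preserving isometry property of $\pi^{i+1}_i$ on opposite edges from Axiom \ref{ax:metricpi}. Once that identity is established, everything else reduces to Theorem \ref{thm:condexpbnd}, the pushforward identity \eqref{eq:measpush}, and the standard martingale convergence theorem for RNP Banach spaces.
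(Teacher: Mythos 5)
Your proof is essentially correct and takes the same overall route as the paper: realize $h'$ as the $\mu_\infty$-a.e.\ limit of the uniformly $L^\infty$-bounded martingale $(h_i' \comp \pi^\infty_i)_i$, using RNP martingale convergence, with Theorem \ref{thm:condexpbnd} supplying the uniform $L^2$ bound. The only difference is how the martingale identity $\E^{i+1}_i(h_{i+1}') = h_i'$ is established. The paper passes to the limit of the difference quotients in $L^1(\mu_{i+1};B)$ (justified by the DCT, since the quotients are uniformly bounded), then uses $L^1$-continuity of $\E^{i+1}_i$ together with the observation, verified from \eqref{eq:condexpform}, that conditional expectation commutes with precomposition by translations. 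You instead argue pointwise Lebesgue-a.e., using \eqref{eq:condexpform} directly and the fact that $p \mapsto p^\op$ is a direction-preserving isometry commuting with the edge parametrization, so that both $h_{i+1}'(p)$ and $h_{i+1}'(p^\op)$ exist for a.e.\ $p$ and the difference quotient of $h_i$ splits accordingly. These two justifications encode the same geometric fact (translation-equivariance of the opposite-point map) and are both valid. One small point worth stating explicitly if you write this up: you apply the formula $[\E^{i+1}_i(h)](p) = \tfrac12(h(p) + h(p^\op))$ to $h = h_{i+1}' \in L^\infty(\mu_{i+1};B)$, whereas Lemma \ref{lem:finitecondexpbnd} is stated only for $h \in \Lipo{X_{i+1};B}$; the formula does hold $\mu_i$-a.e.\ for general $L^1$ functions by the same verification against \eqref{eq:condexp}, but it's worth noting rather than quoting the lemma out of scope. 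Your explicit treatment of $g_i := h_i' \comp \pi^\infty_i$ and the filtration $\F_i := (\pi^\infty_i)^{-1}(\B(X_i))$ is a welcome clarification of a step the paper leaves implicit.
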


\begin{proof}
Note that uniqueness and the second statement already follow from the first statement. Let $h \in$ Lip$_0(X_\infty;B)$ with $\|h\|_{\Lipo{X_\infty;B}} \leq 1$, and for any $i \geq 0$, let $h_i := \mathbb{E}^{\infty}_{i}(h)$, so that $\|h_i\|_{\Lipo{X_i;B}} \leq L^2$ (by Theorem \ref{thm:condexpbnd}). Then the intermediate averages $x \mapsto \frac{h_i(x+t)-h_i(x)}{t}$ are uniformly (in $t$) $L^\infty(\mu_i;B)$-bounded by $L^2$. The DCT then implies that $\frac{h_i(\cdot+t)-h_i(\cdot)}{t} \overset{t \to 0}{\to} h_i'(\cdot)$ in $L^1(\mu_i;B)$. Then since the conditional expectation $\mathbb{E}^{{i+1}}_{i}: L^1(\mu_{i+1};B) \to L^1(\mu_{i};B)$ is continuous,
$$\mathbb{E}^{i+1}_{i}(h_{i+1}') = \mathbb{E}^{i+1}_{i}\left(\lim_{t \to 0}\frac{h_{i+1}(\cdot+t)-h_{i+1}}{t}\right) = \lim_{t \to 0}\mathbb{E}^{i+1}_{i}\left(\frac{h_{i+1}(\cdot+t)-h_{i+1}}{t}\right)$$
$$= \lim_{t \to 0}\frac{\left[\mathbb{E}^{i+i}_{i}(h_{i+1})\right](\cdot+t)-\left[\mathbb{E}^{i+i}_{i}(h_{i+1})\right]}{t} = \lim_{t \to 0} \frac{h_i(\cdot + t)-h_i}{t} = h_i'$$
The second to last equality says that conditional expectation commutes with precomposition with a translation, which can be directly verified by \eqref{eq:condexpform}. Thus, the sequence $(h_i')_{i=0}^\infty$ forms a martingale uniformly bounded in $L^\infty(\mu_\infty;B)$ by $L^2$. Since $B$ has the RNP property, the martingale converges $\mu_\infty$-almost everywhere to some function in $L^\infty(\mu_\infty;B)$ with norm bounded by $L^2$. We define $h'$ to be this limit.
\end{proof}

\begin{theorem}[Fundamental Theorem of Calculus] \label{thm:ftc}
For all $g \in \Lipo{X_\infty;B}$, $i \geq 1$, $e \in E(X_{i-1})$, and $x,y \in (\pi_{i-1}^i)^{-1}(e)$,
\begin{equation*}
\left\|[\E^\infty_i(g)](y)-[\E^\infty_i(g)](x)\right\| \leq 2|y-x|\fint_{[x,y]} \E^\infty_i(\|g'\|) d\mu_i
\end{equation*}
\end{theorem}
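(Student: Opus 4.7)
My plan is to set $h := \E^\infty_i(g) \in \Lipo{X_i;B}$, which is well-defined by Theorem \ref{thm:condexpbnd}, and to prove the inequality by combining a Bochner fundamental theorem of calculus along the shortest path $[x,y]$ with a comparison between length measure and $\mu_i$, after first identifying the derivative $h'$ with $\E^\infty_i(g')$. Fix an arc-length parametrization $\gamma : [0,|y-x|] \to [x,y] \subseteq (\pi^i_{i-1})^{-1}(e)$. Since $X_i$ is a finite graph and $h$ is Lipschitz into the RNP space $B$, the composition $h\circ\gamma$ is a Lipschitz curve in $B$, so the classical Bochner FTC yields $h(y) - h(x) = \int_0^{|y-x|}(h\circ\gamma)'(s)\,ds$. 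Because $\gamma$ traverses each directed subedge of $X_i$ isometrically (possibly against the orientation), at a.e.\ $s$ we have $\|(h\circ\gamma)'(s)\| = \|h'(\gamma(s))\|$, giving $\|h(y)-h(x)\| \leq \int_{[x,y]}\|h'\|\,ds$, where $ds$ denotes $1$-dimensional length measure on $[x,y]$.

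Next I claim that $h' = \E^\infty_i(g')$ $\mu_i$-almost everywhere. As shown in the proof of Theorem \ref{thm:derivdef}, with $g_j := \E^\infty_j(g)$, the sequence $(g_j')_{j\geq i}$ is a uniformly $L^\infty(\mu_\infty;B)$-bounded martingale converging $\mu_\infty$-a.e.\ to $g'$ by the RNP. The martingale identity $\E^j_i(g_j') = g_i'$ combined with RNP martingale convergence then yields $\E^\infty_i(g') = g_i' = h'$, and Jensen's inequality for the Bochner conditional expectation gives $\|h'\| = \|\E^\infty_i(g')\| \leq \E^\infty_i(\|g'\|)$ $\mu_i$-a.e.

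Finally I convert between length measure and $\mu_i$. By Definition \ref{def:meas}, on $(\pi^i_{i-1})^{-1}(e)$ the measure $\mu_i$ is a piecewise-constant multiple of length measure, with density taking values in $\{c, c/2\}$, where $c$ is the density of $\mu_{i-1}$ on $e$. Thus $\tfrac{c}{2}\,ds \leq d\mu_i \leq c\,ds$ on $[x,y]$, which gives $\int_{[x,y]} F\,ds \leq \tfrac{2}{c}\int_{[x,y]} F\,d\mu_i$ for any nonnegative Borel $F$, and $\mu_i([x,y]) \leq c\,|y-x|$. Combining the three ingredients,
\begin{equation*}
\|h(y)-h(x)\| \leq \int_{[x,y]}\E^\infty_i(\|g'\|)\,ds \leq \frac{2\mu_i([x,y])}{c}\fint_{[x,y]}\E^\infty_i(\|g'\|)\,d\mu_i \leq 2|y-x|\fint_{[x,y]}\E^\infty_i(\|g'\|)\,d\mu_i.
\end{equation*}
The main technical point is the identity $h' = \E^\infty_i(g')$, i.e., commuting the derivative with the conditional expectation; this is precisely what the martingale structure built in Theorem \ref{thm:derivdef} supplies. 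Without it, the FTC bound on $\|h'\|$ could not be upgraded to a bound involving $\E^\infty_i(\|g'\|)$, and the rest of the argument (the Bochner FTC and the density comparison) is routine once $h$ is known to be Lipschitz on $X_i$.
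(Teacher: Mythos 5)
Your proof is correct and takes essentially the same approach as the paper's: apply the Bochner fundamental theorem of calculus to $h := \E^\infty_i(g)$ along the shortest path $[x,y]$, identify $h'$ with $\E^\infty_i(g')$ via the martingale structure from Theorem~\ref{thm:derivdef}, and convert between length measure and $\mu_i$ using the density comparison from the proof of Lemma~\ref{lem:doubling}. Two cosmetic differences worth noting: the paper handles the possible orientation reversal of $[x,y]$ by splitting at an intermediate vertex $z$ rather than via an arc-length parametrization, and the paper's displayed chain ends at $\|\E^\infty_i(g')\|$ without spelling out the Jensen step $\|\E^\infty_i(g')\| \leq \E^\infty_i(\|g'\|)$, which you correctly make explicit.
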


\begin{proof}
Let $g$, $i$, $e$ and $x,y$ be as above. Set $g_i := \E^\infty_i(g)$. First assume that $x$ and $y$ belong to a directed edge path. Then the usual Lebesgue fundamental theorem of calculus implies $\int_x^y g_i' ds = g_i(y)-g_i(x)$, where, for any positive Radon $\nu$ on $X_i$ and $f \in L^1(X_i,\nu;B)$, $\int_x^y f d\nu$ is interpreted as $\int_{[x,y]} f d\nu$ if $x \leq y$ along the path, and $-\int_{[y,x]} f d\nu$ if $y \leq x$. If $x$ and $y$ don't belong to a directed edge path, there exists an intermediate point $z$ on the shortest path from $x$ to $y$ such that the path is directed from $x$ to $z$, and then anti-directed from $z$ to $y$, or vice versa. We then still have $\int_x^y g_i' ds = g_i(y)-g_i(x)$ if we interpret $\int_x^y f d\nu$ as $\int_{[x,z]} f d\nu - \int_{[y,z]} f d\nu$ if $x \leq z$ and $y \leq z$ or $-\int_{[z,x]} f d\nu + \int_{[z,y]} f d\nu$ if $z \leq x$ and $z \leq y$. For future use, we also note that $\left\|\int_x^y f d\nu\right\| \leq \int_{[x,y]} \|f\| d\nu$.

As explained in the proof of Lemma \ref{lem:doubling}, $\mu_i$ restricted to $[x,y] \sbs (\pi_{i-1}^i)^{-1}(e)$ is bounded below by $\frac{c}{2}$ times length measure and above by $c$ times length measure. This implies that for any $f \in L^1(\mu_i)$ with $f \geq 0$, we have
$$\frac{1}{2}\fint_{[x,y]} f ds \leq |y-x|\fint_{[x,y]} f d\mu_i \leq 2\fint_{[x,y]} f ds$$
Combining the last two paragraphs yields:
$$\|g_i(y)-g_i(x)\| = \left\|\int_x^y g_i' ds\right\| \leq \int_{[x,y]} \|g_i'\| ds \leq 2|y-x|\fint_{[x,y]} \|g_i'\| d\mu_i$$
$$ = 2|y-x|\fint_{[x,y]} \|\E^\infty_i(g)'\| d\mu_i = 2|y-x|\fint_{[x,y]} \|\E^\infty_i(g')\| d\mu_i$$
\end{proof}


\section{Maximal Operator and $L^1 \to L^{1,w}$ Inequality}
\label{sec:maximalineq}
\begin{definition}
Let $i \geq 0$ and $h_i \in L^1(\mu_i)$. For any nonvertex $x_i \in X_i$ and $i \geq 0$, define
\begin{equation*}
[M_i(h_i)](x_i) := \left(\sup_{y_i \in (\pi^i_{i-1})^{-1}(e_{i-1}(x_i))} \fint_{[x_i,y_i]} |h_i|d\mu_i\right)
\end{equation*}

Now let $h \in L^1(\mu_\infty)$, and set $h_i := \E^\infty_i(h)$. For any nonvertex $x \in X_\infty$, set $x_i := \pi_i^\infty(x)$ and define the \textbf{maximal function}

\begin{equation} \label{eq:maximaldef}
[\boldsymbol{M(h)}](x) := \sup_{i \geq 0} [M_i(h_i)](x_i)
\end{equation}
\end{definition}

\begin{theorem}[Maximal Inequality] \label{thm:maximalineq}
There exists a constant $C \geq 1$ such that for any $h \in L^1(\mu_\infty;B)$ and $p \in (1,\infty]$,
\begin{equation}
\|M(\|h\|)\|_{L^{1,w}(\mu_\infty)} \leq \frac{Cp}{p-1}\|h\|_{L^p(\mu_\infty;B)}
\end{equation}
\end{theorem}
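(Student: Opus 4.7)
The plan is to interpolate between an $L^\infty$ endpoint and a weak $(1,1)$ endpoint, then exploit the fact that $\mu_\infty$ is a probability measure. Since $\|g\|_{L^{1,w}(\mu_\infty)} \le \|g\|_{L^1(\mu_\infty)} \le \|g\|_{L^p(\mu_\infty)}$ on a probability space, it suffices to establish the strong $(p,p)$ bound $\|M(f)\|_{L^p(\mu_\infty)} \le \frac{Cp}{p-1}\|f\|_{L^p(\mu_\infty)}$ for $f = \|h\|$. By Marcinkiewicz interpolation, this follows from two endpoint bounds with absolute constants: the trivial bound $\|M(f)\|_\infty \le \|f\|_\infty$ (immediate from $L^\infty$-contractivity of $\E^\infty_i$ and of averaging), and the weak $(1,1)$ estimate $\lambda\mu_\infty\{M(f)>\lambda\} \le C\|f\|_{L^1(\mu_\infty)}$.

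For a level-wise weak $(1,1)$ bound, Lemma \ref{lem:doubling} is the workhorse. Part (1) provides uniform $C_0$-doubling of $\mu_i$ on every fiber $(\pi^i_{i-1})^{-1}(e)$, and part (2) gives $\mu_i(B_r(x_i)) \le 4\mu_i([x_i,y_i])$ when $r = |x_i - y_i|$. Together these show $M_i(f_i) \le 4\tilde M_i(f_i)$, where $\tilde M_i$ is the centered Hardy--Littlewood maximal operator within the fiber. The standard Vitali-covering argument on a uniformly doubling space yields weak $(1,1)$ for $\tilde M_i$, hence for $M_i$, with a constant depending only on $C_0$ and uniform in $i$; $L^1$-contractivity of $\E^\infty_i$ then passes $\|f_i\|_{L^1(\mu_i)}$ back to $\|f\|_{L^1(\mu_\infty)}$.

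To pass from these level-wise bounds to global weak $(1,1)$, I would use a stopping-time/Vitali argument against the martingale filtration $\F_i = \sigma(\pi^\infty_i)$. For $\lambda > 0$, set $\tau(x) := \inf\{i : M_i(f_i)(\pi^\infty_i(x)) > \lambda\}$; then $\{M(f)>\lambda\} = \bigsqcup_i \{\tau = i\}$, with each piece $\F_i$-measurable, so $\{\tau = i\} = (\pi^\infty_i)^{-1}(S_i)$ for a Borel $S_i \sbs X_i$. Within each $X_i$ one extracts, via a Vitali covering using fiber doubling, pairwise disjoint segments $\{[a^i_k,b^i_k]\}_k$ with $\fint_{[a^i_k,b^i_k]} f_i\,d\mu_i > \lambda$ whose $5$-enlargements cover $S_i$. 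The defining property of $\E^\infty_i$ translates mass to $\mu_\infty$: $\int_{[a^i_k,b^i_k]} f_i\,d\mu_i = \int_{(\pi^\infty_i)^{-1}([a^i_k,b^i_k])} f\,d\mu_\infty$, yielding $\lambda\mu_\infty\{\tau=i\} \le C\sum_k \int_{(\pi^\infty_i)^{-1}([a^i_k,b^i_k])} f\,d\mu_\infty$.

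The main obstacle will be summing these level-wise estimates without overcounting, as the preimages across different levels $i$ are not a priori disjoint. The key structural observation I would exploit is that any $x$ lying in $(\pi^\infty_i)^{-1}([a^i_k,b^i_k])$ has $x_i \in [a^i_k,b^i_k]$, and splitting the segment at $x_i$ forces one of $\fint_{[a^i_k,x_i]} f_i\,d\mu_i$, $\fint_{[x_i,b^i_k]} f_i\,d\mu_i$ to exceed $\lambda$ by pigeonhole, hence $M_i(f_i)(x_i) > \lambda$ and $\tau(x) \le i$. Thus the level-$i$ Vitali preimages sit inside $\{\tau \le i\}$. Combined with the nesting of level cells $(\pi^\infty_{i-1})^{-1}(e_{i-1}(x)) \sps (\pi^\infty_i)^{-1}(e_i(x))$ and disjointness of $\{[a^i_k,b^i_k]\}_k$ within each level, this should yield, via a telescoping or Calder\'on--Zygmund-style bounded-overlap count, $\sum_{i,k}\int_{(\pi^\infty_i)^{-1}([a^i_k,b^i_k])} f\,d\mu_\infty \le C'\|f\|_{L^1(\mu_\infty)}$, completing the weak $(1,1)$ bound and hence the theorem.
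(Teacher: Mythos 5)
Your high-level strategy (establish a weak $(1,1)$ bound for $M$ and then conclude on the probability space) is sound, and in fact you do not even need the Marcinkiewicz interpolation step: once $\lambda\mu_\infty\{M(f)>\lambda\}\le C\|f\|_{L^1(\mu_\infty)}$ is known, the chain $\|M(f)\|_{L^{1,w}(\mu_\infty)}\le C\|f\|_{L^1(\mu_\infty)}\le C\|f\|_{L^p(\mu_\infty)}$ already gives the theorem (even without the factor $\tfrac{p}{p-1}$). Your pigeonhole observation is also correct: if $x_i\in[a^i_k,b^i_k]$ and $\fint_{[a^i_k,b^i_k]}f_i\,d\mu_i>\lambda$, then splitting the segment at $x_i$ shows $M_i(f_i)(x_i)>\lambda$, hence $\tau(x)\le i$.

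The genuine gap is in the last paragraph. What you need is
$\sum_{i,k}\int_{(\pi^\infty_i)^{-1}([a^i_k,b^i_k])}f\,d\mu_\infty\lesssim\|f\|_{L^1(\mu_\infty)}$,
and the only structural fact you derive about the Vitali preimages is that $(\pi^\infty_i)^{-1}([a^i_k,b^i_k])\subseteq\{\tau\le i\}$. That containment does not control the overlap: a single point $x$ can belong to $(\pi^\infty_i)^{-1}([a^i_{k(i)},b^i_{k(i)}])$ for every $i\ge\tau(x)$, so the multiplicity is a priori unbounded and no telescoping or bounded-overlap count falls out of what you wrote. The disjointness you have is only \emph{within} each level $i$; the stopping time $\tau$ partitions $X_\infty$ by $\{\tau=i\}$, but your Vitali cylinders land in $\{\tau\le i\}$, not $\{\tau=i\}$, so they are not aligned with that partition. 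In short, the "bounded-overlap count" is the missing idea, not a routine step.

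The paper sidesteps this entirely via Lemma~\ref{lem:coveringlemma}, whose mechanism is different from yours: it first passes to a maximal, essentially pairwise disjoint subfamily of the filtration atoms $(\pi^\infty_{i-1})^{-1}(e_\gamma)$ (atoms at different levels are automatically nested or disjoint, so taking maximal ones is a clean selection), and then performs a \emph{single-level} Vitali covering inside each chosen atom, using the doubling bounds of Lemma~\ref{lem:doubling}. Because the atoms are disjoint and the Vitali segments inside one atom live at a single scale, there is no cross-level overcounting to estimate at all. After the covering lemma, the paper bounds $\|M(\|h\|)\|_{L^{1,w}(\mu_\infty)}$ by $C\|h^*\|_{L^1(\mu_\infty)}$ with $h^*$ the Doob maximal function, and then applies Doob's maximal inequality to obtain the $\tfrac{p}{p-1}$ factor. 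So the paper does not prove strong $(p,p)$ for $M$ at all; it imports the $p$-dependence from the martingale theory. If you want to complete your route, the cleanest fix is to replace the stopping-time/telescoping idea with the paper's atom-first selection, after which the rest of your outline goes through.
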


\begin{proof}
As is typical, the proof is an application of a relevant covering lemma, Lemma \ref{lem:coveringlemma}, which we state and prove following this proof. This lemma is a combination of the Vitali covering lemma for doubling metric measure spaces and the covering lemma for atoms in a filtration of finite $\sigma$-algebras. Let $h \in L^1(\mu_\infty;B)$, $h_i := \E^\infty_i(h)$, and $p \in (1,\infty]$. After making the usual ``covering lemma-to-maximal inequality" argument, we will have a $C \geq 1$ (independent of $h$ or $p$, given to us by Lemma \ref{lem:coveringlemma}) such that
$$\|M(\|h\|)\|_{L^{1,w}(\mu_\infty)} \leq C\|h^*\|_{L^1(\mu_\infty)}$$
where $h^*$ is Doob's maximal function; $h^*(x) := \sup_{i \geq 0} \|h_i\|(x)$. By Doob's maximal inequality (see Theorem 1.25 of \cite{Pi16}),
$$\|h^*\|_{L^p(\mu_\infty)} \leq \frac{p}{p-1}\|h\|_{L^p(\mu_\infty;B)}$$
Combining these two inequalities with the simple inequality $\|h^*\|_{L^1(\mu_\infty)} \leq \|h^*\|_{L^p(\mu_\infty)}$ yields the desired conclusion.
\end{proof}

\begin{lemma}[Covering Lemma] \label{lem:coveringlemma}
Let $\Gamma$ be a collection of closed subsets of $X_\infty$, such that for each $\gamma \in \Gamma$, there is an $i \geq 1$, a (not necessarily directed) shortest path $[p_\gamma,q_\gamma] \sbs X_i$, and an edge $e_\gamma \in X_{i-1}$ such that:
\begin{itemize}
\item $\gamma = (\pi_i^{\infty})^{-1}([p_\gamma,q_\gamma])$
\item $[p_\gamma,q_\gamma]$ is completely contained in $(\pi_{i-1}^i)^{-1}(e_\gamma)$.
\end{itemize}
Then there exists a subfamily $\Gamma' \sbs \Gamma$, such that
\begin{itemize}
\item The sets in $\Gamma'$ are essentially pairwise disjoint
\item For each $\gamma' \in \Gamma'$, there exists a closed set containing $\gamma'$, denoted $\gamma'_C$, such that $\bigcup_{\gamma' \in \Gamma'} \gamma'_C \sps \bigcup \Gamma$ and $\mu_\infty(\gamma'_C) \leq C\mu_\infty(\gamma')$.
\end{itemize}
\end{lemma}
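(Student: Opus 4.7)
The plan is a Vitali-type greedy selection on $\Gamma$ that combines the length-metric doubling of $\mu_i$ on each fiber $(\pi^i_{i-1})^{-1}(e)$ from Lemma \ref{lem:doubling} with the uniform Lipschitz bound \eqref{eq:piLipbnd} on the projections $\pi^j_i$. For each $\gamma \in \Gamma$, write $i(\gamma)$ for its level and $r(\gamma) := |p_\gamma - q_\gamma|$ for the length of the defining shortest path. After reducing to a countable subfamily by separability of $X_\infty$, I would order $\Gamma$ by decreasing $r(\gamma)$ (breaking ties by smaller $i(\gamma)$) and iterate: accept the next $\gamma$ into $\Gamma'$ if it has $\mu_\infty$-null intersection with every previously accepted set, and discard it otherwise. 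This yields an essentially pairwise disjoint subfamily $\Gamma'$ such that every $\gamma \in \Gamma \setminus \Gamma'$ has positive-measure intersection with some $\gamma' \in \Gamma'$ satisfying $r(\gamma') \geq r(\gamma)$.

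Next, for each $\gamma' \in \Gamma'$ let $F_{\gamma'} := (\pi^{i(\gamma')}_{i(\gamma')-1})^{-1}(e_{\gamma'})$ be the ambient fiber and $E_{\gamma'} \subseteq F_{\gamma'}$ the closed length-metric $KLr(\gamma')$-neighborhood of $[p_{\gamma'}, q_{\gamma'}]$ for a universal constant $K$ to be fixed at the end. Set $\gamma'_C := (\pi^\infty_{i(\gamma')})^{-1}(E_{\gamma'})$. Iterating the $C$-doubling in Lemma \ref{lem:doubling}(1) a bounded number of times and applying part (2) to compare $\mu_{i(\gamma')}([p_{\gamma'}, q_{\gamma'}])$ with length measure yield $\mu_{i(\gamma')}(E_{\gamma'}) \leq C' \mu_{i(\gamma')}([p_{\gamma'}, q_{\gamma'}])$ for a universal $C'$, and the pushforward identity \eqref{eq:measpush} converts this to $\mu_\infty(\gamma'_C) \leq C'\mu_\infty(\gamma')$.

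The containment $\bigcup_{\gamma' \in \Gamma'} \gamma'_C \supseteq \bigcup \Gamma$ then splits into cases based on the relation between $i = i(\gamma)$ and $i' = i(\gamma')$. The positive-measure intersection of $\gamma$ with $\gamma'$ forces the ambient fibers $F_\gamma$ and $F_{\gamma'}$ to be nested, since fibers at a fixed level essentially partition $X_\infty$ and cross-level fibers are either nested or essentially disjoint. When $i = i'$ and $F_\gamma = F_{\gamma'}$, a classical Vitali argument inside the doubling fiber places $[p_\gamma, q_\gamma]$ in the length-metric $5r(\gamma')$-neighborhood of $[p_{\gamma'}, q_{\gamma'}]$, so $\gamma \subseteq \gamma'_C$ once $KL \geq 5$. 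When $i > i'$ and $F_\gamma \subsetneq F_{\gamma'}$, the Lipschitz estimate \eqref{eq:piLipbnd} gives $\text{diam}_d(\pi^i_{i'}([p_\gamma, q_\gamma])) \leq Lr(\gamma) \leq Lr(\gamma')$; since this image meets $[p_{\gamma'}, q_{\gamma'}]$, it lies in the length-$(L+1)r(\gamma')$-neighborhood of $[p_{\gamma'}, q_{\gamma'}]$, placing $\gamma \subseteq \gamma'_C$ once $K \geq L+1$.

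The main obstacle is the remaining case $i < i'$, where $\gamma'$ sits at a strictly finer level than $\gamma$ inside a nested fiber $F_{\gamma'} \subsetneq F_\gamma$ and $\gamma$ could a priori extend far outside any modest enlargement of $\gamma'$. The rescue uses the tie-breaking in the greedy selection, which prevents $r(\gamma) = r(\gamma')$ with $i' > i$, together with the iterated edge-length bound $|e_{\gamma'}| \leq \delta'_{i-1}\delta'_i \cdots \delta'_{i'-2}|e_\gamma|$ from repeated application of Definition \ref{def:deltaE}. Combined with $r(\gamma') \leq |e_{\gamma'}|$ and $r(\gamma') \geq r(\gamma)$, this forces $r(\gamma)$ to be very small compared to $|e_\gamma|$; plugging into the fiber-diameter estimate \eqref{eq:fiberdiam} and using the summability $\sum_i \delta'_i < \infty$ (implied by the standing hypothesis $\prod_i (1-2\delta'_i)^{-1} \leq L$) shows that $\text{diam}_d(\gamma)$ is still controlled by a universal multiple of $r(\gamma')$. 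Choosing $K$ large enough and, if necessary, redefining $\gamma'_C$ via a length-metric neighborhood inside the parent fiber $F_\gamma$ rather than $F_{\gamma'}$ closes the cover, and the measure bound $\mu_\infty(\gamma'_C) \leq C' \mu_\infty(\gamma')$ persists by applying Lemma \ref{lem:doubling} at the coarser level $i$.
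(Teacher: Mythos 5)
Your single greedy Vitali selection run across all levels simultaneously does not close the case you correctly flag as the main obstacle, $i < i'$, and the rescue you propose does not repair it.

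First, the geometric containment fails. When $\gamma$ at level $i$ is rejected in favor of an accepted $\gamma'$ at a strictly finer level $i' > i$, you need $\gamma \subseteq \gamma'_C$. But $\gamma = (\pi^\infty_i)^{-1}([p_\gamma,q_\gamma])$ is the preimage of a path at level $i$, and by the fiber-diameter bound \eqref{eq:fiberdiam} the $d$-diameter of each fiber $(\pi^\infty_i)^{-1}(x_i)$ over $x_i \in [p_\gamma,q_\gamma]$ is of order $\delta'_i |e_i(p_\gamma)| \approx \delta'_i\delta'_{i-1}|e_\gamma|$. Your chain $r(\gamma) \le r(\gamma') \le |e_{\gamma'}| \le \delta'_{i-1}\cdots\delta'_{i'-2}|e_\gamma|$ gives an \emph{upper} bound on $r(\gamma')$ in terms of $|e_\gamma|$, not a lower bound; the shortest path $[p_{\gamma'},q_{\gamma'}]$ can be as short as one likes. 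So $\delta'_i\delta'_{i-1}|e_\gamma|$ can be arbitrarily large relative to $r(\gamma')$: the set $\gamma$ is thick at a scale set by $|e_\gamma|$, which is invisible to $r(\gamma')$, and no universal enlargement of $\gamma'$ inside the fine fiber $F_{\gamma'}$ can contain it. The tie-break and the summability of $(\delta'_i)$ do not produce the missing lower bound.

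Second, the alternative you raise at the end --- defining $\gamma'_C$ as a length-metric neighborhood inside the coarse fiber $F_\gamma$ --- destroys the measure inequality. If $\gamma'_C = (\pi^\infty_i)^{-1}(E)$ with $E$ a ball in $(\pi^i_{i-1})^{-1}(e_\gamma)$, then $\mu_\infty(\gamma'_C) = \mu_i(E)$ by \eqref{eq:measpush}, while $\mu_\infty(\gamma') = \mu_{i'}([p_{\gamma'},q_{\gamma'}])$. By Definition \ref{def:meas}, every circle traversed between levels $i$ and $i'$ halves the measure density, so the ratio of the $\mu_i$-density on $e_\gamma$ to the $\mu_{i'}$-density on $e_{\gamma'}$ can be as large as $2^{\,i'-i}$, unbounded in $i'-i$. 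No universal constant $C$ gives $\mu_i(E) \le C\mu_{i'}([p_{\gamma'},q_{\gamma'}])$.

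These two difficulties are exactly why the paper runs two separate covering arguments rather than one. It first applies the covering lemma for atoms of the filtration $(\mathcal{A}_i)$ to the sets $(\pi^\infty_{i(\gamma)-1})^{-1}(e_\gamma)$, extracting an essentially disjoint subcollection of fibers covering $\bigcup\Gamma$; this step is purely combinatorial and eliminates cross-level competition. Only afterward, inside a single selected fiber $(\pi^i_{i-1})^{-1}(e)$ at its fixed level $i$, does it apply the $5r$-covering lemma, where Lemma \ref{lem:doubling} supplies doubling and $\mu_i$ restricted to that fiber is a fixed constant multiple of length measure (up to a factor $2$), so ball-measure comparisons are uniform. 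Your one-pass greedy construction collapses these two stages into one, and the mixed case $i<i'$ is precisely where the two scales --- position within a fiber and depth in the filtration --- interfere in a way your neighborhood enlargements cannot absorb. To repair the argument you would have to first restrict the competition to a single filtration atom before running the ball selection, which is the paper's two-stage structure.
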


\begin{proof}
First, consider the collection of sets $E_\Gamma := \{(\pi_{i-1}^\infty)^{-1}(e_\gamma)\}_{\gamma \in \Gamma}$. This set covers $\bigcup \Gamma$ by assumption. It is a collection of atoms in the filtration $(\A_i)_{i=0}^\infty$, where $\A_i$ is the $\sigma$ algebra on $X_\infty$ generated by preimages of edges in $E(X_i)$ under the map $\pi_i^\infty$. Thus we may find an essentially disjoint subcollection that still covers $\cup \Gamma$. We consider a single one these sets, $(\pi_{i-1}^\infty)^{-1}(e)$. Let $\Gamma_e$ be the collection of those $\gamma \in \Gamma$ with $[p_\gamma,q_\gamma] \sbs (\pi_{i-1}^i)^{-1}(e)$. Since preimages under $\pi_i^\infty$ preserve unions and essential disjointness, it suffices to work directly with the paths $[p_\gamma,q_\gamma]$. $[p_\gamma,q_\gamma]$ is contained in a geodesic ball $B_r(p_\gamma)$, where $r = |p_\gamma-q_\gamma|$. By Lemma \ref{lem:doubling}, $\mu_i(B_r(p_\gamma)) \leq 4\mu_i([p_\gamma,q_\gamma])$. By the 5r covering lemma, we can then find a pairwise disjoint subcollection of $\{B_r(p_\gamma)\}_{\gamma \in \Gamma_e}$, say $\{B_r(p_{\gamma'})\}_{\gamma' \in \Gamma_e'}$, such that $\{B_{5r}(p_\gamma')\}_{\gamma' \in \Gamma_e'}$ covers $\bigcup\{B_r(p_\gamma)\}_{\gamma \in \Gamma_e}$ (and thus covers $\bigcup \Gamma_e$). We set $\Gamma' := \bigcup_{e \in E_\Gamma} \{(\pi_i^{\infty})^{-1}([p',q'])\}_{[p',q'] \in \Gamma_e}$. By Lemma \ref{lem:doubling}, $\mu_i(B_{5r}(p)) \leq \mu_i(B_{8r}(p)) \leq 4^3\mu_i(B_r(p))$. We set $\Gamma' = \bigcup_{e \in E} \Gamma_e'$, $C = 4\cdot4^3$, and $\gamma'_C = B_{5r}(p_{\gamma'})$.
\end{proof}


\section{Proof of Weak Form of RNP Differentiability, Theorem \ref{thm:weakRNPdiff}}
\label{sec:weakRNPdiff}
For each deep point $x \in D \sbs X_\infty$ (a full measure set), recall the natural scale $r_i(x) = |e_i(x)|$, where $e_i(x)$ is the unique edge of $X_i$ containing $\pi^\infty_i(x)$. Let $\pi := \pi^\infty_0$.

\begin{theorem} \label{thm:weakRNPdiff}
For every RNP space $B$ and Lipschitz map $f: X_\infty \to B$, for $\mu_\infty$-almost every $x \in X_\infty$, $f$ is differentiable at $x$ with respect to $\pi$ along the sequence of scales $(r_i(x))_{i=0}^\infty$. More specifically, for almost every $x \in D$ and any $R \geq 1$,
$$\limsup_{i \to \infty} \sup_{y \in B_{Rr_i(x)}(x)} \frac{\|f(y)-f(x)-f'(x)(\pi(y)-\pi(x))\|}{r_{i}(x)} = 0$$
where $f'$ is the derivative of $f$ from Theorem \ref{thm:derivdef}.
\end{theorem}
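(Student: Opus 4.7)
The plan is to reduce the differentiation statement to a Lebesgue-differentiation-type result for the maximal operator $M_i$ of Section \ref{sec:maximalineq}, via the fundamental theorem of calculus (Theorem \ref{thm:ftc}). Set $\tilde f(z) := f(z) - f(x) - f'(x)(\pi(z) - \pi(x))$; since the linear term factors through $X_0$, Theorem \ref{thm:derivdef} gives $\tilde f' = f' - f'(x)$ $\mu_\infty$-a.e., and $\tilde f(x) = 0$. I would then decompose
\begin{equation*}
\tilde f(y) - \tilde f(x) = [\tilde f(y) - \tilde f_i(y_i)] - [\tilde f(x) - \tilde f_i(x_i)] + [\tilde f_i(y_i) - \tilde f_i(x_i)],
\end{equation*}
where $\tilde f_i := \E^\infty_i \tilde f$, and note that the last bracket equals $f_i(y_i) - f_i(x_i) - f'(x)(\pi(y) - \pi(x))$.

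For this last bracket, Lemma \ref{lem:ballcontainedinedge} ensures that, for $i$ large and every $y \in B_{Rr_i(x)}(x)$, $y_i \in (\pi^i_{i-1})^{-1}(e_{i-1}(x))$, so Theorem \ref{thm:ftc} applies to $\tilde f$ at level $i$; combined with $|y_i - x_i| \leq LRr_i(x)$ from \eqref{eq:piLipbnd} and \eqref{eq:quasigeo}, this yields
\begin{equation*}
\frac{\|\tilde f_i(y_i) - \tilde f_i(x_i)\|}{r_i(x)} \leq 2LR \cdot [M_i(\E^\infty_i(\|f' - f'(x)\|))](x_i).
\end{equation*}
The crux is then the Lebesgue-differentiation-type statement $\lim_i [M_i(\E^\infty_i(\|f' - f'(x)\|))](x_i) = 0$ for $\mu_\infty$-a.e.\ $x$. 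I would fix a countable dense subset $\{c_n\}$ of the (essentially separable) range of $f'$ and combine (i) a direct argument for each fixed $c_n$, using $L^p$-approximation of $\|f' - c_n\|$ by functions of the form $\phi \circ \pi^\infty_{i_0}$ with $\phi$ Lipschitz on $X_{i_0}$, together with the fact that the paths $[x_i, y_i]$ project to shrinking subsets of $X_{i_0}$, to conclude $\lim_i [M_i(\E^\infty_i(\|f' - c_n\|))](x_i) = \|f'(x) - c_n\|$ a.e.; (ii) the triangle inequality $\|f' - f'(x)\| \leq \|f' - c_n\| + \|c_n - f'(x)\|$, letting $c_n \to f'(x)$ along the dense subset; (iii) the maximal inequality Theorem \ref{thm:maximalineq} to upgrade $L^p$-smallness of approximation errors into smallness of maximal averages.

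Finally, the approximation errors $\tilde f(z) - \tilde f_i(z_i)$ are bounded by Theorem \ref{thm:condexpmeas} and \eqref{eq:fiberdiam} as $\|\tilde f(z) - \tilde f_i(z_i)\| \leq 2 \Lip{\tilde f} \delta_i' |e_i(z_i)|$. For $z = x$, $|e_i(x)| = r_i(x)$ makes this $O(\delta_i' r_i(x)) = o(r_i(x))$. The principal technical obstacle is $z = y$: the quantity $|e_i(y_i)|$ need not be comparable to $r_i(x)$, since $y_i$ may sit in a subedge of $e_{i-1}(x)$ much larger than $e_i(x)$. I expect to resolve this by choosing an auxiliary level $j = j(i) \to \infty$ rapidly enough that the global bound $\max_{e \in E(X_j)} |e| \leq \prod_{k=0}^{j-1}\delta_k'$ falls below $r_i(x)/i$ (ensuring $|e_j(y_j)|/r_i(x) \to 0$ uniformly in $y$), replacing $\tilde f_i$ with $\tilde f_j$ in the two approximation brackets, and extending the FTC estimate to the path $[x_j, y_j]$ (which may cross several super-edges) by decomposing it into subpaths each contained in a single $(\pi^j_{j-1})^{-1}(e)$ and summing the corresponding contributions. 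The Lebesgue-differentiation lemma above remains the main technical hurdle.
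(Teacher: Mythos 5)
Your proposal takes a genuinely different route from the paper. The paper introduces auxiliary finite-graph approximants $f_n := \E^\infty_n(f) \comp \pi^\infty_n$ and splits via the triangle inequality into three terms: $(f-f_n)(y)-(f-f_n)(x)$, the classical pointwise differentiation error for $f_n$, and $(f_n'(x)-f'(x))(\pi(y)-\pi(x))$. The middle term vanishes (for each fixed $n$, for a.e.\ $x$) because $f_n$ factors through the finite graph $X_n$; the third vanishes by martingale convergence $f_n'\to f'$ a.e.; and the first is handled by FTC and the maximal operator applied to $\|k_n'\|$ where $k_n=f-f_n$. Since $\|k_n'\|\to 0$ a.e.\ boundedly, a DCT-plus-maximal-inequality argument closes things. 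The double-limit structure ($i\to\infty$ then $n\to\infty$) is precisely a device to avoid ever proving that the maximal averages of one fixed function converge pointwise; one only needs $\limsup_n M(\|k_n'\|)=0$ a.e., which is a much softer $L^p$-to-$L^{1,w}$ statement. Your route subtracts the linear approximation at the start ($\tilde f = f - f(x) - f'(x)\comp\pi$, $\tilde f'=f'-f'(x)$) and reduces instead to the single-limit Lebesgue-point assertion $\lim_i M_i\bigl(\E^\infty_i\|f'-f'(x)\|\bigr)(x_i)=0$ for a.e.\ $x$, for which you sketch a density-and-approximation argument. That lemma is plausible but is a genuine additional piece of machinery not present in, and deliberately sidestepped by, the paper; you correctly identify it as the main unresolved hurdle.

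There is also a concrete gap around the comparability of $|e_i(y_i)|$ and $r_i(x)$. You are right to flag it: the fiber-diameter bound for $y$ gives $d(y,y_i)\le 2\delta_i'|e_i(y_i)|$, and the axioms do not force $|e_i(y_i)|\lesssim r_i(x)$, since subedges of $e_{i-1}(x)$ in $X_{i-1}'$ may have very different lengths. (The paper's own proof asserts $d(y,y_i)/r_i(x)\le 2\delta_i'$ without justifying this comparability, so the issue is not specific to your proposal.) However, your proposed fix — passing to a much deeper level $j(i)$ so that \emph{all} edges of $X_j$ have length $\le r_i(x)/i$ — creates a new problem you name but do not resolve: the path $[x_j,y_j]$ then typically crosses many super-edges $(\pi^j_{j-1})^{-1}(e)$, and the maximal operator $M$ of Section~\ref{sec:maximalineq} (and the covering lemma behind it) is built from averages over paths lying inside a \emph{single} such super-edge. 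After decomposing $[x_j,y_j]$ into subpaths and summing the FTC contributions, the resulting quantity is no longer dominated by $M(\cdot)(x)$ as defined, so Theorem~\ref{thm:maximalineq} does not apply directly. You would need either a modified maximal operator and covering lemma adapted to these longer paths, or a different route around the $|e_i(y_i)|$ issue, before the argument is complete.
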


\begin{proof}
Let $B$ be an RNP space, $f: X_\infty \to B$ Lipschitz, and $R \geq 1$. The conclusion of the theorem is clearly invariant under postcomposition of $f$ with a translation, so we may assume $f \in \Lipo{X_\infty;B}$. For each $n \geq 0$, let $f_n := \E^{\infty}_{n}(f) \comp \pi^\infty_n \in \Lipo{X_\infty;B}$ (see Section \ref{ss:condexp} for relevant definitions). Let
$$(*) := \limsup_{i \to \infty} \sup_{y \in B_{Rr_i(x)}(x)} \frac{\|f(y)-f(x)-f'(x)(\pi(y)-\pi(x))\|}{r_{i}(x)}$$
(so $(*)$ is a function of $x$). For every $x$, the triangle inequality implies
$$(*) \leq \limsup_{n \to \infty} \limsup_{i \to \infty} \sup_{y \in B_{Rr_i(x)}(x)} \frac{\|f(y)-f(x)-f_n(y)+f_n(x)\|}{r_{i}(x)}$$
$$+ \frac{\|f_n(y)-f_n(x)-f_n'(x)(\pi(y)-\pi(x))\|}{r_{i}(x)}$$
$$+ \frac{\|(f_n'(x)-f'(x))(\pi(y)-\pi(x))\|}{r_{i}(x)}$$
For almost every $x$ and every fixed $n$, the second term equals 0 by \eqref{eq:derivdef}, and so
$$(*) \leq \limsup_{n \to \infty} \limsup_{i \to \infty} \sup_{y \in B_{Rr_i(x)}(x)} \frac{\|f(y)-f(x)-f_n(y)+f_n(x)\|}{r_{i}(x)}$$
$$+ \frac{\|(f_n'(x)-f'(x))(\pi(y)-\pi(x))\|}{r_{i}(x)}$$
$$\leq \limsup_{n \to \infty} \limsup_{i \to \infty} \sup_{y \in B_{Rr_i(x)}(x)} \frac{\|f(y)-f(x)-f_n(y)+f_n(x)\|}{r_{i}(x)} + LR\|f_n'(x)-f'(x)\|$$
By Theorem \ref{thm:derivdef}, the second term here also equals 0 for almost every $x$, and so
$$(*) \leq \limsup_{n \to \infty} \limsup_{i \to \infty} \sup_{y \in B_{Rr_i(x)}(x)} \frac{\|f(y)-f(x)-f_n(y)+f_n(x)\|}{r_{i}(x)}$$

Let $k_n := f-f_n$, so that $\sup_n \|k_n'\|_{L^\infty(\mu_\infty;B)} \leq \sup_n \|k_n\|_{\text{Lip}_0(X_\infty;B)} \leq 2L^2\|f\|_{\text{Lip}_0(X_\infty;B)}$ and $\|k_n'\| \overset{n \to \infty}{\to} 0$ $\mu_\infty$-almost everywhere (again by Theorem \ref{thm:derivdef}. This means $k_n'$ \emph{boundedly converges} to 0, and we will apply the DCT theorem later). It suffices to prove
$$\limsup_{n \to \infty} \limsup_{i \to \infty} \sup_{y \in B_{Rr_i(x)}(x)} \frac{\|k_n(y)-k_n(x)\|}{r_{i}(x)} = 0$$
Define $y_i := \pi^\infty_{i}(y)$ and $x_i := \pi^\infty_{i}(x)$. then by Theorem \ref{thm:condexpmeas},
$$[\E^\infty_i(k_n)](y_i) - [\E^\infty_i(k_n)](x_i) = \int_{(\pi^\infty_{i})^{-1}(y_i)} k_n d\mu_\infty^{y_i} - \int_{(\pi^\infty_{i})^{-1}(x_i)} k_n d\mu_\infty^{x_i}$$
Furthermore, for any $y \in (\pi^\infty_{i})^{-1}(y_i)$ and $x \in (\pi^\infty_{i})^{-1}(x_i)$, we have $\frac{d(y,y_i)}{r_i(x)},\frac{d(y,y_i)}{r_i(x)} \leq 2\delta'_{i}$ by \eqref{eq:fiberdiam}, which, together with the previous equation (and triangle inequality) gives us
$$\frac{\|k_n(y)-k_n(x)\|}{r_{i}(x)} \leq \frac{1}{r_i(x)}\int_{(\pi^\infty_{i})^{-1}(y_i)} \|k_n-k_n(y)\| d\mu_\infty^{y_i}$$
$$+ \frac{1}{r_i(x)}\left\|\int_{(\pi^\infty_{i})^{-1}(y_i)} k_n d\mu_\infty^{y_i} - \int_{(\pi^\infty_{i})^{-1}(x_i)} k_n d\mu_\infty^{x_i}\right\|$$
$$+ \frac{1}{r_i(x)}\int_{(\pi^\infty_{i})^{-1}(x_i)} \|k_n-k_n(x)\| d\mu_\infty^{x_i}$$
$$\leq \frac{\left\|[\E^\infty_i(k_n)](y_i) - [\E^\infty_i(k_n)](x_i)\right\|}{r_i(x)} + 4\|k_n\|_{\text{Lip}_0(X_\infty;B)}\delta'_{i}$$
$$\leq \frac{\left\|[\E^\infty_i(k_n)](y_i) - [\E^\infty_i(k_n)](x_i)\right\|}{r_i(x)} + 4(2L^2)\|f\|_{\text{Lip}_0(X_\infty;B)}\delta'_{i}$$
so it suffices to prove that
$$\limsup_{n \to \infty} \limsup_{i \to \infty} \sup_{y_i \in B^i_{2Rr_i(x)}(x_i)} \frac{\left\|[\E^\infty_i(k_n)](y_i) - [\E^\infty_i(k_n)](x_i)\right\|}{r_{i}(x)} = 0$$
for almost every $x$, where $B^i$ indicates a ball in the space $(X_i,d)$ (since $2Rr_i(x) \geq Rr_i(x) + \delta'_ir_i(x)$).

By Lemma \ref{lem:ballcontainedinedge}, for almost every $x$, if $i$ is sufficiently large (depending on $R$ and $x$), then $B^i_{2Rr_i(x)}(x_i)$ is completely contained in $(\pi^{i}_{i-1})^{-1}(e_{i-1}(x))$. Thus, by Theorem \ref{thm:ftc}, for such $i$ and any $y_i \in B^i_{2Rr_i(x)}(x_i)$,
$$\frac{\left\|[\E^\infty_i(k_n)](y_i) - [\E^\infty_i(k_n)](x_i)\right\|}{r_{i}(x)} \leq 2\frac{|y_i-x_i|}{r_i(x)}\fint_{[x_i,y_i]}\mathbb{E}^{\infty}_{i}(\|k_n'\|)d\mu_{i} =: (**)$$
By \eqref{eq:quasigeo}, since $y_i \in B^i_{2Rr_i(x)}(x_i) \sbs (\pi^{i}_{i-1})^{-1}(e_{i-1}(x))$, $|y_i-x_i| \leq 2Rr_i(x)$, and so
$$(**) \leq 4R\fint_{[x_i,y_i]}\mathbb{E}^{\infty}_{i}(\|k_n'\|)d\mu_{i} =: (***)$$
by the definition of $M$, the maximal operator defined by \eqref{eq:maximaldef}, we get 
$$(***) \leq 4R[M(\|k_n'\|)](x)$$
Then it suffices to show that
$$\limsup_{n \to \infty} [M(\|k_n'\|)](x) = 0$$
for almost every $x \in X_\infty$.

For this, it suffices to show that
$$\left\|\limsup_{n \to \infty} M(\|k_n'\|)\right\|_{L^{1,w}(\mu_\infty)} = 0$$
We have, by the DCT and Theorem \ref{thm:maximalineq},
$$\left\|\limsup_{n \to \infty} M(\|k_n'\|)\right\|_{L^{1,w}(\mu_\infty)} \overset{\text{DCT}}{=} \limsup_{n \to \infty} \left\|M(\|k_n'\|)\right\|_{L^{1,w}(\mu_\infty)}$$
$$\overset{\text{Theorem} \: \ref{thm:maximalineq}}{\leq} \limsup_{n \to \infty} 2C\left\|k_n'\right\|_{L^2(\mu_\infty;B)} \overset{\text{DCT}}{=} 0$$
\end{proof}


\section{Application to Non-BiLipschitz Embeddability}
\label{sec:application}
In this section we apply Theorem \ref{thm:mainthmssummary} to prove a new negative biLipschitz embeddability result, Corollary \ref{cor:Carnotapplication}.

\begin{proof}[Proof of Corollary \ref{cor:Carnotapplication}]
We'll proceed by contradiction. Let $G$ be a Carnot group, $B$ an RNP space, $M$ a metric space containing a thick family of geodesics, and $f = (f_1,f_2): M \to G \cross B$ a biLipschitz map. We may assume $M$ is complete. Then we let $X_\infty \sbs M$, $S_\infty \sbs X_\infty$, and $\mu_\infty$ be as in Theorem \ref{thm:mainthmssummary}, and from here on consider $f$ to be restricted to $X_\infty$.

Let $\psi: G: \R^k$ be the abelianization map. $\psi$ satisfies a well known unique lifting property: given any Lipschitz map $\gamma: \R \to \R^k$, there exists a unique Lipschitz lift $\tilde{\gamma}: \R \to G$, meaning $\psi \comp \tilde{\gamma} = \gamma$. Precomposing with $f$ gives a Lipschitz map $(\psi,id_B) \comp (f_1,f_2) = (\psi \comp f_1,f_2): X_\infty \to \R^k \oplus B$ into an RNP space.

By Theorem \ref{thm:mainthmssummary}, $(\psi \comp f_1,f_2)$ satisfies the weak form of differentiability $\mu_\infty$-almost everywhere. Pick a point $x \in S_\infty$ of differentiability (which exists since $\mu_\infty(S_\infty) > 0$) and an ultrafilter $\U(x)$ given to us by Theorem \ref{thm:mainthmssummary}. This means the blowup $(\psi \comp f_1,f_2)_x: T_x^{r_i(x),\U(x)}X_\infty \to \R^k \cross B$ exists and factors as $(\psi \comp f_1,f_2)_x = ((\psi \comp f_1)'(x),f_2'(x)) \comp \pi_x$, where $\pi_x: T_x^{r_i(x),\U(x)}X_\infty \to \R$ is the blowup of $\pi$ and $(\psi \comp f_1)'(x): \R \to \R^k$ and $f_2'(x): \R \to B$ are linear. Breaking these into components gives us the two factorizations
$$(\psi \comp f_1)_x = (\psi \comp f_1)'(x) \comp \pi_x$$
\begin{equation} \label{eq:blowupfactorization2}
(f_2)_x = f_2'(x) \comp \pi_x
\end{equation}

Let us consider the blowup map $(\psi \comp f_1)_x$. It turns out that both blowups $(f_1)_x: T_x^{r_i(x),\U(x)}X_\infty \to T_{f_1(x)}^{r_i(x),\U(x)}G = G$ and $\psi = \psi_{f_1(x)}: G = T_{f_1(x)}^{r_i(x),\U(x)}G \to \R^k$ exist and thus $(\psi \comp f_1)_x = \psi \comp (f_1)_x$. $(f_1)_x$ exists because the target space $G$ is proper, and $T_{f_1(x)}^{r_i(x),\U(x)}G = G$ because $G$ is proper and self-similar. Similar reasoning implies $\psi_{f_1(x)}: G \to \R^k$ exists and $\psi_{f_1(x)} = \psi$. Thus our first factorization can be re-expressed as
\begin{equation} \label{eq:blowupfactorization1}
\psi \comp (f_1)_x = (\psi \comp f_1)'(x) \comp \pi_x
\end{equation}
By Remark \ref{rmk:tangentconegeodesics}, there are two geodesics $\gamma,\gamma': \R \to T_x^{r_i(x),\U(x)}X_\infty$ whose combined image forms a circle of height $\alpha$, that coincide with each other outside that circle, and satisfy $\pi_x \comp \gamma = \pi_x \comp \gamma' = id_\R$. Using these equations, \eqref{eq:blowupfactorization1}, and \eqref{eq:blowupfactorization2} yields
$$\psi \comp (f_1)_x \comp \gamma = (\psi \comp f_1)'(x) = \psi \comp (f_1)_x \comp \gamma'$$
$$(f_2)_x \comp \gamma = f_2'(x) = (f_2)_x \comp \gamma'$$
Since $f_1,\gamma,\gamma'$ are Lipschitz, the unique lifting property of $\psi$ implies
$$(f_1)_x \comp \gamma = (f_1)_x \comp \gamma'$$
Combining these yields
$$(f_1,f_2)_x \comp \gamma = (f_1,f_2)_x \comp \gamma'$$

Since $(f_1,f_2)$ is biLipschitz, so is $(f_1,f_2)_x$. Thus, $\gamma = \gamma'$. This is a contradiction since the combined image of two equal geodesics would be a line and could not contain (even topologically) a circle.
\end{proof}


\section{Inverse Limit of Graphs in nonRNP Spaces}
\label{sec:invlimgraphsnonRNP}
In this section we modify the thick family of geodesics construction in \cite{Os14b} to obtain an embedding of an inverse limit of an admissible system of graphs into any nonRNP Banach space. To do so, we use the following characterization of nonRNP spaces (see Theorem 2.7 of \cite{Pi16}): for any nonRNP space $B$, there exist a $\delta > 0$ and an open, convex subset $C$ of the unit ball of $B$ such that for every $c \in C$, $c \in \text{co}(C \setminus B_{4\delta}(c))$.

\subsection{Generalized Diamond Systems}
\begin{definition}
\label{def:gendiamsys}
A \textbf{generalized diamond system} is an inverse system of connected metric graphs, $\dots  \overset{\pi^3_2}{\to} X_2 \overset{\pi^2_1}{\to} X_1 \overset{\pi^1_0}{\to} X_0$ satisfying:

\begin{enumerate}[label=(D\arabic*)]
\item \label{ax:gendiamsys1} $X_0$ has two vertices and one edge of length 1. We identify $X_0$ with $I := [0,1]$.
\item \label{ax:gendiamsys2} For any vertex $v \in V(X_i)$, $(\pi^{i+1}_i)^{-1}(\{v\})$ consists of a single vertex of $X_{i+1}$. We identify this vertex with $v$ and consider $V(X_i)$ as a subset of $V(X_{i+1})$.
\item \label{ax:gendiamsys3} There exist an $m_i$ and a subdivision $X_i'$ of $X_i$ so that:
	\begin{enumerate}[label=(\roman*)]
	\item For vertex $v \in V(X_i')$, $(\pi^{i+1}_i)^{-1}(\{v\})$ consists of one or two vertices of $X_{i+1}$. If $u,v$ are adjacent vertices in $X_i'$, then at most one of $(\pi^{i+1}_i)^{-1}(\{u\})$, $(\pi^{i+1}_i)^{-1}(\{v\})$ consists of two vertices.
	\item Each edge $e \in E(X_i)$ is subdivided into $2^{m_i}$ edges of $X_i'$ of equal length.
	\item $\pi^{i+1}_i: X_{i+1} \to X_i'$ is open, simplicial, and an isometry on every edge.
	\item  For any edge $e' \in E(X_i')$, $(\pi^{i+1}_i)^{-1}(e')$ consists of one or two edges, and if $e'$ is a terminal subedge of $e$ (meaning it shares a vertex with $e$), then $(\pi^{i+1}_i)^{-1}(e')$ consists of only one edge.
	\end{enumerate}
\end{enumerate}

A generalized diamond system admits a canonical sequence of Borel probability measures $(\mu_i)_{i=0}^\infty$ satisfying

\begin{enumerate}[resume*]
\item $\mu_0$ is Lebesgue measure on $I$.
\item Restricted to each edge of $X_i$, $\mu_i$ is a constant multiple of length measure.
\item For each $e' \in E(X_i')$, if $(\pi^{i+1}_i)^{-1}(e')$ consists of two edges, then the $\mu_{i+1}$ measure of each of these edges equals $\frac{1}{2}\mu_i(e')$, and if $(\pi^{i+1}_i)^{-1}(e')$ consists of one edge, then the $\mu_{i+1}$ measure of this edge equals $\mu_i(e')$.
\end{enumerate}

\begin{remark}
With a small adjustment, these axioms imply the axioms of an ``admissible" inverse system from \cite{CK15}. The only problem is that in \cite{CK15}, each edge of $X_i$ is subdivided into $m$ edges of $X_i'$, where $m$ is independent of $i$, and our subdivisions are into $2^{m_i}$ subedges, where $m_i$ can depend on $i$. To conform to the \cite{CK15} axiom, we can augment our inverse system by inserting extra graphs $X_i^j$ between $X_i$ and $X_{i+1}$ that are simply subdivisions of $X_i$ into $2^{j}$ subedges, for $1 \leq j \leq m_i$. The maps between them are identity maps. This new system will now be an admissible inverse system with subdivision parameter 2, and the inverse limit of the original system and augmented system will be the same. Thus, by Theorem 1.1 of \cite{CK15}, the inverse limit $(X_\infty,d_\infty,\mu_\infty)$ of a generalized diamond system is a PI space.
\end{remark}

There is one last axiom for a generalized diamond system which implies (10.3) from \cite{CK15} holds $\mu_\infty$-almost everywhere.

\begin{enumerate}[resume*]
\item \label{ax:gendiamsys7} For any edge $e \in E(X_i)$, every point in $(\pi_i^{i+1})^{-1}(e_{1/2})$ is at most 2 edge lengths (of $X_{i+1}$) away from a vertex of degree 4, where $e_{1/2}$ denotes the middle half of $e$.
\end{enumerate}

\end{definition}

\subsection{Proof of Theorem \ref{thm:gendiamembed}}
\begin{proof}[Proof of Theorem \ref{thm:gendiamembed}]
We begin by making some reductions. First, notice that it suffices to embed into $B \oplus_\infty \R$ for any nonRNP space $B$. This is because we may pick any closed, codimension-1 subspace $B' \sbs B$, which is also necessarily a nonRNP space, and get $B \cong B' \oplus_\infty \R$.

Let $B$ be a nonRNP space (in a slight abuse of notation, we'll use $\| \cdot \|$ to stand for both the norm on $B$ and the norm on $B \oplus_\infty \R$, but this shouldn't cause any confusion). We'll construct a sequence of subsets $(X_i)_{i=0}^
\infty$ of $B \oplus_\infty \R$ and maps $\pi^{i+1}_i: X_{i+1} \to X_i$ such that $(X_i,d_i)$ is a connected metric graph and $\dots  \overset{\pi^3_2}{\to} X_2 \overset{\pi^2_1}{\to} X_1 \overset{\pi^1_0}{\to} X_0$ is a generalized diamond system, where $d_i$ denotes the intrinsic metric on $X_i$ (shortest path metric, where path length is measured with respect to ambient Banach space). The construction will be such that there exist a $\delta > 0$ and $\delta_i > \delta$ such that $X_i$ is $\delta_i^{-1}$-quasiconvex in $B \oplus \R$, meaning $\delta_i d_i(x,y) \leq \|x-y\| \leq d_i(x,y)$ for all $x,y \in X_i$. Furthermore, the construction will be such that for any $v \in V(X_i) \sbs V(X_{i+1})$, $\pi^{i+1}_i(v) = v$ (see Axiom \ref{ax:gendiamsys2} for the identification of $V(X_i)$ as a subset of $V(X_{i+1})$). By density of the the vertices in the inverse limit space, this implies that the closure of $\cup_i V(X_i)$ in $B \oplus \R$ is $\delta^{-1}$-biLipschitz equivalent to the inverse limit of $\dots  \overset{\pi^3_2}{\to} X_2 \overset{\pi^2_1}{\to} X_1 \overset{\pi^1_0}{\to} X_0$.

Previously, we introduced geodesics as isometric maps on intervals, but in this proof it will be more convenient to consider the image of these maps instead of the map itself. For this reason, we use the term \emph{geodesic path} to mean the image of a geodesic map. Additionally, if $p$ and $q$ are points in a graph, we previously used the notation $|p-q|$ to denote the distance between $p$ and $q$ with respect to the length metric, but such notation will cause problems in this proof since we are working in a normed space. Instead, we will use the term \emph{intrinsic metric} which has the same meaning as length metric, and notation for this distance will be set subsequently.

\subsubsection{Model Graph}
\label{sss:modelgraph}
Let $\delta > 0$ and let $C$ be an open, convex subset of the unit ball of $B$ such that $0 \in C$ and $c \in \text{co}(C \setminus B_{4\delta}(c))$ for every $c \in C$, where $B_r(x)$ is the closed unit ball of radius $r$ centered at $x$. We describe how to construct a graph, for each $c \in C$, that will serve as a building block for the graphs $X_i$.

Let $c \in C$. We'll form two piecewise affine, geodesic paths from $(0,0)$ to $(c,1)$, denoted $\gamma_{0}(c)$ and $\gamma_{1}(c)$. The reader should refer to Figure \ref{fig:modelgraph} for a helpful visual of the construction. Since $c \in C$, $c = \alpha_1 c_1 + \dots \alpha_k c_k$ for some $\alpha_j \in (0,1)$ and $c_1, \dots c_k \in C$ with $\alpha_1 + \dots \alpha_k = 1$ and $\|c - c_j\| \geq 4\delta_c > 4\delta$ (note that since $c,c_j$ belong to the unit ball of $B$, $\delta_c \leq \frac{1}{2}$). Since $C$ is open, we may assume each $\alpha_j$ is a dyadic rational with common denominator $2^n$, by density of dyadic rationals in $[0,1]$. Additionally, by ``splitting" up terms of the form $\frac{m}{2^n}c_j$ into the $m$-fold sum $\frac{1}{2^n} c_j + \frac{1}{2^n} c_j + \dots \frac{1}{2^n} c_j$, we may assume $\alpha_j = 2^{-n_c}$ and $k = 2^{n_c}$ for some $n_c \geq 1$, independent of $j$ (of course we do not have that $\{c_j\}$ are distinct, but that is no issue). $\gamma_{0}(c)$ consists of a piecewise affine interpolation between $2 \cdot 2^{n_c} + 1$ vertices, $v_0, v_1', v_1, v_2', v_2, \dots v_{2^{n_c}}', v_{2^{n_c}}$. These vertices are such that $v_0 = (0,0)$, and for each $j$, $v_j' - v_{j-1} = 2^{-(n_c+1)}(c,1)$ and $v_j - v_j' = 2^{-(n_c+1)}(c_j,1)$. Likewise, $\gamma_{1}(c)$ consists of a piecewise affine interpolation between $2 \cdot 2^{n_c} + 1$ vertices, $w_0, w_1', w_1, w_2', w_2, \dots w_{2^{n_c}}', w_{2^{n_c}}$. These vertices are such that $w_0 = (0,0)$, and for each $j$, $w_j' - w_{j-1} = 2^{-(n_c+1)}(c_j,1)$ and $w_j - w_j' = 2^{-(n_c+1)}(c,1)$ (notice the flipping of primed and unprimed terms). It follows that $v_j = w_j$ for each $j$, and that $v_{2^{n_c}} = (c,1) = w_{2^{n_c}}$. These are indeed geodesic paths because the vectors $c, c_j$ all have norm 1 in $B$, and we take an $\infty$-norm direct sum. An isometry from these geodesics paths onto the interval $[0,1]$ is provided by projection onto the second coordinate.

$\gamma_0(c)$ is equipped with a graph structure. The vertex set is the ordered set $(v_0, v_1', v_1, v_2', v_2, \dots v_{2^{n_c}}', v_{2^{n_c}})$ and there is one edge between consecutive vertices consisting of the line segment between them. $\gamma_1(c)$ is similarly equipped with a graph structure. We let $\Gamma(c) = \gamma_0(c) \cup \gamma_1(c)$. Since $\gamma_0(c)$ and $\gamma_1(c)$ intersect only on their vertices, $\Gamma(c)$ inherits an induced graph structure. The vertex set is $\{v_0=w_0=(0,0),v_1',w_1',v_1=w_1, \dots v_{2^{n_c}}',w_{2^{n_c}}',v_{2^{n_c}}=w_{2^{n_c}}=(c,1)\}$. See Figure \ref{fig:modelgraph} for an example of $\Gamma(c)$ for $2^{n_c} = 4$. Loosely, $\Gamma(c)$ is made up of a sequence of parallelograms increasing in the ``$\R$ direction" of $B \oplus \R$ such that adjacent parallelograms share a common vertex. Because of this, for any two points of $x,y \in \Gamma(c)$ belonging to distinct parallelograms, the extrinsic distance $\|x-y\|$ and intrinsic distance $d_{in}(x,y)$ agree. We claim that each of these parallelograms is $\delta_c^{-1}$-quasiconvex. Then this claim together with the preceding sentence imply that $\Gamma(c)$ is $\delta_c^{-1}$-quasiconvex.

\emph{Proof of Claim.} Consider one of the parallelograms of $\Gamma(c)$. It has vertices $v_{j-1}=w_{j-1},v_j',w_j',v_j=w_j$ for some $j$. First notice that translations and dilations don't change the quasiconvexity constant of parallelograms, so we may perform such modifications to ours to obtain one that is easier to calculate with. Translate the parallelogram by $-v_{j-1}$ ($= -w_{j-1}$) so that one of the vertices is $(0,0)$, and the other vertices are $2^{-(n_c+1)}(c,1)$, $2^{-(n_c+1)}(c_j,1)$, and $2^{-(n_c+1)}(c+c_j,2)$. Then scale by $2^{n_c+1}$ so that the vertices are $(0,0)$, $(c,1)$, $(c_j,1)$, and $(c+c_j,2)$. Now we label the edges: let $e_1$ be the edge between $(0,0)$ and $(c,1)$, $e_2$ the edge between $(0,0)$ and $(c_j,1)$, $e_3$ the edge between $(c,1)$ and $(c+c_j,2)$, and $e_4$ be the edge between $(c_j,1)$ and $(c+c_j,2)$. Figure \ref{fig:parallelogram} shows an example of this parallelogram, and it will be helpful to keep this picture in mind while reading the remaining proof of the claim.

\begin{figure}
\includegraphics[scale=.5]{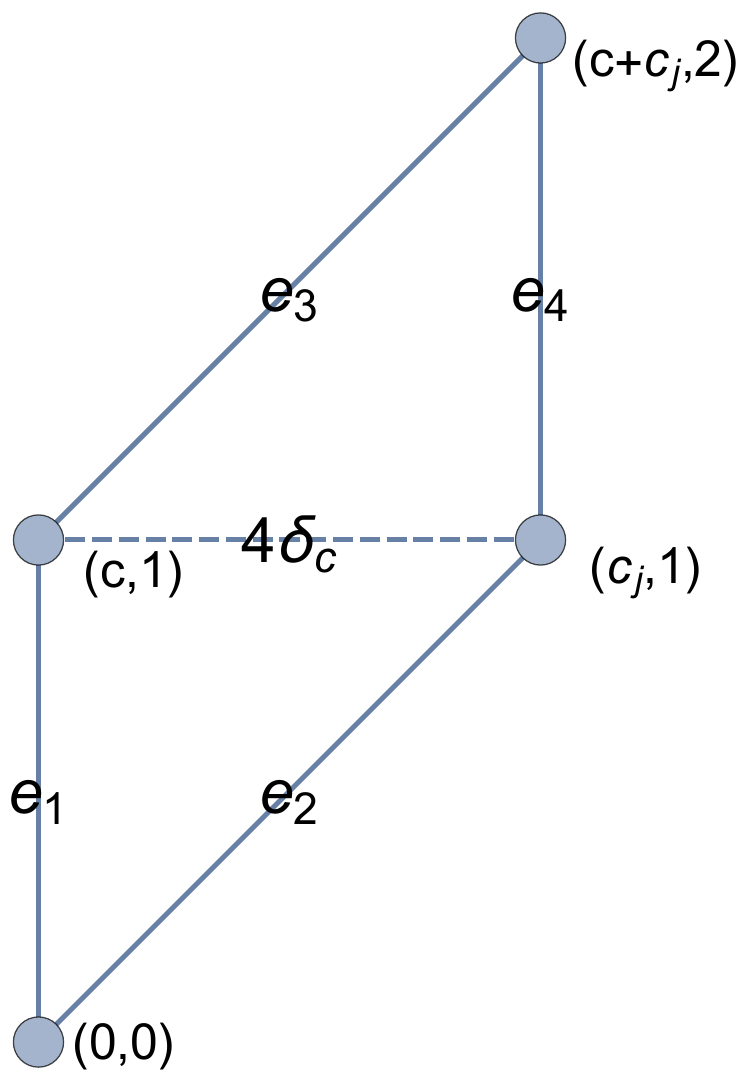}
\caption{The parallelogram with vertices $(0,0)$, $(c,1)$, $(c_j,1)$, and $(c+c_j,2)$. The horizontal axis is in the ``$B$ direction" of $B \oplus \R$, and the vertical axis is in the ``$\R$ direction". The extrinsic and intrinsic distance between any two points on $e_1 \cup e_3$ or any two points on $e_2 \cup c_4$ agree. The extrinsic distance between the two vertices $(c,1)$, $(c_j,1)$ is $4\delta_c$. All edge lengths are 1.}
\label{fig:parallelogram}
\end{figure}

Note that $e_1 \cup e_3$ is a subpath of the geodesic path corresponding to $\gamma_0(c)$, and $e_2 \cup e_4$ is a subpath of the geodesic path corresponding to $\gamma_1(c)$, so the intrinsic and extrinsic distance agree on these subsets. Let $x$ and $y$ be elements of the parallelogram. As just mentioned, if $x$ and $y$ belong to $e_1 \cup e_3$, or both belong to $e_2 \cup e_3$, then the intrinsic and extrinsic distance between $x$ and $y$ agree. Suppose then that $x$ belongs to $e_1$ and $y$ belongs to $e_2$. Then $x = \alpha(c,1)$ for some $\alpha \in [0,1]$, $y = \beta(c_j,1)$ for some $\beta \in [0,1]$, and the intrinsic distance between $x$ and $y$ is $\alpha + \beta$. Without loss of generality, assume $\beta \geq \alpha$, so that the intrinsic distance between $x$ and $y$, $d_{in}(x,y)$ is bounded by $2\beta$. Then the extrinsic distance between $x$ and $y$ is
$$\|x-y\| = \|\alpha(c,1)-\beta(c_j,1)\| = \|(\beta(c-c_j) + (\alpha-\beta)c,\alpha-\beta)\|$$
$$= \max(\|\beta(c-c_j) + (\alpha-\beta)c\|, |\alpha-\beta|)$$
$$\geq \max(\|\beta(c-c_j)\| - \|(\alpha-\beta)c\|, |\alpha-\beta|)$$
$$\geq \max(\|\beta(c-c_j)\| - |\alpha-\beta|, |\alpha-\beta|)$$
$$ \geq \max(\beta4\delta_c - |\alpha-\beta|, |\alpha-\beta|)$$
$$\geq (2\beta)\delta_c \geq \delta_c d_{in}(x,y)$$

\noindent showing that the quasiconvexity constant is bounded above by $\delta_c^{-1}$ in this case. By symmetry, we get the same upper bound if $x$ belongs to $e_3$ and $y$ belongs to $e_4$. There is one remaining case (since the rest of the cases follow from this one by symmetry), in which $x$ belongs to $e_1$ and $y$ belongs to $e_4$. In this case, $x = \alpha(c,1)$ for some $\alpha \in [0,1]$, $y = (c_j,1) + \beta (c,1)$ for some $\beta \in [0,1]$, and we use the trivial bound $d_{in}(x,y) \leq 2$ for the intrinsic distance. Then for the extrinsic distance, we have
$$\|x-y\| = \left\|\alpha(c,1) - \left((c_j,1) + \beta (c,1)\right)\right\|$$
$$= \left\|\left(\left(\alpha-\beta-1\right)c+(c-c_j),\alpha-\beta-1\right)\right\|$$
$$= \max\left(\left\|\left(\alpha-\beta-1\right)c+(c-c_j)\right\|,\left|\alpha-\beta-1\right|\right)$$
$$\geq \max\left(\left\|c-c_j\right\| - \left|\alpha-\beta-1\right|,\left|\alpha-\beta-1\right|\right)$$
$$\geq \max\left(4\delta_c - \left|\alpha-\beta-1\right|,\left|\alpha-\beta-1\right|\right)$$
$$\geq 2\delta_c \geq \delta_cd_{in}(x,y)$$
This completes the proof of the $\delta_c^{-1}$-quasiconvexity of the parallelogram.

\noindent \emph{End Proof of Claim}.

\begin{figure}
\includegraphics[scale=1]{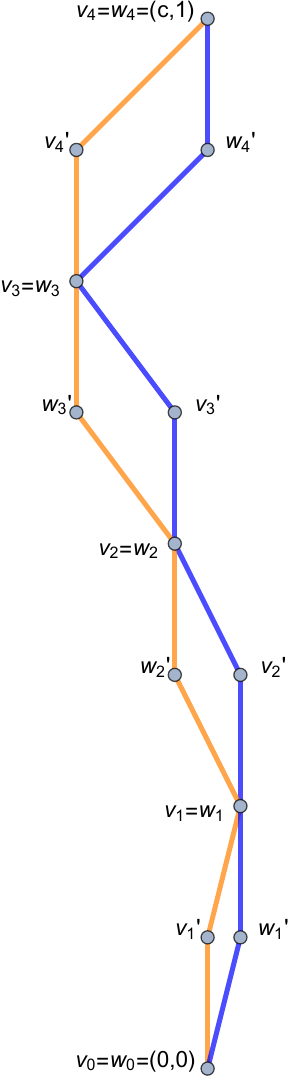}
\caption{The model graph $\Gamma(c)$ The geodesic path $\gamma_0(c)$ is shown in orange, and the geodesic path $\gamma_1(c)$ is shown in blue. The horizontal axis is in the ``$B$ direction" of $B \oplus \R$, and the vertical axis is in the ``$\R$ direction".}
\label{fig:modelgraph}
\end{figure}

Since $\gamma_0(c)$, $\gamma_1(c)$, and $[(0,0),(c,1)]$ are all geodesics with endpoints $(0,0)$ and $(c,1)$, there are unique isometries $\gamma_0(c) \to [(0,0),(c,1)]$ and $\gamma_1(c) \to [(0,0),(c,1)]$ fixing the endpoints. If we let $[(0,0),(c,1)]'$ denote the subdivision of $[(0,0),(c,1)]$ into subedges of length $2^{-(n_c+1)}$, the maps are graph isomorphisms. Combining these gives us a map $\pi_c: \Gamma(c) \to [(0,0),(c,1)]'$ which is open, simplicial, and an isometry on every edge. Furthermore, the preimage of any edge in $[(0,0),(c,1)]'$ consists of two edges of $\Gamma(c)$. Let $((0,0) = t_0, t_1', t_1, t_2', t_2, \dots t_{2^{n_c}}', t_{2^{n_c}} = (c,1))$ be the ordered vertex set of $[(0,0),(c,1)]'$. Then $(\pi_c^{-1})(\{t_j\}) = \{v_j\} = \{w_j\}$, a single vertex, and $(\pi_c^{-1})(\{t_j'\}) = \{v_j,w_j'\}$, a set of two vertices. Finally, if $j \neq 0, 2^{n_c}$, the vertex $v_j=w_j$ has degree four, so every point in $\Gamma(c)$ is at most two edge lengths away from a vertex of degree four. Thus, $\pi_c: \Gamma(c) \to [(0,0),(c,1)]'$ satisfies the conditions listed for $\pi_i^{i+1}$ in Axioms \ref{ax:gendiamsys2}, \ref{ax:gendiamsys3}, and \ref{ax:gendiamsys7}.

For any $\alpha_0 \in \R \setminus \{0\}$, $b_0 \in B$, and $A \sbs B$, we let $\alpha_0 A + b_0$ be the image of $A$ under the invertible similarity $b \mapsto \alpha_0 b + b_0$. $\alpha_0 \Gamma(c) + b_0$ and $(\alpha_0[(0,0),(c,1)] + b_0)'$ inherit graph structures from $\Gamma(c)$ and $[(0,0),(c,1)]'$, respectively, and there is also an induced map $\alpha_0 \pi_c + b_0: \alpha_0 \Gamma(c) + b_0 \to (\alpha_0[(0,0),(c,1)] + b_0)'$ that, like $\pi_c$, satisfies Axioms \ref{ax:gendiamsys2}, \ref{ax:gendiamsys3}, and \ref{ax:gendiamsys7}.

\subsubsection{Inductive Construction of $X_i$}
\label{sss:indconstr}
For the base case, let $X_0 = \{0\} \cross I \sbs B \oplus \R$. For the inductive hypothesis, assume that the inverse system $X_i \overset{\pi^i_{i-1}}{\to} X_{i-1} \dots \overset{\pi^1_0}{\to} X_0$ and $X_{i-1}'$ have been constructed and satisfy Axioms \ref{ax:gendiamsys1}-\ref{ax:gendiamsys7} from Definition \ref{def:gendiamsys}. For $e \in E(X_i)$, let $v_0(e)$ and $v_1(e)$ denote the terminal vertices of $e$. Assume that the inverse system satisfies the additional properties: 

\begin{enumerate}[label=(P\arabic*)]
\item \label{ax:addprop1} For all $e \in E(X_i)$, $e$ equals the line segment joining $v_0(e)$ to $v_1(e)$. That is, $e = [v_0(e),v_1(e)] := \{(1-t)v_0(e)+tv_1(e): t \in [0,1]\}$.
\item \label{ax:addprop2} For all $e \in E(X_i)$, $e$ is parallel to an associated vector $(c,1) \in C \cross \{1\}$. That is, $v_1(e) - v_0(e) = \alpha (c,1)$ for some $\alpha \in \R$ and $c \in C$. Furthermore, $\alpha = 2^{-n_i}$ for some $n_i \geq 1$. $n_i$ depends on $i$ but not on $e$. It follows that every edge of $X_i$ has length $2^{-n_i}$.
\end{enumerate}

Now we need to construct $X_{i+1}$, $X_i'$, and $\pi^{i+1}_i: X_{i+1} \to X_i'$. Let $e \in E(X_i)$, and $c \in C$ and $n_i \geq 1$ such that $v_1(e) - v_0(e) = 2^{-n_i} (c,1)$. Subdivide $e$ into into 3 subedges, the middle one having length $\frac{1}{2}|e|$, and the terminal ones having length $\frac{1}{4}|e|$. Let $e_0$ and $e_1$ denote the terminal subedges, and $e_{1/2}$ the middle subedge. Note that, for any $x \in e_{1/2}$ and $y \in X_i \setminus e$,
\begin{equation} \label{eq:mindist}
d_i(x,y) \geq \frac{|e|}{4} = 2^{-(n_i+2)}
\end{equation}

Let $\delta' = \frac{\delta+\delta_i}{2}$, so that $\delta < \delta' < \delta_i$. Choose $N$ to be large enough so that
\begin{equation} \label{eq:largesubdiv}
2^{-N} \leq (\delta_i-\delta')2^{-2}
\end{equation}
Subdivide $e_{1/2}$ into $2^N$ edges of equal length. So now $e$ is divided into a total of $2^N + 2$ subedges, and two of them, $e_0$ and $e_1$, are marked as terminal subedges. Doing this for every $e \in E(X_i)$ gives us a subdivison $\tilde{X}_i$ of $X_i$. Let $f$ be a subedge of $e_{1/2}$. Then $v_1(f)-v_0(f) = 2^{-(n_i+1+N)}(c,1)$. We create $X_{i+1}$ by replacing $f$ with the graph $2^{-(n_i+1+N)}\Gamma(c) + v_0(f)$, which has the same vertices as $f$. Thus, $X_{i+1}$ consists of the union of $e_0,e_1,2^{-(n_i+1+N)}\Gamma(c) + v_0(f)$ over all $f \sbs e_{1/2}$ and $e \in E(X_i)$, with each $e_0$ and $e_1$ subdivided into subedges so that every edge of $X_{i+1}$ has equal length. $X_{i+1}$ satisfies \ref{ax:addprop1} and \ref{ax:addprop2}. Since there are only finitely many $e \in E(X_i)$, and thus finitely many $c \in C$ associated to $e$, we may choose the subdivision parameter $n_c$ of Section \ref{sss:modelgraph} independent of $c$.

$X_i'$ is simply the subdivision of $\tilde{X}_i$ into subedges all having length the same as any edge of $X_{i+1}$. For any $e_0$, $e_1$, and $f \sbs e_{1/2}$, let $e_0'$, $e_1'$, and $f'$ denote the subdivisions in $X_i'$. Let $2^{-(n_i+1+N)}\pi_c + v_0(f): 2^{-(n_i+1+N)}\Gamma(c) + v_0(f) \to f'$ be the map defined in Section \ref{sss:modelgraph}. We paste all these maps along with all the identity maps $e_0 \to e_0'$, $e_1 \to e_1'$ together to obtain the quotient map $\pi_i^{i+1}: X_{i+1} \to X_i'$. Then $\pi_i^{i+1}$ satisfies Axioms \ref{ax:gendiamsys2}, \ref{ax:gendiamsys3}, and \ref{ax:gendiamsys7} because each map $2^{-(n_i+1+N)}\pi_c + v_0(f)$ does.

The map $\pi_i^{i+1}$ is a 1-Lipschitz quotient with respect to the metrics $d_{i+1}$ and $d_i$. Furthermore, it has the property that, if $x,y \in X_{i+1}$ and $\pi^{i+1}_i(x)$ and $\pi^{i+1}_i(y)$ do not belong to the same edge of $\tilde{X}_i$, then
\begin{equation} \label{eq:samedist}
d_{i+1}(x,y) = d_i(\pi^{i+1}_i(x),\pi^{i+1}_i(y))
\end{equation}

Set $\delta_{i+1} := \min_c(\delta_c,\delta') > \delta$, where the minimum is over each $(c,1)$ associated to an edge $e$ of $X_i$. Since there are only finitely many edges of $X_i$, the minimum is well-defined and $\delta_{i+1} > \delta$. We now check that $X_{i+1}$ is $\delta_{i+1}^{-1}$-quasiconvex.

Let $x,y \in X_{i+1}$. First consider the case $\pi_i^{i+1}(x)$ and $\pi_i^{i+1}(y)$ belong to the same edge $f$ of $\tilde{X}_i$, with $v_1(f) - v_0(f) = 2^{-n_i}(c,1)$ for some $c \in C$. Then $x$ and $y$ both belong to $2^{-(n_i+1+N)}\Gamma(c) + v_0(f)$, on which the intrinsic distance is $\delta_c^{-1}$-quasiconvex, so the desired conclusion holds in this case.

Now assume $\pi_i^{i+1}(x)$ and $\pi_i^{i+1}(y)$ do not belong to the same edge of $\tilde{X}_i$ but do belong to the same edge of $X_i$. Then the intrinsic and extrinsic distance between $x$ and $y$, and the intrinsic and extrinsic distance between $\pi_i^{i+1}(x)$ and $\pi_i^{i+1}(y)$ are all equal.

Finally, assume $\pi_i^{i+1}(x)$ and $\pi_i^{i+1}(y)$ do not belong to the same edge of $X_i$. We consider two subcases: both $x$ and $y$ belong to terminal subedges of $e,f \in E(X_i)$, or one does not belong to a terminal subedge. In the first case, if both $x$ and $y$ belong to terminal subedges of $X_i$, then $\pi^{i+1}_i$ acts identically, on $x$ and $y$, and so
$$d_{i+1}(x,y) = d_i(\pi_i^{i+1}(x),\pi_i^{i+1}(y)) \leq \delta_i^{-1}\|\pi_i^{i+1}(x)-\pi_i^{i+1}(y)\| = \|x-y\|$$
by the inductive hypothesis and so the conclusion holds. Now assume, without loss of generality, that $\pi_{i}^{i+1}(x) \in e_{1/2}$ for some $e \in E(X_i)$ and $y \in X_{i+1} \setminus (\pi^{i+1}_i)^{-1}(e)$. Then we get
\begin{equation} \label{eq:mindistapp}
d_i(\pi_i^{i+1}(x),\pi_i^{i+1}(y)) \geq \|\pi_i^{i+1}(x)-\pi_i^{i+1}(y)\| \geq \delta_i d_i(x,y) \overset{\eqref{eq:mindist}}{\geq} \delta_i\frac{|e|}{4} = \delta_i 2^{-(n_i+2)}
\end{equation}
Since $\pi_i^{i+1}$ acts identically on the vertices of $\tilde{X}_i$, the $d_{i+1}$ diameter of any fiber of $\pi_i^{i+1}$ is at most the length of an edge of $\tilde{X}_i$, which is $2^{-(n_i+1+N)}$. This implies
\begin{equation} \label{eq:fiberdiam2}
\|\pi_i^{i+1}(x) - x\|, \|\pi_i^{i+1}(y) - y\| \leq 2^{-(n_i+1+N)}
\end{equation}
Thus,
$$\|x-y\| \geq \|\pi_i^{i+1}(x) - \pi_i^{i+1}(y)\| - \|\pi_i^{i+1}(x) - x\| - \|y - \pi_i^{i+1}(y)\|$$
$$\overset{\eqref{eq:fiberdiam2}}{\geq} \delta_id_i(\pi_i^{i+1}(x),\pi_i^{i+1}(y)) - 2^{-(n_i+N)}$$
$$\overset{\eqref{eq:largesubdiv}}{\geq} \delta_id_i(\pi_i^{i+1}(x),\pi_i^{i+1}(y)) - (\delta_i-\delta')2^{-(n_i+2)}$$
$$\overset{\eqref{eq:mindistapp}}{\geq} \delta_id_i(\pi_i^{i+1}(x),\pi_i^{i+1}(y)) - (\delta_i-\delta')d_i(\pi_i^{i+1}(x),\pi_i^{i+1}(y))$$
$$= \delta'd_i(\pi_i^{i+1}(x),\pi_i^{i+1}(y)) \overset{\eqref{eq:samedist}}{=} \delta'd_{i+1}(x,y) \geq \delta_{i+1}d_{i+1}(x,y)$$
\end{proof}

\section{Questions}
\label{sec:questions}
As far as we are aware the following questions remain open. A positive answer to (Q1) implies a positive answer to (Q2), a positive answer to (Q2) implies a positive answer to (Q3) (assuming the metric space is separable), and a positive answer to (Q3) implies positive answers to (Q4) and (Q5). In the following, we always mean ``complete metric space(s)" when we say "metric space(s)" (there are easy counterexamples if completeness is not assumed).

\begin{enumerate}[label=(Q\arabic*)]
\item \label{Q:1} Is the differentiation nonembeddability criterion a necessary condition for the non-biLipschitz embeddability of metric spaces into RNP spaces?
\item \label{Q:2} Is some weak form of the differentiation nonembeddability criterion, such as that of Theorem \ref{thm:mainthmssummary}, a necessary condition for the non-biLipschitz embeddability of metric spaces into RNP spaces?
\item \label{Q:3} Are the only obstructions to biLipschitz embeddability of metric spaces into RNP spaces local?
\item \label{Q:4} Does every discrete metric space biLipschitz embed into some RNP space?
\item \label{Q:5} Is there a universal constant $C$, such that if a metric space biLipschitz embeds into an RNP space, then it $C$-biLipschitz embeds into an (possibly larger) RNP space?
\end{enumerate}

Technically, the differentiation nonembeddability criterion is for metric measure spaces, not metric spaces, so we need to be more specific about \ref{Q:1} (and similarly for \ref{Q:2}): if a complete metric space $M$ does not embed into any RNP space, does there exist a Borel measure $\mu$ on $M$ so that $(M,\mu)$ satisfies the differentiation nonembeddability criterion defined in Section \ref{ss:histback}?

\ref{Q:3} can be stated more specifically as: if every point in a metric space has a neighborhood that biLipschitz embeds into an RNP space, does the entire metric space embed into an RNP space?

That \ref{Q:1} implies \ref{Q:2} and \ref{Q:3} implies \ref{Q:4} are immediate.

To see that \ref{Q:1} implies \ref{Q:3}, suppose we have a separable metric space that locally embeds into RNP spaces. By using a partition of unity type argument, and by taking a countable $\ell_1$-direct sum of the RNP spaces, we can find a globally defined Lipschitz map into a single RNP space that is locally biLipschitz. Then the blowup of this map at any point where it exists must also be biLipschitz, and so no differentiation criterion can hold. Thus, by \ref{Q:1}, the entire metric space must biLipschitz embed into some RNP space.

The statement of \ref{Q:2} is not specific enough to prove that \ref{Q:2} implies \ref{Q:3}, but for any reasonable notion of differentiability, the same argument of \ref{Q:1} implies \ref{Q:3} should work.

Now we show that \ref{Q:3} implies \ref{Q:5}. We first need the following result: there is a constant $C$ such that, for any metric space $X$, if every bounded subset of $X$ biLipschitz embeds into an RNP space with distortion at most $D$, then the entire space embeds into an RNP space with distortion at most $CD$. To see this, pick a base point $0 \in X$, RNP spaces $B_n$, and $D$-biLipschitz embeddings $\phi_n: B_{2^n}(0) \to B_n$. Define a new map $\phi: X \to (\oplus B_n)_1 \oplus \R$ (note that the target has RNP) by $\phi(x) = \left( \frac{d(x,0)-2^{i-1}}{2^{i-1}}\phi_{i-1}(x) + \frac{2^{i}-d(0,x)}{2^{i-1}}\phi_i(x),d(0,x) \right)$ if $2^{i-1} \leq d(0,x) \leq 2^i$. It is shown in the proof of Theorem 1.2 from \cite{Os12} that there is a constant $C$ such that $\phi$ is $CD$-biLipschitz. This ``gluing" procedure was also used earlier in the proof of Theorem 1.1 of \cite{Bau07}. Now assume \ref{Q:5} is false. For each $N \in \mathbb{N}$, pick a metric space $X_N$ that biLipschitz embeds into some RNP space but with distortion never less than $N$. By the previous discussion, there is a universal constant $c$ such that if every bounded subset of $X_N$ biLipschitz embeds into an RNP space with distortion less than $cN$, then the entire space would biLipschitz embed into an RNP space with distortion at most $N$, a contradiction. Thus, there is a bounded subset $Y_N$ of $X_N$ that biLipschitz embeds into an RNP space, but never with constant less than $cN$. Taking a metric disjoint union of these bounded spaces $\{Y_N\}_{N=1}^\infty$ yields a space failing \ref{Q:3}.



\bibliographystyle{amsalpha}
\bibliography{thickfamilypaperbib}

\providecommand{\bysame}{\leavevmode\hbox to3em{\hrulefill}\thinspace}
\providecommand{\MR}{\relax\ifhmode\unskip\space\fi MR }
\providecommand{\MRhref}[2]{%
  \href{http://www.ams.org/mathscinet-getitem?mr=#1}{#2}
}
\providecommand{\href}[2]{#2}
\begin{thebibliography}{Ost14b}

\bibitem[Aro76]{Ar76}
N.~Aronszajn, \emph{Differentiability of {L}ipschitzian mappings between
  {B}anach spaces}, Studia Math. \textbf{57} (1976), no.~2, 147--190.
  \MR{0425608}

\bibitem[Bat15]{Bat15}
David Bate, \emph{Structure of measures in {L}ipschitz differentiability
  spaces}, J. Amer. Math. Soc. \textbf{28} (2015), no.~2, 421--482.
  \MR{3300699}

\bibitem[Bau07]{Bau07}
Florent Baudier, \emph{Metrical characterization of super-reflexivity and
  linear type of {B}anach spaces}, Arch. Math. (Basel) \textbf{89} (2007),
  no.~5, 419--429. \MR{2363693}

\bibitem[BL00]{BL00}
Yoav Benyamini and Joram Lindenstrauss, \emph{Geometric nonlinear functional
  analysis. {V}ol. 1}, American Mathematical Society Colloquium Publications,
  vol.~48, American Mathematical Society, Providence, RI, 2000. \MR{1727673}

\bibitem[Che99]{Ch99}
J.~Cheeger, \emph{Differentiability of {L}ipschitz functions on metric measure
  spaces}, Geom. Funct. Anal. \textbf{9} (1999), no.~3, 428--517. \MR{1708448}

\bibitem[Chr73]{Chr73}
Jens Peter~Reus Christensen, \emph{Measure theoretic zero sets in infinite
  dimensional spaces and applications to differentiability of {L}ipschitz
  mappings}, Publ. D\'{e}p. Math. (Lyon) \textbf{10} (1973), no.~2, 29--39,
  Actes du Deuxi\`eme Colloque d'Analyse Fonctionnelle de Bordeaux (Univ.
  Bordeaux, 1973), I, pp. 29--39. \MR{0361770}

\bibitem[CK09]{CK09}
Jeff Cheeger and Bruce Kleiner, \emph{Differentiability of {L}ipschitz maps
  from metric measure spaces to {B}anach spaces with the {R}adon-{N}ikod\'{y}m
  property}, Geom. Funct. Anal. \textbf{19} (2009), no.~4, 1017--1028.
  \MR{2570313}

\bibitem[CK15]{CK15}
\bysame, \emph{Inverse limit spaces satisfying a {P}oincar\'{e} inequality},
  Anal. Geom. Metr. Spaces \textbf{3} (2015), 15--39. \MR{3300718}

\bibitem[Laa00]{La00}
T.~J. Laakso, \emph{Ahlfors {$Q$}-regular spaces with arbitrary {$Q>1$}
  admitting weak {P}oincar\'{e} inequality}, Geom. Funct. Anal. \textbf{10}
  (2000), no.~1, 111--123. \MR{1748917}

\bibitem[Li14]{Li14}
Sean Li, \emph{Coarse differentiation and quantitative nonembeddability for
  {C}arnot groups}, J. Funct. Anal. \textbf{266} (2014), no.~7, 4616--4704.
  \MR{3170215}

\bibitem[LP01]{LP01}
Urs Lang and Conrad Plaut, \emph{Bilipschitz embeddings of metric spaces into
  space forms}, Geom. Dedicata \textbf{87} (2001), no.~1-3, 285--307.
  \MR{1866853}

\bibitem[Man73]{Ma73}
Piotr Mankiewicz, \emph{On the differentiability of {L}ipschitz mappings in
  {F}r\'{e}chet spaces}, Studia Math. \textbf{45} (1973), 15--29. \MR{0331055}

\bibitem[MN13]{MN13}
Manor Mendel and Assaf Naor, \emph{Markov convexity and local rigidity of
  distorted metrics}, J. Eur. Math. Soc. (JEMS) \textbf{15} (2013), no.~1,
  287--337. \MR{2998836}

\bibitem[Ost12]{Os12}
M.~I. Ostrovskii, \emph{Embeddability of locally finite metric spaces into
  {B}anach spaces is finitely determined}, Proc. Amer. Math. Soc. \textbf{140}
  (2012), no.~8, 2721--2730. \MR{2910760}

\bibitem[Ost14a]{Os14b}
Mikhail Ostrovskii, \emph{Radon-{N}ikod\'{y}m property and thick families of
  geodesics}, J. Math. Anal. Appl. \textbf{409} (2014), no.~2, 906--910.
  \MR{3103207}

\bibitem[Ost14b]{Os14c}
Mikhail~I. Ostrovskii, \emph{Metric spaces nonembeddable into {B}anach spaces
  with the {R}adon-{N}ikod\'{y}m property and thick families of geodesics},
  Fund. Math. \textbf{227} (2014), no.~1, 85--96. \MR{3247034}

\bibitem[Ost14c]{Os14a}
\bysame, \emph{On metric characterizations of the {R}adon-{N}ikod\'{y}m and
  related properties of {B}anach spaces}, J. Topol. Anal. \textbf{6} (2014),
  no.~3, 441--464. \MR{3217866}

\bibitem[Pis16]{Pi16}
Gilles Pisier, \emph{Martingales in {B}anach spaces}, Cambridge Studies in
  Advanced Mathematics, vol. 155, Cambridge University Press, Cambridge, 2016.
  \MR{3617459}

\bibitem[Sch14]{Sch14}
Andrea Schioppa, \emph{Derivations and {A}lberti representations}, ProQuest
  LLC, Ann Arbor, MI, 2014, Thesis (Ph.D.)--New York University. \MR{3279153}

\end{thebibliography}

\end{document}